\DeclareMathOperator{\eldiag}{eldiag}
\newenvironment{ex}
{
 {\bf Example}
}
{
\vspace{.1in}
 }
\theoremstyle{definition}
\newtheorem{thm}{Theorem} 
\newtheorem{prop}{Proposition}[section] 
\newtheorem{llama}[prop]{Lemma} 
\newtheorem{lem}[prop]{Lemma}
\newtheorem{deff}{Definition}
\newtheorem{fact}[prop]{Fact}
\newtheorem*{claim}{Claim}
\newtheorem{notate}[prop]{Notation}
\newtheorem*{quest}{Question}
\newtheorem{obs}[prop]{Observation}
\newtheorem{cory}[prop]{Corollary}
\newtheorem{conje}{Conjecture}
\newcommand{\sthat}{\hspace{.1cm}| \hspace{.1cm}}
\newcommand{\C}{\mathcal{C}}
\newcommand{\acl}{\operatorname{acl}}
\newcommand{\im}{\operatorname{im}}
\newcommand{\zz}{\mathbb{Z}}
\newcommand{\dM}{\operatorname{dM}}
\newcommand{\RM}{\operatorname{RM}}
\newcommand{\Th}{\operatorname{Th}}
\newcommand{\donee}{$\Delta_1$-definitional expansion }
\newcommand{\donei}{$\Delta_1$-interdefinable }
\newcommand{\spec}{\operatorname{SRM}}
\newcommand{\Spec}{\spec}
\newcommand{\nbh}{\operatorname{Nbh}}
\newcommand{\Nbh}{\nbh}
\begin{document}
\title{Recursive Spectra of Strongly Minimal Theories Satisfying the Zilber Trichotomy}
\author{Uri Andrews and Alice Medvedev}
\thanks{The second author is partially supported by NSF FRG DMS-0854998 grant.}



\begin{abstract}
 We conjecture that for a strongly minimal theory $T$ in a finite signature satisfying the Zilber Trichotomy, there are only three possibilities for the recursive spectrum of $T$: all countable models of $T$ are recursively presentable; none of them are recursively presentable; or only the zero-dimensional model of $T$ is recursively presentable. We prove this conjecture for disintegrated (formerly, trivial) theories and for modular groups. The conjecture also holds via known results for fields. The conjecture remains open for finite covers of groups and fields. 
\end{abstract}

\maketitle

\section{Introduction}
\subsection{History and Definitions}
The following is a central question of recursive model theory.
\begin{quest}
Which (countable) models of a fixed theory $T$ are recursion-theoretically uncomplicated?
\end{quest}

 For us, ``uncomplicated'' means recursively presentable.

\begin{deff}
  A denumerable structure is \emph{recursive} if it has universe $\omega$ and the atomic diagram of the structure is recursive.\\
 A structure is \emph{recursively presentable} if it is isomorphic to a recursive structure.
\end{deff}

The \emph{spectrum problem}, first considered by Goncharov \cite{gonch} in 1978, asks which (isomorphism classes of) models of $T$ are recursively presentable; of course, such models must be countable. Similarly, our \textbf{languages are countable}.

Before tackling the spectrum problem for a theory $T$, one needs a complete list of countable models of $T$, indexed by something. \emph{Strongly minimal} theories are a natural test case, both because their models usually admit thorough, mathematically natural descriptions, and because countable models of any strongly minimal theory form an elementary chain $M_0\prec M_1 \prec M_2 \prec \ldots \prec M_{\omega}$, unless the theory is $\aleph_0$-categorical \cite{BaldwinLachlan}. This happens because each model is completely characterized by its algebraic dimension. It may happen that $T$ has no models of small dimension, so in general the dimension of $M_i$ is $d+i$, where $d$ is the dimension of the prime model of $T$. We say that $i$ is the \emph{dimension of $M_i$ over the prime model}. Now the spectrum problem for a strongly minimal theory $T$ comes down to understanding the \emph{spectrum} of $T$, defined to be
 $$\Spec(T)=\{i\sthat M_i \text{ is recursively presentable}\}$$

Since the spectrum of a strongly minimal theory is a subset of $\omega +1$, one can further ask about its recursion-theoretic complexity.
It is easy enough to come up with strongly minimal theories all of whose models are recursively presentable; and with theories that have no recursively presentable models at all. Goncharov \cite{gonch} first showed in 1978 that some strongly minimal theories have more interesting (than nothing or everything) spectra. It is possible to complicate the spectrum of a theory (even the theory of pure equality) by adding infinitely many new non-logical symbols and exploiting non-uniformity of their interpretations, all without changing the class of definable (with parameters) sets and, a fortiori, the underlying geometry.
To exclude such constructions, with the exception of the results in section \ref{cories}, this paper is exclusively about theories in \textbf{finite signatures}. Though the only known general bounds for the spectrum problem are non-arithmetical, the only known examples are computationally quite simple. Indeed, Herwig, Lempp, and Ziegler have to work quite hard in \cite{HLZ} to find an example of a strongly minimal theory in finite signature whose spectrum $\{0\}$ is neither everything nor nothing. In \cite{finlanguage} and \cite{0omega}, the first author showed that $\{0,\ldots, n\}$, $\omega$, and $\{\omega\}$ are spectra of theories in finite signatures by using a Hrushovski amalgamation construction. These constructions are well suited to building recursion-theoretically interesting strongly minimal structures, but, not surprisingly, these structures have little to do with the ones arising naturally in mathematics. Hrushovski \cite{Udi} invented the construction to provide a counterexample to Zilber's Trichotomy Conjecture. This conjecture, which is true for all strongly minimal theories that occur in nature, asserts that strongly minimal theories come in three flavors: \emph{disintegrated} (also known as ``trivial''); locally modular (also known as grouplike); and fieldlike. Though the precise definitions of ``fieldlike'' vary considerably in the literature, we take fieldlike to mean that the theory interprets a pure field, thus excluding Hrushovski fusions. Model theorists have a good understanding of the structure of models of strongly minimal theories satisfying the Zilber Trichotomy, leading us to conjecture that their spectra are quite simple.
For an introduction to the structure of models of strongly minimal theories, we refer the reader to \cite{marker}.

\subsection{Forecasts and Conjectures}

\begin{conje}\label{mainconj}
If $T$ is a strongly minimal theory in a finite signature satisfying the Zilber Trichotomy, then $\spec(T)$ is $\emptyset, \omega+1, \text{ or }\{0\}$.
\end{conje}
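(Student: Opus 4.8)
The plan is to reduce the conjecture, via the Zilber Trichotomy, to two transfer principles about the dimension parameter, and then to verify those principles in each flavor of the trichotomy. The reduction rests on isolating two statements: \emph{downward closure} (A), that whenever $M_i$ is recursively presentable and $j<i\le\omega$ then $M_j$ is recursively presentable; and \emph{the jump} (B), that if $M_1$ is recursively presentable then so is $M_\omega$. Granting these, the conjecture follows by a short lattice argument on the linear order $\omega+1$. Indeed, (A) forces $\Spec(T)$ to be an initial segment of $\omega+1$, hence one of $\emptyset$, $\{0,\dots,k\}$, $\omega$, or $\omega+1$; and (B) kills every initial segment that contains $1$ yet omits the top point, leaving only $\emptyset$, $\{0\}$, and $\omega+1$. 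Both (A) and (B) genuinely require the trichotomy hypothesis: the Hrushovski-amalgamation theories of \cite{finlanguage} and \cite{0omega} realize $\omega$ and $\{\omega\}$ precisely because they violate it, so the entire content is to show that trichotomy structure restores downward closure and forces the finite-to-limit jump.

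Next I would dispatch the three pure cases. In the \emph{fieldlike} case $T$ interprets a pure field, and here transcendence degree is the only invariant, every model of ACF is recursively presentable outright, and the prime field is recursive; this yields $\Spec(T)=\omega+1$ directly, so (A) and (B) hold trivially. In the \emph{modular group} case the models are essentially modules, and their extreme homogeneity makes a single generic extension $M_i\prec M_{i+1}$ a uniform, reversible operation on a recursive presentation, so (A) and (B) reduce to building an effective elementary chain $M_0\prec M_1\prec\cdots$ and observing that its union is still recursive. The \emph{disintegrated} case is where $\{0\}$ and $\emptyset$ actually occur (for instance the spectrum $\{0\}$ of \cite{HLZ}): disintegration means $\acl$ is the union of the algebraic closures of singletons, so $M_{i+1}$ is obtained from $M_i$ by freely adjoining one new independent $\acl$-component. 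This freeness is exactly what makes the generic extension uniform enough to iterate and pass to the limit, giving (B), while letting one peel off a component gives (A); the resulting spectrum is then $\emptyset$, $\{0\}$, or $\omega+1$ according to whether $M_0$ and $M_1$ are recursively presentable.

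The step I expect to be the genuine obstacle is the remaining case of \emph{finite covers} of modular groups and fields. Here $T$ carries a definable finite-to-one map $\pi\colon M\to C$ onto a grouplike or fieldlike base $C$, and although $C$ is controlled by the previous paragraph, recursively presenting the cover requires coherently choosing the finite fiber data over a recursive copy of $C$. The automorphism group of the cover over its base is profinite, and gluing compatible fibers across an effectively generated model has the shape of an effective-coherence (cohomological) problem: I do not see a uniform way to guarantee that an effective presentation of $C$ lifts to one of $M$, nor that such a lift survives the passage of the dimension to $\omega$. Thus, while I would attempt (A) and (B) for covers by analyzing this binding-group data and trying to make the fiber choices recursive, I expect precisely this coherence step to resist a general argument, which is why the conjecture should be expected to remain open exactly for finite covers.
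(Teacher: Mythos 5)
Your high-level architecture is close in spirit to the paper's: the paper also reduces the conjecture to a single transfer principle --- if some positive-dimensional model is recursive then \emph{all} models are recursively presentable (Conjecture \ref{conjtoprove}) --- which subsumes your (A) and (B), and it likewise splits into disintegrated, modular-group, and fieldlike cases, leaving finite covers open. But your case analyses assert the conclusions rather than prove them, and the assertions hide exactly the content of the paper. In the modular group case, the claim that a generic extension $M_i\prec M_{i+1}$ is a ``uniform, reversible operation on a recursive presentation'' presupposes that from one recursive positive-dimensional model you can effectively recover the quasiendomorphism ring $R$ and the prime model; that is the entire difficulty, and it occupies Sections 3--4 (the $\Delta_1$-reduction to an abelian structure, the word calculus deciding which words in the generating quasiendomorphisms represent constants, and the row-reduction showing every quasiendomorphism is represented by such a word). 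Indeed the paper observes that a modular group with spectrum $\{0\}$ would have to have an unsolvable word problem for its quasiendomorphism ring, so ``reversibility'' genuinely fails between dimensions $0$ and $1$; the extraction works only from a model of positive dimension, because one needs a generic element against which to test words. Similarly, in the disintegrated case, ``freely adjoining one new independent $\acl$-component'' is not an effective operation until you can recursively identify the isomorphism type of the generic component and enumerate $\acl(\emptyset)$; this requires first passing to a $\Delta_1$-interdefinable signature with only rank $\leq 1$ relations, which in turn rests on model completeness after naming a prime model \cite{GHLLM}, and then the neighborhood analysis of Proposition \ref{trivialkey}. None of this is supplied by ``freeness'' alone.

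Two further points. Your fieldlike case is stated too strongly: the trichotomy hypothesis gives only that $T$ \emph{interprets} a pure field, not that $T$ is interdefinable with ACF, so ``transcendence degree is the only invariant'' does not apply to $T$ itself. The easy argument (via Poizat's theorem that a strongly minimal expansion of a field is a definitional expansion) covers only the case where $T$ \emph{expands} a field; fieldlike theories without a classical Zariski geometry fall into the open finite-cover case and cannot be dispatched this way. Finally, since you candidly concede the finite-cover case, your proposal is at best an outline of the two cases the paper actually proves --- and as written neither of those is established either, because in each the key effectivity lemma is assumed rather than proved.
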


We prove this conjecture for disintegrated strongly minimal theories in Section 2, and for those locally modular strongly minimal theories which expand a group in Sections 3 and 4. The case of fieldlike fields is even easier: any (fieldlike) strongly minimal theory which expands a field is a definitional expansion \cite{Poizat}.
Such theories in finite signatures are decidable, so all of their models are recursively (even decidably) presentable \cite{leo}\cite{Khisamiev}.

\subsubsection{Finite covers}
Grouplike strongly minimal structures which are not groups arise via finite covers from groups, and similarly for fieldlike. These are the strongly minimal theories satisfying the Zilber trichotomy for which our conjecture still needs to be verified.

If $M$ is any non-disintegrated modular strongly minimal structure in finite signature, then by the group configuration theorem (\cite{Buechler}, p. 193 Theorem 4.5.2) $M$ interprets a modular strongly minimal group. Moreover, a careful reading of the proof of the group configuration theorem shows that there is a modular strongly minimal group which is $\Delta_1$-interpreted in $M$ (see definition below). This leaves open the question of spectra of finite covers of modular strongly minimal groups where the covering map is recursive.

A field-like strongly minimal theory is one which interprets a pure algebraically closed field. If in addition to interpreting an algebraically closed field, $M$ is also interpreted in an algebraically closed field, then we say that $M$ has a classical Zariski geometry. If $M$ has a classical Zariski geometry and $M$ has finite signature, then the theory of $M$ is recursive in the theory of the algebraically closed field, so is recursive. If $T$ is strongly minimal and recursive, then all models of $T$ are decidably presentable \cite{leo}\cite{Khisamiev}, so $\Spec(T)=\omega+1$. Not all field-like strongly minimal theories have classical Zariski geometries (\cite{HruZil}, Theorem C), and for such theories, the spectrum problem remains open.

\subsubsection{Spectrum $\{0\}$}
For disintegrated theories, all three possibilities in our conjecture are known to be possible, as the theory built by Herwig, Lempp, and Ziegler \cite{HLZ} is disintegrated.
It is easy to construct examples of modular strongly minimal groups with spectra $\omega+1$ and $\emptyset$, but we do not know whether there is a modular strongly minimal group in finite signature with spectrum $\{0\}$. It follows from our work that in any such example, the prime model must be of dimension 0, and the 
word problem for the (finitely generated) quasiendomorphism ring must not be solvable.
Macintyre \cite{Angus} constructed such division rings by embedding the word problem for a finitely generated group into the multiplicative group of a division ring, but his construction cannot be used directly for our purposes. In his division rings, the set of words representing the identity is $\Sigma_1$, while any recursive model gives a $\Pi_1$ criterion for words in the multiplicative group, making the set of multiplicative words representing the identity recursive. Indeed, the multiplicative word $w$ represents the identity if and only if $M \models (a,a)\in w$ for all $a$, which is $\Pi_1$ since the size of the image of an element under $w$ is uniformly recursive in the multiplicative word $w$.

\vspace{.2cm}

In the rest of this paper, we work with the following less transparent but stronger version of Conjecture \ref{mainconj}.
\begin{conje}\label{conjtoprove}
If $T$ is a strongly minimal theory in a finite signature satisfying the Zilber Trichotomy, and some positive dimensional $M \models T$ is recursive, then all models of $T$ are recursively presentable.
\end{conje}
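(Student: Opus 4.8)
The plan is to invoke the Zilber Trichotomy to split $T$ into its three possible geometries and argue separately in each, after reducing to a common normal form. By Baldwin--Lachlan the countable models of $T$ form an elementary chain $M_0 \prec M_1 \prec \cdots \prec M_\omega$ indexed by dimension over the prime model, so the hypothesis supplies a recursive $M_i$ with $i \geq 1$, and the goal is to produce a recursive presentation of every $M_j$, including $M_\omega$. I would organize the construction into ``going down'' (presenting $M_j$ for $j < i$) and ``going up'' (presenting $M_j$ for $i < j \leq \omega$). Going down reduces to locating, inside the recursive $M_i$, the prime model together with $j$ independent generics and then presenting the algebraic closure of that configuration; going up requires building fresh independent generics onto a model while keeping the atomic diagram recursive.

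The fieldlike case needs no real work. If $T$ expands an algebraically closed field, then by Poizat it is a definitional expansion of that field, so in a finite signature $T$ is decidable; and more generally, if $M$ has a classical Zariski geometry then $\Th(M)$ is recursive in the theory of the interpreted field and hence recursive. Either way every model is decidably presentable and $\Spec(T) = \omega + 1$, irrespective of the hypothesis, while the non-classical fieldlike geometries fall outside what this method reaches. Thus the mathematical content lies in the disintegrated and modular-group cases.

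In the disintegrated case I would exploit that $\acl$ is trivial, so that each $M_j$ decomposes over the prime model into the neighborhoods $\Nbh(a)$ --- the structure induced on $\acl(a)$ --- of its independent generics, and the isomorphism type of $M_j$ is determined by the multiset of these neighborhood types together with $j$. From recursive $M_i$ one extracts a recursive presentation of a generic neighborhood and then glues $j$ (or $\omega$) independent copies of it onto the prime model; the work is to carry out this gluing recursively and uniformly, which amounts to deciding interalgebraicity and effectively classifying neighborhood types. For modular groups I would replace neighborhoods by the group itself: such a group is controlled by its finitely generated quasiendomorphism ring, and both projecting to lower dimension and adjoining generic group elements become effective exactly when one can compute in that ring.

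The hard part will be ``going up'': recursively adjoining a fresh generic while preserving decidability of the atomic diagram forces one to decide the type of the new element over the existing finite structure, and this is where finiteness of the signature and the geometry (trivial $\acl$, or effective arithmetic in the quasiendomorphism ring) must be brought to bear. In the modular-group case this obstruction is genuinely arithmetic --- it is governed by the solvability of the word problem for the quasiendomorphism ring --- which is precisely why the finite-cover instances of the conjecture, for covers of modular groups and of non-classical fieldlike geometries, remain open.
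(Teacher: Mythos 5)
Your outline reproduces the paper's top-level strategy---split by the trichotomy, dispose of the fieldlike case via Poizat and decidability, and in the remaining cases extract building blocks from the recursive positive-dimensional model and reassemble every model from them---but it has genuine gaps: essentially all of the content that makes this strategy \emph{effective} is asserted rather than supplied. First, you never say how one decides, inside the recursive $M_i$, the relations the construction needs (interalgebraicity in the disintegrated case, membership in definable subgroups in the modular case); a recursive atomic diagram only decides atomic formulae. The paper spends most of its length replacing the given signature by a $\Delta_1$-interdefinable one in which every relation symbol has Morley rank at most $1$ (disintegrated case, using the Goncharov--Harizanov--Laskowski--Lempp--McCoy model-completeness theorem to justify $\Delta_1$-interdefinability) or defines a connected strongly minimal subgroup of $M^n$ (modular case, via a reduction of the whole theory to an abelian structure using the coset structure of definable sets). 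Without such a normalization, ``deciding interalgebraicity and effectively classifying neighborhood types'' is exactly the point at issue, not a routine step. Second, in the modular case you treat computability in the quasiendomorphism ring as a hypothesis governing when the construction succeeds, whereas the paper \emph{derives} it from the hypothesis of the conjecture: the ring is generated by the binary projections of the finitely many relation symbols (a nontrivial row-reduction argument shows every quasiendomorphism is a word in these generators), and whether a word $w$ is the zero quasiendomorphism is decided by testing it against a fixed generic $t$ in the recursive positive-dimensional model, since both $(t,0)\in w$ and $t\in\im(w)$ are $\Sigma_1$ there. This is precisely where positive dimensionality enters; the word problem is therefore \emph{not} an obstruction under the hypothesis of this conjecture, only to passing upward from a recursive prime model (i.e., to ruling out spectrum $\{0\}$).

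Relatedly, your ``going up is the hard part'' framing misplaces the difficulty: the paper never adjoins a generic to an existing recursive model. It assembles each model from scratch, as $\acl(\emptyset)$ together with $d$ disjoint copies of the generic neighborhood in the disintegrated case, or as $M_0\oplus (K/Z)^d$ in the modular case (using Blossier--Bouscaren's decomposition of an abelian structure as prime model plus a vector space over the quasiendomorphism ring), so ``up'' and ``down'' are the same uniform construction. Finally, keep in mind that the statement is a conjecture which the paper itself proves only for disintegrated theories, modular groups, and classical fieldlike geometries; you correctly note that the finite-cover cases remain open, so no argument along these lines establishes the statement in full generality.
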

Conjecture \ref{conjtoprove} implies Conjecture \ref{mainconj} since $\emptyset$, $\{0\}$, and $\omega+1$ are the only subsets of $\omega+1$ with the following property: if there is a $k\in S$ with $k>0$, then $S=\omega+1$.

 We prove Conjecture \ref{conjtoprove} for disintegrated strongly minimal theories at the end of section 2, and for strongly minimal modular groups at the end of the section 4. In both cases, we begin by passing to a different signature, in which the theory satisfies extra technical hypotheses. $\Delta_1$-interdefinability, described in the last part of this section, ensures that the recursion-theoretic properties are not affected by this change.

\subsection{Strongly minimal structures.}

Here we summarize some basic facts about strongly minimal structures;
proofs and details can be found in Sections 6.1-6.3 and 8.1-8.3 of
\cite{marker}.

 A theory $T$ is \emph{strongly minimal} if all of its models are
infinite, and every definable subset of every model of $T$ is finite
or cofinite. This property, satisfied by regular graphs of finite valence, by vector spaces, and by algebraically closed fields, has
surprisingly strong geometric consequences.

 The model-theoretic algebraic closure operator $\operatorname{acl}$
satisfies Steinitz exchange in models of strongly minimal theories,
making it a combinatorial pregeometry and giving rise to a notion of
independence for elements of the model and a notion of dimension,
called Morley rank, for definable sets and types.

 In algebraically closed fields, $\operatorname{acl}$ is the
field-theoretic algebraic closure, independence is algebraic
independence, and the dimension of a type is the transcendence degree
of a realization over the parameter set.
 In vector spaces, $\operatorname{acl}$ is linear span, independence
is linear independence, and the dimension of a type is the linear
dimension of a realization over the parameter set.

 A definable set $S$ of Morley rank $n$ has \emph{Morley degree $1$}
if it cannot be written as a union of two disjoint definable subsets
of Morley rank $n$. Every definable set of Morley rank $n$ can be
written as a finite union of $m$ definable sets of Morley rank $n$ and
Morley degree $1$; this $m$ is called the \emph{Morley degree} of $S$.
Courtesy of the French, we write $\operatorname{RM}(S)$ and
$\operatorname{dM}(S)$ for the Morley rank and degree of $S$.

 The Morley rank of a type is the minimum of Morley ranks of the
formulae occurring in it. 
Every definable set of Morley
rank $n$ and Morley degree $1$ has a unique extension to a complete
type of Morley rank $n$, called the \emph{generic type} of that
definable set; any realization of this type is a \emph{generic
realization} of the definable set.

 A strongly minimal structure $M$ is \emph{disintegrated} (formerly
known as trivial) if the associated pregeometry is trivial in the
sense that $\operatorname{acl}(A) = \bigcup \{ \operatorname{acl}(a)
\sthat a \in A \}$ for any set $A$. It follows that the relation $a \sim b$
defined by $a \in \left(\operatorname{acl}(b) \setminus
\operatorname{acl}(\emptyset)\right)$ is an equivalence relation on
non-algebraic singletons.

 Clearly, no group is disintegrated, since the relation $ab=c$ is not
essentially binary. When the lattice of algebraically closed subsets
of the strongly minimal structure is modular in the sense of lattice
theory, the strongly minimal set itself is called \emph{modular}. This
technical combinatorial property generalizes disintegration; it may be
enjoyed by a group $G$, in which case all definable subsets of $G^n$
for all $n$ are boolean combinations of subgroups definable without
parameters. See \cite{hrupilomo} or \cite[Fact 4.5]{BnB} for details. For groups, modularity
is equivalent to the more general local modularity.

\subsection{Strongly minimal modular groups}

Here, we collect some basic facts about strongly minimal modular groups; see \cite{marker} for a more thorough introduction.

Any strongly minimal group $G$ is abelian, so we write $+$ for the group operation. Further, $G_0 := \acl(\emptyset)$ is a subgroup of $G$, and the quotient $G/G_0$ is a vector space over a division ring $R$. This division ring is already present in the definable structure on $G$, and only depends on the theory of $G$. The elements of the division ring act on $G$ by finite-valued (as opposed to single-valued) group homomorphisms. More precisely, a \emph{quasiendomorphism} is an $\acl(\emptyset)$-definable rank $1$ subgroup $H$ of $G\times G$ which projects surjectively on the first coordinate\footnote{We note that others also require $H$ to be connected. We arrive at the same division ring by quotienting out by the constant quasiendomorphisms.}. The corresponding finite-valued function outputs $H(a) := \{ b \sthat (a,b) \in H \}$ on input $a$. The Lascar (in)equality for ranks then ensures that the $H(a)$ are generically finite.
It is easy to see that all $H(a)$ are cosets of $H(0)$, which then must be a finite $\acl(\emptyset)$-definable group, so $H(0) \subset \acl(\emptyset) = G_0$. Thus, $H(a)$ has the same finite size for all $a$. Thus, in the quotient $G/G_0$, the function associated to $H$, which is well-defined for similar reasons, is either a bijection or the 0 map. Multiplication is defined on the set of quasiendomorphisms by composition, and addition by $(x,y)\in \alpha+\beta$ if and only if there are $z_1$ and $z_2$ so that $(x,z_1)\in \alpha$, $(x,z_2)\in \beta$ and $z_1+z_2=y$. With this structure, the set of quasiendomorphisms almost forms a division ring over which $G/G_0$ is a vector space. ``Almost'' because a quasiendomorphism of the form $G \times F$ for a finite $F \leq G$ is non-invertible and acts on $G/G_0$ by sending every element to $0$. The ring $R$ is the quotient of the set of quasiendomorphisms by these \emph{constant} quasiendomorphisms. The basic idea of the proof of our conjecture for modular groups is to obtain recursive presentations of $G_0$ and $R$ from a recursive $G$ of positive dimension, and then to put them back together to obtain the other models of the theory of $G$.

 In preparation for doing that in section 4, we show in section 3 that it is sufficient to prove the conjecture under the additional hypothesis that $G$ is an \emph{abelian structure}: a commutative group $G=(G,\, +,\, -,\, 0,\, (H_i)_{i\in I}, \, (c_j)_{j\in J} )$ where each $n$-ary relation symbol $H_i$ in the signature defines a subgroup of $G^n$, and the $c_j$ are constant symbols. Notation and results from Blossier and Bouscaren \cite{BnB} are omnipresent in the discussion of abelian structures in section 4. One key fact that we use repeatedly is that in an abelian structure, every formula is equivalent to a boolean combination of positive primitive formulae. The \emph{positive primitive} (or just pp) formulae are those formed from atomic formulae by conjunction and existential quantification. It is not difficult to see that in an abelian structure, every pp-formula without parameters or constants defines a subgroup of $G^n$ for some $n$. As above, each abelian structure $G$ splits into two pieces: $G_0$ and $G/G_0$, each of which is itself a structure for the same signature as $G$. Following \cite{BnB}, we refer to the theory of $G/G_0$ as $T_1$; it is also strongly minimal. The definable structure inherited by $G/G_0$ is precisely that of a vector space over the quasiendomorphism ring $R$; in particular, the dimension of $G/G_0$ over $R$ determines its isomorphism type. Most importantly, the full definable structure on $G$ can be recovered as $G = G_0 \oplus G/G_0$, where we interpret $H_i$ as $H_i(G_0)\oplus H_i(G/G_0)$.

\subsection{Short outline and notation alerts}
We first discuss $\Delta_1$-interdefinability  and ``expansions by prime constants;'' we later repeatedly use these to recursively change signatures to more convenient ones. The second section is about the spectra of disintegrated theories. In the third section, we make the reduction from arbitrary modular strongly minimal groups to strongly minimal abelian structures. The fourth section extracts recursive presentations of the prime model and the quasiendomorphism ring from a recursive positive dimensional strongly minimal abelian structure. 

Formulae may have parameters, unless explicitly stated otherwise. A ``Signature'' is a set of nonlogical symbols; a ``language'' is a set of formulae. We identify an $n$-ary relation symbol $R$ in the signature with the atomic formula $R(x_1, \ldots, x_n)$.
\begin{deff}
 For an $n$-ary relation symbol $R$, we write $\RM(R)$ for the Morley rank of the formula $R(x_1, x_2, \ldots, x_n)$, and $\dM(R)$ for the Morley degree. When we write $\dM(R/ \emptyset)$, we mean the maximal number of disjoint subsets of $R$ of full Morley rank, \emph{definable over the empty set}. Similarly for \emph{generic over the empty set}.

 We say that a formula $\phi$ defining a group is connected if $\dM(\phi)=1$. We say that $\phi$ is connected over the empty set if $\dM(\phi/ \emptyset)=1$.
\end{deff}

\subsection{Changing signature without loss}

 Much of our work amounts to finding the ``right'' signature for a particular theory, so we often pass to definitional expansions and back to reducts. The recursion theory requires a refinement of these notions, as follows.

\begin{deff} \label{doneidef}
 We say that an $L_1$-theory $T_1$ and an $L_2$-theory $T_2$ are \emph{interdefinable} if some $L_1 \cup L_2$-theory is a definitional expansion of both.

 Given a strongly minimal $L$-theory $T$ and an $L$-formula $\phi$ without parameters, we say that $\phi$ is $\Delta_1$ (respectively $\Sigma_1$) if for any $M \models T$ the set $\phi^M$ defined by $\phi$ in $M$ is recursive (respectively recursively enumerable) in the atomic diagram of $M$. 

 For relational signatures $L \subset L'$, we say that an $L'$-theory $T'$ is a \emph{\donee} of $T'|_L$ if for every $R \in L' \setminus L$ the theory $T'$ proves that $R$ is equivalent to some $\Delta_1$ $L$-formula.

 If the signature of $T'$ is not relational, we pass to the obvious theory in the obvious relational signature interdefinable with $T'$ before applying the above definition.

 We say that an $L_1$-theory $T_1$ and an $L_2$-theory $T_2$ are \emph{\donei}if some $L_1 \cup L_2$-theory is a \donee of both.
\end{deff}

 For example, existential formulae are always $\Sigma_1$. So when $T$ is model-complete, every formula is $\Delta_1$. Thus, two model-complete theories are \donei if and only if they are interdefinable.


\begin{lem}
 The notion ``$T'$ is a \donee of $T$'' is transitive, and ``$T_1$ is \donei with $T_2$'' is an equivalence relation.
\end{lem}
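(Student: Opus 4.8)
The two assertions share a single engine: \emph{substituting $\Delta_1$ definitions into a $\Delta_1$ formula again yields a $\Delta_1$ formula}, and I would isolate this as the main step. Concretely, suppose $L \subseteq L' \subseteq L''$ are relational, $T''$ is a \donee of $T'$, and $T'$ is a \donee of $T$, so that $T = T'|_L$, $T' = T''|_{L'}$, and hence $T = T''|_L$. Fix $R \in L'' \setminus L$. If $R \in L' \setminus L$, then $T'$, and a fortiori $T''$, already proves $R$ equivalent to a $\Delta_1$ $L$-formula, so there is nothing to do. Otherwise $R \in L'' \setminus L'$ and $T''$ proves $R \leftrightarrow \psi$ for some $\Delta_1$ $L'$-formula $\psi$; I would let $\psi^*$ be the $L$-formula obtained from $\psi$ by replacing each symbol $S \in L' \setminus L$ occurring in it by the $\Delta_1$ $L$-definition $\theta_S$ supplied by $T'$. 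Since $T'' \vdash S \leftrightarrow \theta_S$ for each such $S$, we get $T'' \vdash R \leftrightarrow \psi^*$, so it remains only to check that $\psi^*$ is $\Delta_1$ over $T$.

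For the recursion-theoretic core I would fix $M \models T$ and let $M'$ be its unique expansion to a model of $T'$; this expansion exists because a \donee is in particular an honest definitional expansion (proving each new symbol equivalent to an $L$-formula makes $T'$ equivalent to $T$ together with those definitions). Then $\psi^{*,M} = \psi^{M'}$, and $\psi$ being $\Delta_1$ over $T'$ gives that $\psi^{M'}$ is recursive in the atomic diagram of $M'$. The crux is that the atomic diagram of $M'$ is itself recursive in the atomic diagram of $M$: the $L$-atomic part is literally identical, while each new predicate $S \in L' \setminus L$ is interpreted in $M'$ as $\theta_S^M$, which is recursive in the diagram of $M$ because $\theta_S$ is $\Delta_1$ over $T$. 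Composing these two Turing reductions shows $\psi^{*,M}$ is recursive in the diagram of $M$, as needed, establishing transitivity of ``\donee''.

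I expect this recursion-theoretic composition to be the main obstacle, specifically the claim that the diagram of $M'$ reduces to that of $M$. This is immediate when $L' \setminus L$ is finite, as it is a finite join of Turing reductions; since our signatures are finite, that is the relevant case. In the general statement one must take care that the reductions witnessing $\Delta_1$-ness of the various $\theta_S$ can be assembled so that evaluating $\psi$ never demands more of the diagram of $M'$ than $M$ can recursively supply. Everything else in this half is definition-chasing.

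For the equivalence relation, reflexivity is witnessed by $T_1$ itself (there are no new symbols to define) and symmetry is built into the definition. For transitivity, suppose $U_{12}$ witnesses that $T_1$ and $T_2$ are \donei and $U_{23}$ witnesses that $T_2$ and $T_3$ are \donei; after normalizing the signatures (renaming the symbols of $L_3$ lying outside $L_2$ away from $L_1$) I may assume $L_1 \cap L_3 \subseteq L_2$, so the shared symbols of the three signatures cause no clash. I would then form the $L_1 \cup L_2 \cup L_3$-theory $U$ consisting of $T_2$ together with the $U_{12}$-definitions of the symbols of $L_1 \setminus L_2$ and the $U_{23}$-definitions of the symbols of $L_3 \setminus L_2$; this $U$ is a \donee of $U_{12}$ and of $U_{23}$, so by the transitivity just proved it is a \donee of $T_1$ and of $T_3$. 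Finally I would pass to the reduct $V = U|_{L_1 \cup L_3}$: each symbol of $L_3 \setminus L_1$ is $U$-provably equivalent to a $\Delta_1$ $L_1$-formula, and since the corresponding biconditional is an $(L_1 \cup L_3)$-sentence it survives in $V$; the symmetric statement holds for $L_1 \setminus L_3$. Thus $V$ is a \donee of both $T_1$ and $T_3$, witnessing that $T_1$ and $T_3$ are \donei.
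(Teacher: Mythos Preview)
Your proof is correct; the paper's own proof is the single line ``Immediate from definitions,'' and your argument is precisely the natural unpacking of that claim (the substitution-and-composition argument for transitivity of $\Delta_1$-definitional expansion, and the amalgamated witness $U$ for transitivity of $\Delta_1$-interdefinability). You have supplied considerably more detail than the authors deemed necessary, but the route is the same.
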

\begin{proof}
Immediate from definitions.
\end{proof}

The point of these definitions and the reason we assume finite signatures throughout this paper is for the following proposition. Were $T_1$ and $T_2$ to have infinite signatures, we would need uniformity in the $\Delta_1$-interdefinability for the following proposition to hold.

\begin{prop}\label{doneisamespec}
 If $T_1$ and $T_2$ are \donei strongly minimal theories in finite signatures, then $\spec(T_1) = \spec(T_2)$.
\end{prop}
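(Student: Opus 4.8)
The plan is to reduce the statement to the one-sided case and then track recursive presentations through the canonical correspondence of models. By the preceding Lemma, $\Delta_1$-interdefinability is the equivalence relation generated by $\Delta_1$-definitional expansions, and by definition there is an $L_1 \cup L_2$-theory $T^*$ that is a \donee of both $T_1$ and $T_2$. So it suffices to prove that whenever $T'$ (in finite signature $L'$) is a \donee of $T := T'|_L$, we have $\spec(T) = \spec(T')$; applying this twice yields $\spec(T_1) = \spec(T^*) = \spec(T_2)$. Following the definition, I would first pass to relational signatures, replacing any function and constant symbols by their graphs.

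Next I would set up the bijection between models. Since $T'$ is a definitional expansion of $T$, every $M \models T$ has a unique expansion $M' \models T'$, and this gives a bijection between the isomorphism classes of models of $T$ and of $T'$. Because a definitional expansion adds no new definable sets, it does not change $\acl$, hence preserves dimension and the dimension $d$ of the prime model; thus the dimension-$i$ model $M_i \models T$ corresponds to the dimension-$i$ model $M_i' \models T'$ for every $i \in \omega+1$. It therefore suffices to show, for each $i$, that $M_i$ is recursively presentable if and only if $M_i'$ is.

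For one direction, if $M_i'$ has a recursive presentation $N'$, then its $L$-reduct $N$ is a recursive presentation of $M_i$, since the atomic diagram of $N$ is obtained from that of $N'$ by restricting to the finitely many symbols of $L$, a recursive operation. For the converse, suppose $N \cong M_i$ is a recursive $L$-structure, and let $N'$ be its unique $L'$-expansion modeling $T'$, so $N' \cong M_i'$. For each $R \in L$ we have $R^{N'} = R^N$, which is recursive. For each of the finitely many $R \in L' \setminus L$, the definition of \donee provides a $\Delta_1$ $L$-formula $\phi_R$ with $T' \vdash \forall \bar x (R(\bar x) \leftrightarrow \phi_R(\bar x))$; since $\phi_R$ is $\Delta_1$, the set $\phi_R^N$ is recursive in the (recursive) atomic diagram of $N$, hence recursive, and $R^{N'} = \phi_R^N$. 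Thus every relation of $N'$ is recursive.

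The one genuine subtlety — and the place where finiteness of the signature is essential — is assembling these finitely many recursive relations into a single recursive atomic diagram for $N'$. With $L'$ finite we may decide which symbol a given atomic formula uses and dispatch to the corresponding decision procedure, so the atomic diagram of $N'$ is recursive and $N'$ is a recursive presentation of $M_i'$. Were $L'$ infinite, each $R^{N'}$ could be recursive without the family being \emph{uniformly} recursive, and the combined atomic diagram could fail to be computable; this is precisely the uniformity issue flagged in the remark preceding the proposition. Everything else is routine bookkeeping, so I expect no further obstacles.
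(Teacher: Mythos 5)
Your proof is correct and follows essentially the same route as the paper's: reduce to the one-sided case of a single $\Delta_1$-definitional expansion, obtain the reduct direction by forgetting the extra symbols, and obtain the expansion direction by noting that the finitely many new symbols have recursive interpretations by the definition of a $\Delta_1$ formula. The paper's proof is simply a terser version of yours, leaving implicit the bookkeeping about the model correspondence, dimension preservation, and the uniformity role of the finite signature that you spell out.
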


\begin{proof}
 It suffices to prove that $\spec(T_1) = \spec(T_3)$ whenever $T_3$ is a \donee of $T_1$. A recursive model of $T_1$ of dimension $n$ is obtained from a recursive model of $T_3$ of dimension $n$ by simply forgetting the extra information.
 A recursive model $M$ of $T_1$ of dimension $n$ produces a recursive model of $T_3$ of dimension $n$ because the interpretations of the finitely many new symbols are recursive subsets of the model, by definition of a $\Delta_1$ formula.
\end{proof}

One immediate consequence of Proposition \ref{doneisamespec} is that we may assume that our finite signatures are relational, which we will do throughout. Another form of recursive interdefinability between theories which we will need is that of adding or omitting constant symbols. As long as the constants which are added come from the prime model, this also does not change the spectrum.
\begin{deff}
 Let $T_0$ be a strongly minimal $L_0$-theory and let $T$ be an $L$-theory where $L := L_0 \cup \{ c_i \sthat i<n\}$. Then $T$ is called an \emph{expansion of $T_0$ by prime constants} if the $L_0$-type of $\bar{c}$ in $T$ is realized in the prime model of $T_0$.
\end{deff}
\begin{prop}\label{constantscanthurt}
 If $T$ is an expansion of $T_0$ by prime constants, then $\spec(T) = \spec(T_0)$.
\end{prop}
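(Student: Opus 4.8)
The plan is to set up a dimension-preserving bijection between the countable models of $T$ and those of $T_0$, and then check that this bijection preserves recursive presentability in both directions. Write $\bar{c} = (c_0, \ldots, c_{n-1})$ and let $k = \dim(\bar{c}/\emptyset)$ be the size of a maximal $\acl$-independent subtuple of $\bar{c}$ in $T_0$; since the $L_0$-type of $\bar{c}$ is fixed (being realized in the prime model $P_0$ of $T_0$), this $k$ is well defined, and $k \leq d_0 := \dim(P_0)$. Every $L$-model $M \models T$ has an $L_0$-reduct $M|_{L_0} \models T_0$ containing a realization of this type, and conversely any $T_0$-model containing such a realization expands to a $T$-model. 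The key computation is that $\acl_T(A) = \acl_{T_0}(A \cup \bar{c})$, so by additivity of dimension in the pregeometry, $\dim_T(M) = \dim_{T_0}(M|_{L_0}) - k$.

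First I would identify the models on each side. Minimizing $\dim_{T_0}(M|_{L_0})$ over $T_0$-models containing $\bar{c}$ is achieved exactly by $P_0$, so the prime model of $T$ is $(P_0, \bar{c})$, with $T$-dimension $d_0 - k$. Running this bookkeeping through the whole elementary chain shows that the model of $T$ of dimension $j$ over its prime model has $L_0$-reduct of $T_0$-dimension $d_0 + j$; that is, it is exactly the $j$-th model $M_j^{T_0}$ in the chain for $T_0$, now equipped with an interpretation of $\bar{c}$. The same arithmetic covers $j = \omega$, since subtracting a finite $k$ from $\omega$ gives $\omega$. Thus the index $j \in \omega+1$ labelling the spectrum matches on the two sides, and it remains only to compare recursive presentability of $M_j^T$ and $M_j^{T_0}$ for each $j$.

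For the direction $\spec(T) \subseteq \spec(T_0)$, I would simply forget the constants: if $N$ is a recursive $L$-structure isomorphic to $M_j^T$, then $N|_{L_0}$ is a recursive $L_0$-structure (its atomic diagram is computable from that of $N$) isomorphic to $M_j^{T_0}$. For the reverse direction, suppose $N$ is a recursive $L_0$-structure isomorphic to $M_j^{T_0}$. Since the type of $\bar{c}$ is realized in $P_0$, hence in $M_j^{T_0} \cong N$, I may pick elements $\bar{a} \in N$ realizing it. The expansion $(N, \bar{a})$ is then a $T$-model whose $L_0$-reduct has $T_0$-dimension $d_0 + j$, so $(N, \bar{a}) \cong M_j^T$; and $(N, \bar{a})$ is recursive, because adding finitely many named constants only adds finitely much data to the atomic diagram (deciding an atomic $L$-sentence reduces, by substituting $a_i$ for $c_i$, to deciding an atomic $L_0$-sentence about $N$, which is recursive).

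The step requiring the most care is the dimension bookkeeping that forces the two indexings to agree as $j \mapsto j$; once that is in place the recursion theory is routine, since passing to a reduct and naming finitely many fixed elements are both computable operations. I note that the choice of $\bar{a}$ in the reverse direction need not be effective — we only need such elements to exist, which is guaranteed because the type of $\bar{c}$ is realized in $P_0$ and hence, via the elementary embedding of the prime model, in every model of $T_0$.
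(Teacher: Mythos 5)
Your proof is correct and follows essentially the same route as the paper's: the model correspondence is reduct/expansion by the named tuple, the dimension over the prime model is unchanged, and naming finitely many prime-model elements (or forgetting them) preserves recursiveness. The paper's version is a two-sentence sketch of this same argument; your explicit bookkeeping via $\dim_T(M) = \dim_{T_0}(M|_{L_0}) - k$ and the remark that the choice of $\bar{a}$ need not be effective simply fill in details the paper leaves implicit.
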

\begin{proof}
Given any recursive model of $T_0$ which is $n$-dimensional over the prime model, adding constant symbols to name elements in the prime model of $T_0$ does not change the dimension over the prime model. Further, adding interpretations for these finitely many new constant symbols preserves recursiveness of the model.
\end{proof}

 Notions like ``the Morley rank of an $L$-formula'' only make sense relative to a fixed (strongly minimal) $L$-theory $T$. We routinely abuse notation by not specifying the background theory, since we are always working with one particular theory. We further abuse notation by not even specifying the signature, which we are constantly varying, which is justified by the following easy observation.

\begin{obs}
 Suppose $T_1$ is a strongly minimal $L_1$-theory, $T_2$ is an $L_2$-theory which is interdefinable with $T_1$, and $\phi_2$ is the translation to $L_2$ of an $L_1$-formula $\phi_1$. Then $T_2$ is also strongly minimal, and the Morley rank and degree of $\phi_2$ in models of $T_2$ is equal to the Morley rank and degree of $\phi_1$ in models of $T_1$. The new theory $T_2$ also inherits further geometric properties of $T_1$, such as the Zilber Trichotomy classification.
\end{obs}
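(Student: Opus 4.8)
The plan is to reduce everything to the single fact that interdefinable theories define the same sets, so that every invariant mentioned --- strong minimality, Morley rank and degree, and the trichotomy class --- is read off from a family of definable sets that does not change. Let $T$ be the $L_1 \cup L_2$-theory witnessing interdefinability, so that $T$ is a definitional expansion of both $T_1$ and $T_2$. The standard property of definitional expansions I would invoke is that every model $M_1 \models T_1$ has a unique expansion to a model $M \models T$ on the same universe, that $M|_{L_2} \models T_2$, and that this sets up a universe-preserving bijection between the models of $T_1$ and those of $T_2$. Under this correspondence I only ever compare $\phi_1$ in $M_1$ with its translation $\phi_2$ in $M_2 = M|_{L_2}$, that is, two formulas interpreted in reducts of one common structure $M$.

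The technical heart is the claim that for each model $M \models T$ and each $n$, the $L_1$-definable, $L_2$-definable, and $(L_1 \cup L_2)$-definable subsets of $M^n$, allowing parameters, all coincide. This is immediate from the definition of definitional expansion: since $T$ is a definitional expansion of $T_1$, each symbol of $L_2 \setminus L_1$ is $T$-provably equivalent to an $L_1$-formula, so every $(L_1 \cup L_2)$-formula is $T$-equivalent to an $L_1$-formula; symmetrically, every $(L_1 \cup L_2)$-formula is $T$-equivalent to an $L_2$-formula. Because these equivalences are proved in $T$ uniformly in the free variables, they persist after substituting arbitrary parameters from $M$, so the three families of parametrically definable sets are equal. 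In particular $\phi_1$ and $\phi_2$ define literally the same subset $\phi_1^M = \phi_2^M$ of $M^n$, and that set has the same definable subsets whether described in $L_1$ or in $L_2$.

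From here the conclusions follow by noting that each property is an invariant of this common family of definable sets. Strong minimality refers only to the definable subsets in one variable, which are identical for $M_1$ and $M_2$, so $T_1$ is strongly minimal if and only if $T_2$ is. Morley rank and degree are computed from the inclusion order on definable subsets by the usual transfinite recursion ($\RM(\psi) \geq \alpha + 1$ exactly when $\psi$ has infinitely many pairwise disjoint definable subsets of rank at least $\alpha$, and $\dM$ counts the top-rank pieces), a computation that sees only the lattice of definable sets; since $\phi_1^M = \phi_2^M$ carry the same such lattice, the $\RM$ and $\dM$ of $\phi_1$ and of $\phi_2$ agree. Finally, $a \in \acl(A)$ is witnessed by a finite $A$-definable set containing $a$, so the coincidence of definable sets forces $\acl^{M_1} = \acl^{M_2}$; the associated pregeometries, and therefore disintegration, modularity, and local modularity, transfer verbatim, as does interpretability of a group or a field, and with it the Zilber trichotomy class.

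I do not expect a genuine obstacle here: the statement is essentially the assertion that these notions are geometric, depending only on definable sets. The one point deserving care is the handling of parameters --- one must check that the coincidence of definable sets holds over arbitrary parameter sets, not merely over $\emptyset$, which is what is needed both for strong minimality and for the pregeometry --- together with the bookkeeping that a formula's translation really defines the same set in the common model; both are handled by the uniformity of the $T$-provable equivalences established above.
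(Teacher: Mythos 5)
Your argument is correct and is exactly the standard justification: the paper states this as an ``easy observation'' with no proof at all, and the reasoning it implicitly relies on is precisely yours --- interdefinable theories have the same parametrically definable sets in corresponding models, and strong minimality, Morley rank and degree, $\acl$, and the trichotomy class are all invariants of that lattice of definable sets. No gaps; the one subtle point (uniformity of the $T$-provable equivalences over arbitrary parameters) is the right one to flag, and you handle it correctly.
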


 One more piece of notation for changing signatures:

\begin{deff} \label{newsigdef}
 Given an $L$-theory $T$ and a set $S$ of $L$-formulae, we let $L(S) := \{ R_\phi \sthat \phi \in S\}$ be a new signature which has a relation symbol for each formula in $S$ and nothing else. We let $T(S)$ be the obvious $L(S)$ theory, that is the reduct to $L(S)$ of the $L \cup L(S)$-theory $T \cup \{ \forall x_1\, \ldots \forall x_{n_i}\, R_\phi(x_1, \ldots, x_{n_i}) \leftrightarrow \phi(x_1, \ldots, x_{n_i}) \}$.
\end{deff}

We continue to explicitly state the obvious.
\begin{lem} \label{newsiglem}
 In the above definition, if for every relational symbol $P$ in $L$ there is an $L(S)$-formula  $\psi_P$ such that $T \models (\forall x_1\, \ldots \forall x_n\, P(x_1, \ldots, x_n) \leftrightarrow \widetilde{\psi_P} (x_1, \ldots, x_n))$ where $\widetilde{\psi_P}$ is the $L$-formula obtained from $\psi_P$ by replacing each $R_{\phi}$ by $\phi$, then $T(S)$ is interdefinable with $T$. If in addition these $\psi_P$ and the $\phi \in S$ in the definition above are all $\Delta_1$, then $T$ and $T(S)$ are \donei.
\end{lem}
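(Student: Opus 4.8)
The plan is to verify the two clauses directly against Definition \ref{doneidef}, using the single $L \cup L(S)$-theory
\[
 T^* := T \cup \{ \forall x_1 \ldots \forall x_{n_\phi}\, (R_\phi(x_1,\ldots,x_{n_\phi}) \leftrightarrow \phi(x_1,\ldots,x_{n_\phi})) \sthat \phi \in S \}
\]
from Definition \ref{newsigdef} as the common expansion. Since the symbols $R_\phi$ are fresh, $L$ and $L(S)$ are disjoint reducts of $L \cup L(S)$; moreover $T$ is the reduct of $T^*$ to $L$ and, by construction, $T(S)$ is the reduct of $T^*$ to $L(S)$. Thus it suffices to show that $T^*$ is a (and, under the extra hypothesis, a \donee) definitional expansion of each of these two reducts.

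First I would observe that $T^*$ is a definitional expansion of $T$ with no hypothesis needed: the symbols of $L(S)$ are exactly the symbols of $L \cup L(S)$ absent from $L$, and for each one the axiom $\forall \bar{x}\,(R_\phi(\bar{x}) \leftrightarrow \phi(\bar{x}))$ in $T^*$ defines it by the $L$-formula $\phi$. For the refinement, each such $\phi$ is $\Delta_1$ by assumption, so $T^*$ is in fact a \donee of $T$.

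The substantive direction is that $T^*$ is a definitional expansion of $T(S)$, where now the ``new'' symbols are the relational symbols $P \in L$ and the candidate defining formulae are the $L(S)$-formulae $\psi_P$. Here I would invoke the hypothesis $T \models \forall \bar{x}\,(P(\bar{x}) \leftrightarrow \widetilde{\psi_P}(\bar{x}))$ together with a substitution-of-equivalents argument: because $T^*$ proves $R_\phi \leftrightarrow \phi$ for every $\phi \in S$, and $\widetilde{\psi_P}$ is obtained from $\psi_P$ precisely by replacing each $R_\phi$ by $\phi$, an induction on the syntactic structure of $\psi_P$ (through the boolean connectives and existential quantifiers that build it) yields $T^* \models \forall \bar{x}\,(\psi_P(\bar{x}) \leftrightarrow \widetilde{\psi_P}(\bar{x}))$. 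Chaining this with the hypothesis and with $T \subseteq T^*$ gives $T^* \models \forall \bar{x}\,(P(\bar{x}) \leftrightarrow \psi_P(\bar{x}))$, so every $P \in L$ is definable over $L(S)$ in $T^*$. This shows $T^*$ is a definitional expansion of $T(S)$, hence $T$ and $T(S)$ are interdefinable. For the refinement, the $\psi_P$ are $\Delta_1$ by assumption, so $T^*$ is a \donee of $T(S)$ as well, and therefore $T$ and $T(S)$ are \donei.

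The only step needing any care is the substitution claim $T^* \models \psi_P \leftrightarrow \widetilde{\psi_P}$, which I expect to be the main (though still routine) obstacle: one must confirm that replacing each atomic subformula $R_\phi(\bar{t})$ by $\phi(\bar{t})$ underneath quantifiers and connectives preserves provable equivalence modulo $T^*$, which is exactly the content of the standard replacement theorem applied to the equivalences in $T^*$. Everything else is bookkeeping against the definitions of definitional expansion and of $\Delta_1$ formulae.
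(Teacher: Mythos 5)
Your proof is correct, and it is exactly the routine unwinding of Definitions \ref{doneidef} and \ref{newsigdef} that the paper has in mind: it states this lemma without proof under the heading ``we continue to explicitly state the obvious,'' and your choice of the common expansion $T^*$ together with the replacement-of-provable-equivalents step is the canonical argument. Nothing is missing.
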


\section{Disintegrated theories}

 In this section we show that the spectrum of a disintegrated strongly minimal theory $T$ in a finite signature $L$ must be $\emptyset$, $\omega+1$, or $\{0\}$: nothing, everything, or just the prime model. It is clear that the first two are possible. An example of the last is the subject of \cite{HLZ}, from which this section borrows heavily. The key is to use the information in one recursive positive-dimensional model of $T$ to construct recursive models of $T$ of all dimensions. To do this, we pass to an new theory $\tilde{T}$ in a new signature $\tilde{L}$ interdefinable with $T$. This $\tilde{T}$ is then automatically \donei with $T$ because disintegrated strongly minimal theories are model-complete after naming a prime model \cite{GHLLM} (see Corollary \ref{triviallowrank}, p. \pageref{TLR}).

In what follows, we first prove the interdefinability of $T$ with a theory $\tilde{T}$ containing only relation symbols of rank 0 or 1. To do this, we inductively pass through many intermediate signatures $L'$ and theories $T'$, at each stage removing the symbol of highest Morley (rank, degree). The following lemma is the induction step of the construction.
	
 \begin{llama}
 Suppose that $T$ is a strongly minimal disintegrated theory in a signature $L$, and $R$ is a relation symbol in $L$ with $\RM(R)=k \geq 2$ and $\dM(R/\emptyset) = 1$.
Then there exists a finite set $S$ of $L$-formulae of Morley rank at most $(k-1)$ such that $T( S \cup L \setminus \{R\} )$ is interdefinable with $T$ in the sense of Definition \ref{newsigdef}.
\end{llama}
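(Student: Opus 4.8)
The plan is to use Lemma \ref{newsiglem} to reduce the statement to a purely model-theoretic claim about the generic type of $R$. By that lemma, to get interdefinability it suffices to exhibit finitely many $L$-formulas of rank at most $k-1$ together with an expression of $R$ as a first-order combination of those formulas and the symbols of $L \setminus \{R\}$. I would first make the following simplification: it is enough to find a Boolean combination $\beta$ of $\emptyset$-definable $L$-formulas, each of Morley rank at most $k-1$, such that $\RM(R \triangle \beta) < k$. Indeed, granting such a $\beta$, the formula $R \triangle \beta$ is itself an $L$-formula of rank at most $k-1$, so I may take $S$ to consist of the formulas appearing in $\beta$ together with $R \triangle \beta$; then $R = \beta \triangle (R \triangle \beta)$ recovers $R$ in the new signature, and Lemma \ref{newsiglem} gives interdefinability (with all the trivial $\psi_P = R_P$ for $P \in L \setminus \{R\}$).

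Next I would analyze the generic type. Since $\dM(R/\emptyset)=1$, the set $R$ has a unique generic type $p$ over $\emptyset$, of rank $k$; fix $\bar a = (a_1,\dots,a_n) \models p$. Disintegration gives $\acl(\bar a) = \bigcup_i \acl(a_i)$, so the coordinates split into those lying in $\acl(\emptyset)$ and a partition of the rest into interalgebraicity blocks, with distinct blocks mutually independent over $\emptyset$; because $\RM(\bar a) = k$ there are exactly $k$ blocks. As $k \geq 2$, I can partition the coordinate set as $P \sqcup Q$, with each of $P$ and $Q$ containing at least one block (assigning the $\acl(\emptyset)$-coordinates to $Q$, say), so that $\RM(\pi_P(\bar a)) = r$ and $\RM(\pi_Q(\bar a)) = k-r$ with $1 \le r \le k-1$. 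Let $p_P$ and $p_Q$ denote the types of $\bar a_P$ and $\bar a_Q$ over $\emptyset$.

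I would then build $\beta$ as a product of projections, but chosen inside $p$ rather than as full projections of $R$ (to avoid rank inflation from the non-generic part of $R$). Choose an $\emptyset$-definable formula $\alpha(\bar x_P) \in p_P$ with $\RM(\alpha) = r$ and $\dM(\alpha/\emptyset)=1$, whose generic is $p_P$ — this exists by taking any rank-$r$ formula in $p_P$ and passing to its $\emptyset$-definable Morley-degree-one component through $p_P$ — and likewise $\delta(\bar x_Q) \in p_Q$ with $\RM(\delta) = k-r$ and $\dM(\delta/\emptyset)=1$. Set $\beta := \alpha(\bar x_P) \wedge \delta(\bar x_Q)$, a product set of rank $r + (k-r) = k$ whose two conjuncts have rank at most $k-1$. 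Since $p$ contains both $\alpha$ and $\delta$, every realization of $p$ lies in $\beta$, so the rank-$k$ part of $\beta$ contains $p$. The content of the lemma is the reverse containment: I must show $\beta$ has no rank-$k$ type other than $p$. A rank-$k$ realization of $\beta$ must have $\bar x_P$ generic in $\alpha$ (hence of type $p_P$), $\bar x_Q$ generic in $\delta$ (type $p_Q$), and $\bar x_P$ independent from $\bar x_Q$ over $\emptyset$; thus the rank-$k$ types of $\beta$ are precisely the free amalgams of $p_P$ and $p_Q$ over $\emptyset$.

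The main obstacle, and the step where disintegration is essential, is establishing that this free amalgam is \emph{unique}, so that it equals $p$ and hence $\dM(\beta/\emptyset)=1$ with generic $p$; this immediately yields $\RM(R\triangle\beta)<k$ and completes the proof. The point is that for independent $\bar b_P, \bar b_Q$, disintegration forces $\acl(\bar b_P \bar b_Q) = \acl(\bar b_P)\cup\acl(\bar b_Q)$, so there is no algebraic linkage across $P$ and $Q$ beyond $\acl(\emptyset)$, and one expects $\operatorname{tp}(\bar b_P\bar b_Q/\emptyset)$ to be determined by $p_P$ and $p_Q$ — that is, these types are stationary over $\emptyset$. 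The delicacy is the possible multiplicity of nonforking extensions arising from $\acl^{\mathrm{eq}}(\emptyset)$; verifying that no such multiplicity occurs on the real tuples at hand is where I would invoke the structure theory of disintegrated strongly minimal theories (as in \cite{GHLLM}), treating the $\acl(\emptyset)$-coordinates separately. I note that nothing forces the split to be coarse: iterating the argument down to single blocks would express $R$, up to a set of rank $<k$, via rank-$1$ and rank-$0$ formulas, but for the present induction step the coarse two-block split producing rank at most $k-1$ is all that is required.
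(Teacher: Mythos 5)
Your strategy is essentially the paper's: approximate $R$ up to rank $<k$ by a product of $\emptyset$-definable sets of lower rank passing through the generic type of $R$, then recover $R$ as a Boolean combination of that product with the low-rank error terms. The two points where you diverge are harmless or helpful: the paper splits the coordinates into all $k$ interalgebraicity blocks plus the $\acl(\emptyset)$-coordinates and uses the projections of $R$ as the factors, whereas you use a coarser two-block split and take degree-one-over-$\emptyset$ formulas inside the generic type rather than projections; the latter is actually more careful, since a projection of $R$ can acquire extra rank from the non-generic part of $R$. Your reduction to ``find $\beta$ with $\RM(R\triangle\beta)<k$'' is also fine.

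The gap is precisely at the step you flag as the main obstacle, and your announced route to closing it does not work: uniqueness of the free amalgam of $p_P$ and $p_Q$ over $\emptyset$ (equivalently, $\dM(\alpha\wedge\delta/\emptyset)=1$) can genuinely fail in a disintegrated strongly minimal theory, so it cannot be ``verified absent.'' Take $M=A\times\mathbb{Z}/3\mathbb{Z}$ with $E((a,i),(b,j))$ iff $a=b$, and $R(x,y,z,w)$ holding iff $E(x,y)$, $E(z,w)$, $\neg E(x,z)$, $x\neq y$, $z\neq w$, and the $\mathbb{Z}/3\mathbb{Z}$-coordinates satisfy $y-x=w-z$. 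This structure is strongly minimal and disintegrated ($\acl$ of a point is its $E$-class), $\RM(R)=2$, and $\dM(R/\emptyset)=1$ because the global flip $(a,i)\mapsto(a,-i)$ is an automorphism identifying the two generic types. The blocks are $\{1,2\}$ and $\{3,4\}$, and $p_P=p_Q$ is the generic type of $E(u,v)\wedge u\neq v$, which has degree one over $\emptyset$ for the same reason; but there are two free amalgams of $p_P$ and $p_Q$ over $\emptyset$ (``same difference'' versus ``different difference''), separated by the $\emptyset$-definable $R$ itself, so $\dM(\beta/\emptyset)=2$ and $\RM(R\triangle\beta)=k$. The multiplicity here comes from classes in $\acl^{\mathrm{eq}}(\emptyset)$ with no trace in $\acl(\emptyset)$ (which is empty), so ``treating the $\acl(\emptyset)$-coordinates separately'' does not reach it. The missing idea is that the formulas in $S$ are permitted parameters: one must name representatives of the relevant $\acl^{\mathrm{eq}}(\emptyset)$-classes (prime constants), after which $p_P$ and $p_Q$ become stationary and your argument goes through — in the example, the binary $D_k(u,v):=R(c_0,c_k,u,v)$ for named $c_0,c_k$ in one $E$-class do the job. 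You have correctly located the load-bearing claim — the paper's own write-up asserts the corresponding implication (``it follows that $\alpha$ and $R$ have the same Morley rank and degree over $\emptyset$'') without further justification — but as proposed, your proof does not close it, and the closure requires parameters rather than a verification over $\emptyset$.
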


\begin{proof}
 Intuitively, all we need to show is that $R$ is definable in $L(S \cup L \setminus \{R\} )$; formally, we need $S\cup L\setminus \{R\}$ to satisfy the hypothesis of Lemma \ref{newsiglem}. The idea is to approximate $R$, up to pieces of lower Morley rank, by a product of definable sets of Morley rank $1$ and $0$; these definable sets and leftover pieces then constitute $S$.

 Take a generic over the empty set realization $(a_1, \ldots, a_n)$ of $R$ in some model $M$ of $T$.
 Since the theory is disintegrated, there is a partition $D \cup C_1 \cup C_2 \cup\ldots\cup C_k = \{ 1, \ldots, n\}$ with $a_i \in \acl(\emptyset)$ for $i \in D$, and $\RM((a_i)_{i \in C_j}) = 1$ for each $j \leq k$, and $\RM(a_i, a_{i'}) = 2$ whenever $i \in C_j \neq C_{j'} \ni i'$. For each $i \in D$, let the formula $\phi_i(x)$ be the projection of $R$ to the $i$th coordinate, and for each $j\leq k$, let $\psi_j(\bar{y})$ be the projection of $R$ onto the coordinates $\{x_i\sthat i\in C_j\}$. As $\dM(R/\emptyset) =1$, the Morley rank of each $\phi_i$ is 0 and the Morley rank of each $\psi_j$ is 1 and all $\phi_i$ and $\psi_j$ have Morley degree 1 over $\emptyset$.

 Now let $$ \alpha(x_1, \ldots, x_n) := \left( \bigwedge_{i \in D} \phi_i(x_i)\right) \wedge \left( \bigwedge_{j \leq k} \psi_j\left( (x_i)_{i \in C_j}\right)\right) .$$
Since $dM(\phi_i/\emptyset)=1$ and $dM(\psi_j/\emptyset)=1$, it follows that $\alpha$ and $R$ have the same Morley rank and degree over $\emptyset$. Now $\bar{a}$ is a generic realization of both, so  $\alpha \wedge R$ has the same Morley rank and degree over $\emptyset$ as $R$ and $\alpha$.

 Therefore, the two leftovers $\beta_1 := R \wedge \neg \alpha$ and $\beta_2 := \alpha \wedge \neg R$ have lower Morley rank. Clearly, $T$ proves that $R$ is equivalent to $(\beta_1 \vee (\alpha \wedge \neg \beta_2))$. Thus, $S := \{ \beta_1, \beta_2 \}\cup \{\phi_i \sthat i\in D\}\cup\{\psi_j \sthat j \leq k \}$ does the trick.
\end{proof}

In the next proposition, we apply this lemma to a relation symbol of maximal Morley rank as an induction step toward having only symbols of Morley rank 1 or 0 left in the end.

\begin{prop} \label{trivialreduces}
 Every strongly minimal disintegrated theory $T$ in a finite signature $L$ is interdefinable with some strongly minimal $\tilde{T}$ in a finite relational signature $\tilde{L}$ such that all relation symbols in $\tilde{L}$ have Morley rank 0 or 1.
\end{prop}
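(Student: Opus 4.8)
The plan is to prove Proposition \ref{trivialreduces} by induction, using the preceding Lemma as the induction step. The induction is on the multiset of pairs $(\RM(R), \dM(R/\emptyset))$ ranging over relation symbols $R$ in the signature, ordered so that we always attack a symbol of maximal Morley rank (and among those, one whose structure we can control). The target is a signature in which every remaining relation symbol has Morley rank $0$ or $1$, so the induction should terminate precisely when no symbol of rank $\geq 2$ survives.

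The first issue to address is that the Lemma as stated requires $\dM(R/\emptyset) = 1$, whereas an arbitrary symbol $R$ of rank $k \geq 2$ may have higher Morley degree over $\emptyset$. So before invoking the Lemma I would first replace $R$ by its $\dM(R/\emptyset)$-many pieces: since $R$ has Morley rank $k$, it decomposes (over $\emptyset$) into finitely many definable subsets each of Morley rank $k$ and Morley degree $1$ over $\emptyset$, together with lower-rank remainder. Each of these degree-one pieces is an $L$-formula to which the Lemma applies directly, and $R$ itself is recoverable as a disjunction of these pieces. This reduces the general case to the hypothesis of the Lemma. I would fold this reduction into the induction by first passing (via Definition \ref{newsigdef} and Lemma \ref{newsiglem}) to an interdefinable theory in which each rank-$k$ symbol has been split into its degree-one components.

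With that in place, the induction step is: pick a symbol $R$ of maximal Morley rank $k \geq 2$ and degree $1$ over $\emptyset$, apply the Lemma to obtain a finite set $S$ of $L$-formulae all of Morley rank at most $k-1$, and replace $L$ by $S \cup (L \setminus \{R\})$. By the Lemma, $T(S \cup L \setminus \{R\})$ is interdefinable with $T$; by transitivity of interdefinability (and the fact that interdefinability preserves strong minimality and Morley rank and degree, as recorded in the Observation), the new theory is again strongly minimal with the same ranks and degrees on corresponding formulae. Crucially, the new signature is still finite, and the multiset of pairs $(\RM, \dM/\emptyset)$ over its symbols has strictly decreased in the well-founded ordering, since we removed one symbol of rank $k$ and introduced only symbols of rank $\leq k-1$.

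The main obstacle, and the point requiring care, is verifying that this termination argument is genuinely well-founded and that the bookkeeping stays finite at every stage. Each application of the Lemma introduces several new symbols, so the signature grows in cardinality even as the maximal rank drops; I must confirm that the measure I decrease (say, the number of symbols of the current maximal rank, with ties broken by total degree, inducted downward on the rank itself) strictly decreases and cannot loop. Since the Lemma guarantees all newly introduced formulae have rank at most $k-1 < k$, no new symbol of rank $k$ is ever created, so the count of top-rank symbols strictly decreases until it reaches zero, at which point the maximal surviving rank drops and we recurse. This is a standard finite-descent argument, but it is the place where one must be explicit to avoid a false impression of circularity. Iterating until no symbol of rank $\geq 2$ remains produces the desired $\tilde{T}$ in a finite relational signature $\tilde{L}$ with all symbols of Morley rank $0$ or $1$, and interdefinability with $T$ follows by transitivity.
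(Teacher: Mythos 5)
Your proposal is correct and follows essentially the same route as the paper: first split each maximal-rank symbol into its finitely many Morley-degree-one-over-$\emptyset$ pieces so the Lemma applies, then induct lexicographically on (maximal Morley rank, number of symbols of that rank), using the Lemma as the induction step and noting that only strictly lower-rank symbols are introduced. Your extra care about well-foundedness is just an expanded version of the paper's $(k,m)$ bookkeeping.
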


\begin{proof}
By replacing symbols in the signature by finitely many symbols with Morley degree 1 over $\emptyset$, we may assume all symbols of maximal Morley rank in the signature have degree 1 over the empty set.

 Let $k := \max \{ \RM(R) \sthat R \in L \}$, suppose $k>1$, and suppose that there are $m$ relation symbols in $L$ with this maximal Morley rank. We induct on the pair $(k, m)$ ordered lexicographically: at each induction step we decrease $k$ if $m=1$, and otherwise leave $k$ fixed and decrease $m$. The induction step is precisely the last lemma applied to one of the relation symbols of maximal Morley rank $k$. Proceeding in this manner yields the result.
\end{proof}

Having an interdefinable theory $\tilde{T}$, we now show that it is \donei with $T$. We use the following theorem from \cite{GHLLM}.
\begin{fact}[Goncharov, Harizanov, Laskowski, Lempp, McCoy \cite{GHLLM}]
Let $T$ be a disintegrated strongly minimal theory and let $M$ be a model of $T$. Then $\eldiag(M)$ is model complete.
\end{fact}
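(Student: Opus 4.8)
The plan is to prove model completeness of $\eldiag(M)$ through the following standard reduction. Since an $L_M$-structure $N$ models $\eldiag(M)$ exactly when $M \preceq N$, Robinson's test tells me it suffices to show that whenever $M \preceq N \subseteq N'$ with $N, N' \models T$ and $N$ an $L$-substructure of $N'$, the inclusion is elementary. Embedding $N'$ elementarily into a monster $\mathfrak{C} \succeq M$, this in turn follows once I establish the Lemma: any model $N \models T$ with $M \preceq N$ that sits as an $L$-substructure $N \subseteq \mathfrak{C}$ is in fact an elementary substructure $N \preceq \mathfrak{C}$ (then $N \preceq \mathfrak{C}$, $N' \preceq \mathfrak{C}$ and $N \subseteq N'$ give $N \preceq N'$). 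So the whole proof concentrates on this Lemma.

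To prove the Lemma I would run the Tarski--Vaught test. Fix a finite tuple $\bar{a} \in N$ and a formula $\chi(y, \bar{a})$ with $\mathfrak{C} \models \exists y\, \chi(y, \bar{a})$. By strong minimality $\chi(\mathfrak{C}, \bar{a})$ is finite or cofinite. In the cofinite case $N$ is infinite and misses only finitely many points, so it already contains a witness. Hence the entire content is the finite case, where every element of $\chi(\mathfrak{C}, \bar{a})$ lies in $\acl^{\mathfrak{C}}(\bar{a})$; so it is enough to prove the key claim $\acl^{\mathfrak{C}}(\bar{a}) \subseteq N$ for every finite $\bar{a} \in N$. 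Here I would invoke disintegration: $\acl^{\mathfrak{C}}(\bar{a}) = \bigcup_i \acl^{\mathfrak{C}}(a_i)$, reducing the key claim to singletons, namely $\acl^{\mathfrak{C}}(a) \subseteq N$ for each $a \in N$. If $a \in M$ this is immediate, since $\acl^{\mathfrak{C}}(M) = M \subseteq N$ as $M \preceq \mathfrak{C}$.

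The real work, and what I expect to be the main obstacle, is the case $a \in N \setminus M$. Such an $a$ lies outside $\acl^{\mathfrak{C}}(M) = M$, so it is non-algebraic over $M$ and realizes the unique non-algebraic $1$-type $p$ over $M$. Given an $L_M$-formula $\phi(x,a)$ with $\phi(\mathfrak{C},a)$ finite of size $k$, the formula $\rho_k(y) := \exists^{=k} x\, \phi(x,y)$ belongs to $p$; since $p$ is non-algebraic, $\rho_k(\mathfrak{C})$ is cofinite with finite exceptional set $E \subseteq \acl(M) = M$. Because each element of $E$ is named by a constant, the sentence $\forall y\,\big(\bigwedge_{e \in E} y \neq e \,\rightarrow\, \rho_k(y)\big)$ holds in $M$ and hence lies in $\eldiag(M)$, so it holds in $N$; as $a \notin M \supseteq E$, we get $N \models \rho_k(a)$, i.e. $\phi(x,a)$ has exactly $k$ solutions in $N$ as well. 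Since $\phi(N,a) \subseteq \phi(\mathfrak{C},a)$ and both have size $k$, they coincide, giving $\phi(\mathfrak{C},a) \subseteq N$; ranging over $\phi$ yields $\acl^{\mathfrak{C}}(a) \subseteq N$. This closes the key claim and the Lemma. The crux is precisely the interaction of the two hypotheses: naming an entire model collapses all exceptional sets into $\acl(M) = M$ so that they become first-order nameable, while disintegration is what lets me analyze $\acl$ of a tuple coordinatewise, so that each relevant singleton is either named or realizes the single non-algebraic type over $M$. Without disintegration the exceptional locus attached to an algebraic configuration over a tuple need not lie in $M$, and this final step breaks down.
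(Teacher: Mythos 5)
First, note that the paper does not prove this statement: it is imported as a Fact from \cite{GHLLM}, so there is no in-paper proof to compare against, and your proposal has to stand on its own.

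Your overall architecture is reasonable and correctly identifies where the two hypotheses enter: reduce model completeness of $\eldiag(M)$ to showing that any $N\models\eldiag(M)$ sitting as a substructure of a monster $\mathfrak{C}\succeq M$ is elementary in $\mathfrak{C}$; run Tarski--Vaught; dispose of the cofinite case by cardinality; and reduce the finite case to the key claim $\acl^{\mathfrak{C}}(\bar a)\subseteq N$, which disintegration reduces to singletons and naming $M$ reduces to elements of $N\smallsetminus M$, which all realize the unique non-algebraic type over $M$. Up to and including the deduction that $N\models\rho_k(a)$, the argument is sound.

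The gap is the final counting step. From $N\models\rho_k(a)$ you know that $\{b\in N : N\models\phi(b,a)\}$ has exactly $k$ elements --- solutions \emph{in the sense of $N$'s satisfaction relation}. To conclude $\phi(\mathfrak{C},a)\subseteq N$ you compare this set with $\phi(\mathfrak{C},a)$ via the asserted inclusion $\phi(N,a)\subseteq\phi(\mathfrak{C},a)$, i.e.\ that $N\models\phi(b,a)$ implies $\mathfrak{C}\models\phi(b,a)$. For an arbitrary first-order $L_M$-formula $\phi$ and a structure $N$ that is so far only known to be a \emph{substructure} of $\mathfrak{C}$, this upward transfer is not available; indeed, having it for all $\phi$ (equivalently, applying it to $\phi$ and $\neg\phi$) is exactly the statement $N\preceq\mathfrak{C}$ that the Lemma is trying to prove, so the step is circular. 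It cannot be repaired simply by restricting to existential $\phi$ (for which upward persistence does hold), because the Tarski--Vaught step needs the key claim for arbitrary algebraic formulae, and an element of $\acl^{\mathfrak{C}}(a)$ need not lie in any \emph{finite existentially} $a$-definable set; nor does an induction on the complexity of $\phi$ obviously close up, since the disintegration step replaces the given formula $\chi(y,\bar a)$ by witnessing formulae $\phi(x,a_i)$ of unbounded complexity. This is precisely the point at which the actual proof in \cite{GHLLM} has to do real work (a finer analysis of algebraic closure and components in disintegrated theories, of the kind carried out in Proposition \ref{trivialkey} of this paper), rather than a two-line cardinality comparison.
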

\begin{cory}\label{recintrivial}
Let $T$ be a disintegrated strongly minimal theory and let $M$ be a model of $T$. Then every definable set in $M$ is recursive in the atomic diagram of $M$.
\end{cory}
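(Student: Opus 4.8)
The plan is to combine the cited Fact with the classical syntactic characterization of model completeness and the elementary observation that quantifier complexity translates directly into arithmetic complexity relative to the atomic diagram. Recall that a theory is model complete if and only if every formula is equivalent, modulo the theory, to an existential formula; equivalently, applying this to the negation, every formula is also equivalent to a universal formula. I would apply this to $\eldiag(M)$, whose language $L_M$ is $L$ together with a constant $c_a$ for each $a \in M$, so that formulas with parameters from $M$ become parameter-free $L_M$-formulas.

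First I would fix a definable set in $M$, say defined by the $L_M$-formula $\phi(\bar x)$ (the parameters now appearing as constants). By the Fact, $\eldiag(M)$ is model complete, so there is an existential $L_M$-formula $\exists \bar y\, \theta(\bar x, \bar y)$, with $\theta$ quantifier-free, which is equivalent to $\phi$ modulo $\eldiag(M)$. Since $M \models \eldiag(M)$, these two formulas define the same subset of $M^{|\bar x|}$. Now identify the universe of $M$ with $\omega$. Relative to the atomic diagram of $M$, the truth of a quantifier-free $L_M$-formula at a given tuple is decidable, being a Boolean combination of atomic facts, each of which the oracle answers directly. Hence membership in $\phi^M = (\exists \bar y\, \theta)^M$ is recursively enumerable in the atomic diagram: to confirm $\bar a \in \phi^M$, search through all tuples $\bar b$ until one is found with $M \models \theta(\bar a, \bar b)$.

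To upgrade recursively enumerable to recursive, I would apply exactly the same reasoning to $\neg \phi$. Model completeness of $\eldiag(M)$ yields an existential formula equivalent to $\neg \phi$, so the complement $M^{|\bar x|} \setminus \phi^M$ is likewise recursively enumerable in the atomic diagram. A set that is both recursively enumerable and co-recursively enumerable relative to an oracle is recursive in that oracle, so $\phi^M$ is recursive in the atomic diagram of $M$, as claimed. Concretely, dovetailing the two witness searches for $\bar a \in \phi^M$ and $\bar a \notin \phi^M$ terminates for every $\bar a$.

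There is no serious obstacle here; the only content beyond bookkeeping is the passage from model completeness to the arithmetic hierarchy, and the symmetric use of the Fact on both $\phi$ and $\neg \phi$ to obtain a two-sided (recursive) bound rather than merely a one-sided (recursively enumerable) one. The one point worth stating carefully is that we need only the \emph{existence} of the equivalent existential formula for each individual definable set, not a uniform or effective way of producing it: each definable set is recursive in the atomic diagram even though the procedure witnessing this is non-uniform across different definable sets.
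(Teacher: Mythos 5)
Your proof is correct and follows essentially the same route as the paper: invoke model completeness of $\eldiag(M)$ to get an existential and a universal (equivalently, existential for the negation) formula defining the set, observe that these give enumerations of the set and its complement from the atomic diagram, and conclude recursiveness. The paper's argument is just a compressed version of yours.
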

\begin{proof}
Let $S$ be a definable set in $M$. Then there exists $\bar{a},\bar{b}\in M$ and an existential formula $\phi(\bar{x},\bar{a})$ and a universal formula $\psi(\bar{x},\bar{b})$ so that $M\models S(\bar{x}) \leftrightarrow \phi(\bar{x},\bar{a}) \leftrightarrow \psi(\bar{x},\bar{b})$. Since $\phi$ gives a way to enumerate $S$ using the atomic diagram of $M$ and $\psi$ gives a way to enumerate $\neg S$ using the atomic diagram of $M$, we see that $S$ is recursive in the atomic diagram of $M$.
\end{proof}

\begin{cory} \label{triviallowrank}\label{TLR}
 Suppose that $T$ and $\tilde{T}$ are interdefinable disintegrated strongly minimal theories, then $T$ and $\tilde{T}$ are $\Delta_1$-interdefinable.
\end{cory}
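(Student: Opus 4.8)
The plan is to observe that, for disintegrated strongly minimal theories, the gap between ``interdefinable'' and ``$\Delta_1$-interdefinable'' closes automatically: every parameter-free definable set is already $\Delta_1$, so any witnessing definitional expansion is \emph{a fortiori} a $\Delta_1$-definitional expansion. Thus the work has all been done in Corollary \ref{recintrivial}, and what remains is just to unwind definitions.

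First I would spell out what interdefinability gives us. By Definition \ref{doneidef}, there is an $L \cup \tilde{L}$-theory $\hat{T}$ that is a plain definitional expansion of both $T$ and $\tilde{T}$. Following the convention stated after Definition \ref{doneidef}, I pass to the obvious interdefinable relational signatures, so that $L$, $\tilde{L}$, and $L \cup \tilde{L}$ may be taken relational; this replacement is itself an interdefinability and introduces no new symbols beyond translations of the old ones. Being a definitional expansion, $\hat{T}$ proves, for each symbol $R \in \tilde{L} \setminus L$, an equivalence $R \leftrightarrow \phi_R$ with $\phi_R$ a parameter-free $L$-formula, and symmetrically, for each $P \in L \setminus \tilde{L}$, an equivalence $P \leftrightarrow \psi_P$ with $\psi_P$ a parameter-free $\tilde{L}$-formula.

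The core step is to verify that these witnessing formulae are $\Delta_1$. I would apply Corollary \ref{recintrivial} to the disintegrated strongly minimal theory $T$: for every model $M \models T$, the set $\phi_R^M$ is definable (over $\emptyset$) and hence recursive in the atomic diagram of $M$. By the definition of a $\Delta_1$ formula this says precisely that each $\phi_R$ is $\Delta_1$ relative to $T$. Applying Corollary \ref{recintrivial} instead to the disintegrated strongly minimal theory $\tilde{T}$ shows symmetrically that each $\psi_P$ is $\Delta_1$ relative to $\tilde{T}$. Therefore $\hat{T}$ is a $\Delta_1$-definitional expansion of both $T$ and $\tilde{T}$, which is exactly the statement that $T$ and $\tilde{T}$ are $\Delta_1$-interdefinable.

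I expect no genuine obstacle; the only points needing care are bookkeeping. I must confirm that the reduction to relational signatures is harmless, which it is, since it is built into the definition of $\Delta_1$-definitional expansion and does not manufacture any non-$\Delta_1$ symbols. I also want to note explicitly that ``definable set'' in Corollary \ref{recintrivial}, which permits parameters, specializes to the parameter-free formulae used by a definitional expansion, a parameter-free formula being just a definable set over $\emptyset$. The entire mathematical content is thus supplied by Corollary \ref{recintrivial}, itself a consequence of the model-completeness of $\eldiag(M)$ for disintegrated $M$.
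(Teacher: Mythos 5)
Your proof is correct and is exactly the paper's argument: the paper's proof reads ``Immediate from the previous Corollary and the definition of a $\Delta_1$ formula,'' and your write-up simply unwinds those definitions, applying Corollary \ref{recintrivial} to both $T$ and $\tilde{T}$ to see that the witnessing formulae of the common definitional expansion are automatically $\Delta_1$.
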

\begin{proof}
Immediate from the previous Corollary and the definition of a $\Delta_1$ formula.
\end{proof}

Now by Proposition \ref{trivialreduces}, Proposition \ref{doneisamespec}, and Corollary \ref{triviallowrank}, it suffices to characterize spectra of strongly minimal disintegrated theories in finite signatures, under the additional assumption that each relation symbol has Morley rank 0 or 1.

\begin{prop} \label{trivialkey}
  Suppose that $T$ is a strongly minimal disintegrated theory in a finite relational signature $L$, and all relation symbols in $L$ have Morley rank 1 or 0. If $T$ has one recursive model of positive dimension, then all models of $T$ have recursive presentations.
\end{prop}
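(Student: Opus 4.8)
The plan is to read off from $M$ two pieces of recursively presented data---the prime model and the isomorphism type of a single generic algebraic-closure class over it---and then reassemble copies of the latter to produce every model of $T$. So I would first establish the following description of the models. Fix a model and let $P := \acl(\emptyset)$. Because $T$ is disintegrated, $\sim$ partitions the non-algebraic elements into classes, each of the form $\acl(b)\setminus P$ for a generic $b$. Two features make the model a ``free amalgam'' of these classes over $P$. On the one hand, no relation symbol links two independent classes: if $R(\bar c)$ held with non-algebraic coordinates drawn from two $\acl$-independent classes, then $\bar c$ would contain two independent elements, so $\RM(\bar c)\geq 2$, contradicting $\RM(R)\leq 1$. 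On the other hand, since there is a unique non-algebraic $1$-type over $P$, any two generic $b,b'$ have the same type over $P$, so there is an isomorphism $\acl(b)\to\acl(b')$ fixing $P$ pointwise; thus all ``decorated classes'' $P\cup(\acl(b)\setminus P)$ are isomorphic over $P$ to the fixed dimension-one model $N:=\acl(a)$ (for any generic $a$). Consequently every model of $T$ is isomorphic to the structure built from $P$ together with some number $k\leq\omega$ of disjoint copies of the class $N\setminus P$, where a relation $R$ holds on a tuple exactly when all of its non-$P$ coordinates lie in a single copy and $R$ holds there in $N$. As $k$ ranges over $\{0,1,\dots,\omega\}$ (discarding $k=0$ when $P$ is finite, hence not a model), these amalgams are exactly the countable models $M_i$ of $T$.

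\textbf{Reduction.} Granting this picture, it suffices to produce a recursive copy of $N$ in which the subset $P$ is \emph{decidable}: the amalgamation is then recursive, since for an input tuple one can recursively read off each coordinate's copy-tag and either look the relation up in $N$ or, if two copies are involved, return false. I would fix, non-effectively, a single non-algebraic element $a\in M$ (one exists as $\dim M\geq 1$); naming this one parameter does not affect recursiveness.

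\textbf{Recursive extraction.} The set $\acl(a)$ is recursively enumerable in the atomic diagram of $M$: one dovetails over formulas $\psi(x,a)$ and for each tests whether $\psi(M,a)$ is finite. This test is recursively enumerable, because Corollary \ref{recintrivial} makes $\psi(M,a)$ and its complement recursive, while strong minimality bounds the finite instances uniformly (the bound coming from $T$), so finiteness of $\psi(M,a)$ is witnessed by its complement exceeding that bound. As each $c\in\acl(a)$ is enumerated, I would classify it by running in parallel the two tests ``$c\in\acl(\emptyset)$'' and ``$a\in\acl(c)$,'' each recursively enumerable by the same finiteness argument. By exchange, exactly one halts: if $c\in P$ then $\acl(c)=P\not\ni a$, and otherwise $c$ is interalgebraic with $a$, so $a\in\acl(c)$. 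This yields a recursive presentation of $N=\acl(a)$ together with a recursive predicate for $P$.

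\textbf{Conclusion and obstacle.} Amalgamating $k$ copies of this presentation over $P$ for each $k\leq\omega$ (and retaining $P$ itself when it is infinite) produces recursive structures which, by the structural picture, realize all the isomorphism types $M_i$; hence every model of $T$ is recursively presentable. I expect the crux to be the extraction step: upgrading the merely semidecidable algebraicity relations to a genuinely \emph{recursive} partition of $\acl(a)$ into $P$ and the generic class. This is precisely where disintegration is indispensable, both through the model-completeness of $\eldiag(M)$ underlying Corollary \ref{recintrivial} and through the clean $\sim$-class structure; the hypothesis that all relations have Morley rank $\leq 1$ is, in turn, what forces distinct classes to be mutually free and so licenses the amalgamation.
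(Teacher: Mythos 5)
Your structural picture --- the prime model plus freely amalgamated copies of a single generic algebraic-closure class, with the rank-$\le 1$ hypothesis preventing any relation from linking two independent classes --- is exactly the decomposition the paper uses, and your reduction to ``a recursive copy of $N=\acl(a)$ with $P=\acl(\emptyset)$ decidable in it'' is the right target. The gap is in the extraction step, which you correctly flag as the crux but do not close. Your enumeration of $\acl(a)$ dovetails over \emph{all} formulas $\psi(x,a)$ and tests finiteness of $\psi(M,a)$, and this needs two uniformities you do not have. First, Corollary \ref{recintrivial} is non-uniform: it asserts that for each definable set there \emph{exist} an existential and a universal definition (with parameters), but gives no effective way to find them from $\psi$, so ``decide membership in $\psi(M,a)$'' is not a single algorithm ranging over all $\psi$. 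Second, your finiteness test needs the strong-minimality bound $n_\psi$ so that finiteness of $\psi(M,a)$ is witnessed by its complement exceeding that bound; this bound ``comes from $T$,'' but $T$ is not assumed decidable --- only one model is recursive, and in the spectrum-$\{0\}$ situation the theory is genuinely more complicated than its models (cf.\ the remark preceding Corollary \ref{Cory-One-Jump}). The same two obstructions recur in your parallel semi-decision procedures for ``$c\in\acl(\emptyset)$'' and ``$a\in\acl(c)$.''

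The paper's proof supplies precisely the missing ingredient: instead of quantifying over all formulas, it extracts from the finitely many rank-$\le 1$ relation symbols finitely many binary edge relations $E_{Rij}$ (projections onto pairs of coordinates, with the finitely many ``infinite-valence'' elements named as constants), to which the non-uniform Corollary \ref{recintrivial} need only be applied finitely often. The mathematical content is then the Claim that the connected component $\nbh(a)$ under these finitely many edges already equals the full interalgebraicity class of a generic $a$, proved by an automorphism swapping $\nbh(a)$ and $\nbh(b)$ for independent generics. Local finiteness of the resulting graph then makes $\nbh(a)$ recursively enumerable and makes $\acl(\emptyset)$ recursively enumerable by comparing finite neighborhoods $\nbh_i(b)$ against $\nbh_i(a)$. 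You would need this lemma, or some substitute that circumvents the uniformity obstruction, before your extraction step becomes an actual algorithm; the remainder of your argument would then go through.
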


\begin{proof}
 Following \cite{HLZ}, we first define the \emph{connected component of $a$} for $a \in M \models T$ and prove that, for generic $a$, this coincides with the set of elements interalgebraic with $a$. We fix a recursive model $M$ of $T$ with positive dimension.

 Let $R$ be an $n$-ary relation symbol in $L$ and $i, j \leq n$, and suppose that the three projections $\pi_i(R) \subseteq M$, $\pi_j(R) \subseteq M$ and $\pi_{ij}(R) \subset M \times M$ all have Morley rank $1$. We define $(R,i,j)(a,b)$ to be the formula $\exists \bar{z}\, R(\bar{z})\wedge z_i=a\wedge z_j=b$. For each generic $a$, there are only finitely many elements $b$ such that $M\models (R,i,j)(a,b)\vee (R,i,j)(b,a)$. Thus, we can fix the finitely many $c_k$ $(k<l)$ so that $M\models \exists^{\infty}z\, (R,i,j)(c_k,z)$ as well as the finitely many $d_k$ $(k<m)$ so that $M\models \exists^{\infty}z\, (R,i,j)(z,d_j)$. We add constant symbols to the language to name each of these $c_k$ and $d_k$ for all $(R,i,j)$.

For each $(R, i,j)$, we define an edge relation $E_{Rij}$ by
 $$E_{Rij}(a,b):=\left(\left[\bigwedge_{j<l}a\neq c_j\right] \wedge \left[\bigwedge_{j<m} b\neq d_j\right] \wedge (R,i,j)(a,b)\right)\vee$$ $$\vee\left(\left[\bigvee_{j<l} a=c_j\right] \wedge \neg (R,i,j)(a,b)\right)\vee\left(\left[\bigvee_{j<m}b=d_j\right] \wedge \neg (R,i,j)(a,b)\right).$$ 
That is, $E_{Rij}(a,b)$ holds if and only if $(R,i,j)$ witnesses that $a$ and $b$ are interalgebraic. Since there are only finitely many $E_{Rij}$, and each defines a recursive set in each model by Corollary \ref{recintrivial}, adding interpretations of these symbols to a recursive presentation of a model of $T$ gives a recursive presentation of the expansion. Thus the expansion has the same recursive spectrum as $T$.

 We define the $n$th neighborhood of $a$, or $\nbh_n(a)$, for $a \in M \models T$ inductively for $n \in \omega$. Let $\nbh_0(a) := \{a\}$. Define $b \in \nbh_1(a)$ if there is some $E_{Rij}$ so that $E_{Rij}(a,b)$. Then $c \in \nbh_{n+1}(a)$ if there is some $b \in \nbh_n(a)$ such that $c \in \nbh_1(b)$. The connected component of $a$ is then $\nbh(a) := \bigcup_n \nbh_n(a)$.

\begin{claim}
Let $a\in M$ be generic. The set of elements interalgebraic with $a$ is precisely $\Nbh(a)$.
\end{claim}
\begin{proof} It is clear that ``$b \in \nbh(a)$'' is an equivalence relation (as $E_{Rij}(a,b)$ if and only if $E_{Rji}(b,a)$), and that $b \in \nbh(a)$ implies that $b \in \acl(a)$. To prove the converse, we take a saturated model $U$ of $T$ (dimension at least 2 suffices) and show that for any two independent $a$ and $b$ there is an isomorphism of $U$ fixing $U \smallsetminus (\nbh(a)\cup \nbh(b))$ and switching $a$ with $b$. If there were some $c$ interalgebraic with $a$ but not in $\nbh(a)$, it would now be forced into the algebraic closure of $b$, contradicting $a$ being independent from $b$. As both $a$ and $b$ are generic, $\nbh(a)$ is isomorphic to $\nbh(b)$ (as in \cite{HLZ}) over the finitely many elements named by constants (including the $c_k$ and $d_k$ from above). We fix an isomorphism $f$ of $\nbh(a)$ with $\nbh(b)$ and expand it to a map $g$ on $M$ by letting $g$ act via $f$ on $\nbh(a)$, letting $g$ act via $f^{-1}$ on $\nbh(b)$, and letting $g$ act via the identity on $M\smallsetminus (\nbh(a)\cup \nbh(b))$.

 We now show that $g$ is an isomorphism by considering tuples $\bar{c}$ on which some relation symbol $R$ holds. As each relation symbol has rank no greater than 1, no tuple to be considered contains elements of both $\nbh(a)$ as well as $\nbh(b)$. Relations holding on tuples entirely contained on one of $\nbh(a)$, $\nbh(b)$ or $M\smallsetminus (\nbh(a)\cup \nbh(b))$ are certainly preserved. By symmetry, we must only look at tuples $\bar{c}$ partially in $\nbh(a)$ and partially in $M\smallsetminus (\nbh(a)\cup \nbh(b))$. Each element of $\bar{c}\smallsetminus \nbh(a)$, by virtue of not being $E_{Rij}$ related to any element of $\bar{c}\cap \nbh(a)$ must be one of the finitely many $c_j$ or $d_j$ for the relation $(R,i,j)$. Thus, since the isomorphism of $\nbh(a)$ with $\nbh(b)$ was taken to be an isomorphism over the constants, this relation is preserved. Thus, the required automorphism exists, showing that interalgebraicity with a generic element $a$ is precisely the same as being in $\nbh(a)$.\end{proof}

 Let $\C$ be the isomorphism type over the constants of $\nbh(a)$ for a generic $a$. We have also shown that $b \in \acl(\emptyset)$ if and only if $\nbh(b) \not\cong \C$.

 From our recursive model $M$ containing a generic element $a$, we can recursively enumerate $\nbh(a)$. There are only finitely many edge-relations, and, aside from finitely many elements, each point has the same number of neighbors. Thus, we can recursively determine the isomorphism type of $\Nbh_i(b)$ for any element $b\in M$. This gives us an algorithm for enumerating $\acl(\emptyset)$: For each $i\in \omega$ and $b\in M$, recursively find $\Nbh_i(a)$ and $\Nbh_i(b)$, and check if they are isomorphic over the constants. If they are not, we see that $b$ is in $\acl(\emptyset)$. This makes both $\nbh(a)$ and $\acl(\emptyset)$ recursive $L$-structures over the constants.

Each model of $T$ is simply a union of one copy of $\acl_M(\emptyset)$ and some number of disjoint copies of $\nbh(a)$ over the constants. This gives a way to (uniformly) recursively construct each model of $T$.
\end{proof}

We have now proved the theorem we are after.

\begin{thm}\label{trivialspec}
If $T$ is a strongly minimal disintegrated theory in a finite signature, then $\Spec(T)=\emptyset, \{0\}, \text{ or } \omega+1$
\end{thm}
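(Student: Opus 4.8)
The plan is to assemble the theorem from the machinery already developed, reducing everything to Proposition \ref{trivialkey} after first arranging that its signature hypothesis holds. So, given a disintegrated strongly minimal $T$ in a finite signature $L$, I would first invoke Proposition \ref{trivialreduces} to obtain an interdefinable strongly minimal theory $\tilde{T}$ in a finite relational signature $\tilde{L}$ in which every relation symbol has Morley rank $0$ or $1$. By the Observation following Proposition \ref{constantscanthurt}, $\tilde{T}$ inherits the Zilber trichotomy classification of $T$ and hence is again disintegrated (and strongly minimal). This lets me upgrade mere interdefinability to $\Delta_1$-interdefinability via Corollary \ref{triviallowrank}, and then apply Proposition \ref{doneisamespec} to conclude $\Spec(T) = \Spec(\tilde{T})$.

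It then suffices to show $\Spec(\tilde{T}) \in \{\emptyset, \{0\}, \omega+1\}$. Here $\tilde{T}$ meets exactly the hypotheses of Proposition \ref{trivialkey}: it is disintegrated, strongly minimal, in a finite relational signature, with all relation symbols of Morley rank $0$ or $1$. Consequently, if some $k \in \Spec(\tilde{T})$ with $k > 0$ --- that is, if $\tilde{T}$ has a recursive model of positive dimension --- then Proposition \ref{trivialkey} forces every model of $\tilde{T}$ to be recursively presentable, i.e. $\Spec(\tilde{T}) = \omega + 1$. In other words, $\Spec(\tilde{T})$ has the property that the presence of any positive-dimensional recursive model forces the entire spectrum.

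Finally, I would apply the combinatorial observation recorded after Conjecture \ref{conjtoprove}: the only subsets $S \subseteq \omega+1$ for which ``$k \in S$ and $k > 0$ imply $S = \omega+1$'' are $\emptyset$, $\{0\}$, and $\omega+1$. Since $\Spec(\tilde{T})$ enjoys this property, it is one of these three, and therefore so is $\Spec(T) = \Spec(\tilde{T})$.

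At this stage the proof is essentially an assembly, so there is no genuine obstacle remaining; all of the real content lives in Propositions \ref{trivialreduces} and \ref{trivialkey}. The one point I would be careful about is that the hypotheses of those propositions transfer correctly through the change of signature --- specifically, that disintegration and strong minimality are preserved under interdefinability (so that Corollary \ref{triviallowrank} genuinely applies to $\tilde{T}$, not merely to $T$), and that the rank-$0$-or-$1$ restriction on $\tilde{L}$ is precisely what Proposition \ref{trivialkey} demands. Once these transfers are checked, the chain $\Spec(T) = \Spec(\tilde{T}) \in \{\emptyset, \{0\}, \omega+1\}$ closes the argument.
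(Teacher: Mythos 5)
Your proposal is correct and matches the paper's own proof essentially step for step: Proposition \ref{trivialreduces}, then Corollary \ref{triviallowrank} and Proposition \ref{doneisamespec} to transfer the spectrum, then Proposition \ref{trivialkey} plus the combinatorial observation about subsets of $\omega+1$. The extra care you take in checking that disintegration and strong minimality survive the change of signature is implicit in the paper's version but not a new idea.
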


\begin{proof}
First apply Proposition \ref{trivialreduces} to obtain $\tilde{T}$ from $T$, which is $\Delta_1$-interdefinable by Corollary \ref{triviallowrank}. By Proposition \ref{doneisamespec}, $\spec(\tilde{T}) = \spec(T)$. By Proposition \ref{trivialkey}, if any positive dimensional model of $T$ is recursive, all models have recursive presentations. The only spectra consistent with this are $\emptyset, \{0\}, \text{ and } \omega+1$.
\end{proof}

\section{From modular group to abelian structure}
\subsection{The reduction}
 The purpose of this section is to take a strongly minimal modular group and to show that its spectrum is equal to the spectrum of a (related) strongly minimal abelian structure, so that what we prove in later sections about the spectra of abelian structures applies to strongly minimal modular groups in general.

 Let $L := \{+, R_1,\ldots, R_n, c_1,\ldots c_k \}$ and let $T$ be a strongly minimal modular $L$-theory implying that $+$ satisfies the group axioms. Our use of the symbol $+$ is justified since any strongly minimal group is abelian. In this section, we build a theory $T'$ of abelian structures in a language $L':=\{+,-,G_1,\ldots, G_l,c_1,\ldots c_{k'}\}$, which is \donei with $T$ in the sense of Definition \ref{doneidef}. Note that $-$ is $\Delta_1$-definable from $+$, since $x=-y$ if and only if $x+y=0$. In particular, we show the following:

\begin{thm}\label{thm-red-to-abstructs}
Let $T$ be the theory of a strongly minimal modular group in finite signature. Then there exists an interdefinable strongly minimal theory of abelian structures $T'$ such that the following conditions hold.
 \begin{itemize}
\item Each relation symbol in the signature of $T$ is equivalent to a boolean combination of cosets of groups in the signature of $T'$.
\item Each relation symbol in $T'$ is equivalent to a boolean combination of translates of relations in the signature of $T$.
\end{itemize}
\end{thm}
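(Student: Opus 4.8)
The plan is to read the signature $L'$ directly off the modularity of $T$, and then to verify interdefinability in both directions, the delicate half being the recovery of the subgroups $G_j$ from the relations $R_i$. Because $T$ is modular, every $\emptyset$-definable subset of $G^m$ is a boolean combination of cosets of $\acl(\emptyset)$-definable subgroups (the preliminary discussion; \cite{BnB}). First I would apply this to each relation symbol $R_i$, viewed as an $\emptyset$-definable subset of $G^{m_i}$, writing $R_i$ as a boolean combination of cosets $a+H$. A routine reduction lets me take each such $H$ to be $\emptyset$-definable: replacing $H$ by the intersection $H^0$ of its finitely many $\mathrm{Aut}(M/\emptyset)$-conjugates gives an $\emptyset$-definable finite-index subgroup, and each coset $a+H$ is then a finite union of cosets of $H^0$ whose representatives still lie in $\acl(\emptyset)$. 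The finitely many subgroups produced this way, over all $i$, become the relation symbols $G_1,\dots,G_l$; the finitely many coset representatives all lie in $\acl(\emptyset)$, hence in the prime model, and I name them by new constants, which are therefore prime constants. Together with $+$, $-$, and the original constants (which, being named, also lie in $\acl(\emptyset)$), this is an abelian-structure signature $L'$, and by construction each $R_i$ is a boolean combination of cosets of the $G_j$ translated by named constants. This is the first bullet.

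Interdefinability now reduces, through Lemma \ref{newsiglem}, to writing each original symbol as an $L'$-formula and each $G_j$ as an $L$-formula. The former is the first bullet. The latter holds trivially in the weak sense, since each $G_j$ is an $\emptyset$-definable set and so is defined by \emph{some} $L$-formula; but the theorem demands the stronger second bullet, that $G_j$ be a boolean combination of \emph{translates of the} $R_i$ (and of $+$). This is the step I expect to be the main obstacle, and it is precisely what upgrades the conclusion from interdefinability to \donei: the natural definition of a subgroup $G_j$ from the $R_i$ is positive-primitive, hence a priori only $\Sigma_1$, whereas any boolean combination of translates of the atomic (thus recursive) relations $R_i$ and $+$ is automatically recursive in the atomic diagram, i.e.\ $\Delta_1$.

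To produce a boolean-combination-of-translates definition, I would isolate, inside some $R_i$ whose decomposition involves $H:=G_j$, a single coset $c+H$ with $c$ a named constant, and then use the identity $G_j = -c+(c+H)$, a single translate. The isolation would be carried out by intersecting $R_i$ with finitely many of its own translates by named constants: since only finitely many subgroups occur and each contributes only finitely many cosets, with representatives in $\acl(\emptyset)$, distinct cosets can be separated by such intersections, as in the toy computation $R\cap((0,c)+R)=\{x=y\}$ for $R=\{x=y\}\cup\{x=y+c\}$ in $G^2$. I expect to organize this as an induction on Morley rank: the top-rank cosets of $R_i$ are cosets of the maximal subgroups in its decomposition and can be cut out first, after which the lower-rank remainder, itself a boolean combination of cosets of lower-rank $G_j$, is handled by the inductive hypothesis. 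Finite Morley rank and strong minimality guarantee that the induction terminates and that only finitely many translates are needed at each stage.

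Finally, interdefinability with $T$ makes $T'$ strongly minimal and keeps it modular (the observation recorded just before Definition \ref{newsigdef}), while its signature --- a group operation together with subgroup-defining relation symbols and constants --- is by definition that of an abelian structure. Lemma \ref{newsiglem} then yields interdefinability of the relational reducts; since $-$ is $\Delta_1$-definable from $+$ and the added constants are prime (Proposition \ref{constantscanthurt}), the two bullets promote this to making $T$ and $T'$ \donei, as needed for Sections 3 and 4. The technical heart throughout is the second bullet: turning the $\Sigma_1$ positive-primitive description of each $G_j$ into a boolean combination of translates of the $R_i$, which is exactly what secures $\Delta_1$-definability.
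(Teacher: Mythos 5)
Your overall architecture matches the paper's: invoke modularity to write each relation symbol as a boolean combination of cosets of $\acl(\emptyset)$-definable connected groups, name coset representatives by prime constants, and then recover groups as boolean combinations of translates of the original relations, which is what secures $\Delta_1$-definability. But the key step --- your claim that ``distinct cosets can be separated by such intersections,'' so that a single coset $c+H$ can be isolated inside $R_i$ and $H$ recovered as $-c+(c+H)$ --- is false in general, and this is precisely the difficulty the paper's proof is built around. Take $R = H \cup ((c,0)+H)$ with $2c=0$ (e.g.\ in a vector space over $\mathbb{F}_2$, with $H$ the diagonal in $G^2$); then $R$ is itself a group containing $H$ with index $2$, every translate of $R$ is a coset of the group $R$, and no boolean combination of cosets of $R$ equals $H$, since $H$ is a proper nonempty subset of a single $R$-coset. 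Your toy computation $R\cap((0,c)+R)=\{x=y\}$ silently assumes $2c\neq 0$; when $2c=0$ that intersection returns all of $R$. A second, smaller gap: even when separation is possible, intersecting $R_i$ with its translates does not produce clean cosets of $H$, because the lower-rank holes $\bar b_{ij}+H_{ij}$ in the decomposition are translated along with everything else and the intersection still has holes; one obtains only a \emph{generic} subset of the union of cosets, and you give no mechanism for filling the holes back in.

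The paper's proof accepts both obstructions rather than fighting them. It fixes a connected group $A$ of maximal rank in the decomposition of $R$, shows that the union $B$ of the $A$-cosets appearing in $R$ is a finite union of translates of a finite intersection of translates of $R$ (intersecting along suitably generic elements of $A$ kills the other top-rank components, and the subsequent union of translates fills the holes, by genericity in the sense of stable group theory), and then runs a descending chain of intersections on a translate of $B$ containing $A$ until it stabilizes at a \emph{group} $G$ with $A \leq G \subseteq B-\bar b_1$. Crucially, $G$ may be a proper finite-index supergroup of $A$; the new signature carries $G$, not $A$, and the leftover pieces $R \smallsetminus X$ and $X \smallsetminus R$ (for $X$ a coset of $G$) have strictly smaller Morley (rank, degree) and are fed back into the induction as new relation symbols. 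To repair your argument you would need to replace ``isolate a coset of $H$'' by ``isolate a coset of some finite-index supergroup of $H$'' and restructure the induction around the symmetric difference --- at which point you have essentially reconstructed the paper's proof.
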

\begin{cory}
Let $T$ be the theory of a strongly minimal modular group in finite signature. Then there exists a $\Delta_1$-interdefinable theory $T'$ of abelian structures in a finite signature.
\end{cory}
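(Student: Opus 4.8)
The plan is to show that the interdefinability already produced by Theorem \ref{thm-red-to-abstructs} is automatically $\Delta_1$-interdefinability, with no new construction required. By the criterion behind Lemma \ref{newsiglem} together with Definition \ref{doneidef}, it suffices to exhibit, for each symbol of $L \setminus L'$ a defining $L'$-formula and for each symbol of $L' \setminus L$ a defining $L$-formula, all of them $\Delta_1$; the combined $L \cup L'$-theory is then simultaneously a \donee of $T$ and of $T'$, which is exactly what \donei means. The two bullet points of the theorem furnish precisely these formulas, so the entire content of the corollary is the observation that they are $\Delta_1$.

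The key point is that all of these defining formulas are quantifier-free once their finitely many representatives are named. A single coset $\bar a + G_j$ is defined by the atomic $L'$-formula $G_j(\bar x - \bar a)$ (subtraction taken coordinatewise), so a boolean combination of cosets of the $G_j$ is quantifier-free in $L'$; symmetrically, a boolean combination of translates of the $R_i$ is quantifier-free in $L$, and $-$ is $\Delta_1$-definable from $+$ as noted just before the theorem. Now in any model the operation $+$ is total and hence recursive in the atomic diagram (one searches for its unique value), atomic relations are read directly off the atomic diagram, and boolean combinations of sets recursive in the atomic diagram are again recursive. Therefore every such quantifier-free formula defines a set recursive in the atomic diagram, i.e.\ is $\Delta_1$.

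The step demanding care — and the one I expect to be the main obstacle — is the treatment of the representatives appearing in these cosets and translates: were a defining formula forced to quantify existentially over a choice of coset, it would only be $\Sigma_1$, not $\Delta_1$. I would resolve this by noting that, because each $R_i$ and each $G_j$ being decomposed is $\emptyset$-definable, the representatives may be taken in $\acl(\emptyset)$, that is, in the prime model. Proposition \ref{constantscanthurt} then lets me adjoin these finitely many prime constants to both signatures without disturbing either spectrum, after which every representative is named by a constant in the signature and the formulas above are genuinely quantifier-free. A named constant defines a recursive singleton (search the atomic diagram for the element equal to it), so the passage to the relational reformulation required by Definition \ref{doneidef} introduces no symbol outside $\Delta_1$.

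With both directions of the definitional equivalence witnessed by $\Delta_1$ formulas, the combined theory is a \donee of $T$ and of $T'$, so $T$ and $T'$ are \donei, and the corollary follows. I emphasize that essentially all the mathematical work lives in Theorem \ref{thm-red-to-abstructs}; the corollary is a recursion-theoretic upgrade whose only non-formal ingredient is the verification that the boolean decompositions use prime (hence nameable) parameters, keeping the witnessing formulas quantifier-free rather than merely existential.
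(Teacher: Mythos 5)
Your proof is correct and follows essentially the same route as the paper, which derives the corollary directly from Theorem \ref{thm-red-to-abstructs} by observing that the boolean combinations of cosets and of translates are quantifier-free (hence $\Delta_1$) once the finitely many parameters, chosen in the prime model, are named via Proposition \ref{constantscanthurt}. The only quibble is your parenthetical identification of ``representatives in $\acl(\emptyset)$'' with ``in the prime model'' --- what the elementarity argument actually gives, and all that is needed, is that the representatives may be chosen in the prime model, which may properly contain $\acl(\emptyset)$.
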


 We struggle towards the abelian structure inductively, obtaining a new (finite) signature and a new theory, all \donei with the original $T$. At each step, the signature will be comprised of a set $\mathcal{G}$ of groups, a set $\mathcal{C}$ of constants, and a set $\mathcal{R}$ of other relation symbols. Our quest is to empty out $\mathcal{R}$ at the expense of growing the other two. Each induction step will decrease the number of relation symbols of highest Morley rank and degree in $\mathcal{R}$. Although the total number of symbols in $\mathcal{R}$ may increase, this induction is well-founded. The following proposition is the induction step, removing a maximal Morley rank and degree $R$ from $\mathcal{R}$ at the expense of adding new constant symbols to $\mathcal{C}$, a new group $G$ to $\mathcal{G}$, and two new relation symbols for $R \smallsetminus X$ and $X \smallsetminus R$ to $\mathcal{R}$, where $X$ is a defined coset of $G$.

\begin{prop}
 If $T$ is a strongly minimal modular theory in a language $L = \{ +, R, \ldots \}$ implying that $+$ defines a group, then after an expansion by prime constants, there is an $L$-$\Delta_1$-definable (in fact, a boolean combination of translates of $R$) group $G$ and a coset $X$ of $G$ such that $R \smallsetminus X$ and $X \smallsetminus R$ are of strictly lower Morley (rank, degree) than $R$. \end{prop}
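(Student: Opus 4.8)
The plan is to peel off a single top-rank coset from $R$ using the connected stabilizer of $R$, and to supply its representative from the prime model rather than from a generic point. Write $k := \RM(R)$ and $d := \dM(R)$; since $R$ is a relation symbol, the set it defines is $\emptyset$-definable. First I would invoke modularity: because $+$ defines a modular (hence $1$-based) strongly minimal group, the structure theory recalled in the introduction --- that every definable subset of $G^m$ is a boolean combination of cosets of $\emptyset$-definable subgroups --- yields a connected $\emptyset$-definable subgroup $G \leq G^n$ with $\RM(G) = k$ such that $R$ agrees, up to a set of Morley rank $< k$, with a disjoint union $X_1 \sqcup \cdots \sqcup X_d$ of cosets of $G$. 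Concretely, $G$ is the connected component of the set-stabilizer $\{ g : \RM((g+R)\triangle R) < k\}$: this is a subgroup, its finite-index connected part fixes each top-rank component setwise, and each such component is therefore a single coset of $G$.

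The step I expect to be the main obstacle is showing that $G$ is not merely $\emptyset$-definable but $\Delta_1$, i.e.\ recursive in the atomic diagram of every model. Unlike the disintegrated case, I cannot appeal here to model-completeness after naming a prime model. Instead I would show --- and this is where $1$-basedness does the real work --- that $G$ is equivalent to a boolean combination of finitely many translates of $R$. Such a formula is quantifier-free over $\{+, -, R\}$ together with finitely many named constants; being simultaneously existential and universal it is $\Delta_1$, so it defines a recursive subset of each model. Extracting the boolean-combination form of the connected stabilizer from the cosets comprising $R$ is the technical heart of the argument, and is exactly the content of the parenthetical ``in fact, a boolean combination of translates of $R$.''

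With $G$ in hand, I would locate the coset $X$ without ever naming a generic element. The set $R^{\mathrm{top}} := \{ x : \RM(R \cap (x+G)) = k\}$ is $\emptyset$-definable and equals $X_1 \sqcup \cdots \sqcup X_d$ exactly; in particular it is nonempty, so by elementarity it is realized in the prime model $M_0$. Choosing $c \in R^{\mathrm{top}}(M_0)$ and naming it is a legitimate expansion by prime constants, since the type of $c$ is realized in $M_0$, and $c$ lies in exactly one component $X_{i_0}$, so $X := c + G$ is a single top-rank coset of $G$, definable over $\{+,-,R\}$ and the new constant, hence $\Delta_1$ once $G$ is. This maneuver is what circumvents the fact that a positive-rank coset has no representative in $\acl(\emptyset)$: I apply elementarity to the $\emptyset$-definable union of all top cosets, not to any single (and possibly conjugate) coset.

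Finally, I would read off the drop in complexity. Since $X$ coincides with the component $X_{i_0}$ up to rank $< k$, the set $X \smallsetminus R$ has Morley rank $< k$, while $R \smallsetminus X$ agrees up to rank $<k$ with $\bigsqcup_{i \neq i_0} X_i$, so it has Morley rank $k$ and Morley degree $d - 1$ when $d \geq 2$, and Morley rank $< k$ when $d = 1$. In either case both $R \smallsetminus X$ and $X \smallsetminus R$ lie strictly below $R$ in the lexicographic order on $(\RM, \dM)$, which is exactly what the induction step requires; and both, being boolean combinations of translates of $R$ by prime constants, are $\Delta_1$.
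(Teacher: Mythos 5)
Your first step contains a genuine error: the top-rank components of $R$ need not all be cosets of a single group, so the connected set-stabilizer does not do what you want. Modularity gives that $R$ is a boolean combination of cosets of $\acl(\emptyset)$-definable connected groups, say with top-rank part $\bigcup_i (\bar{b}_i + H_i)$, but the connected groups $H_i$ of maximal rank $k$ may be pairwise distinct; two distinct connected groups of the same rank intersect in a group of strictly smaller rank, so the stabilizer $\{ g : \RM((g+R)\triangle R) < k\}$ is contained in $\bigcap_i H_i$ and can have rank $<k$. For a concrete failure, take $M$ a $\mathbb{Q}$-vector space and $R = H_1 \cup H_2 \subseteq M^2$ with $H_1 = \{(x,x)\}$ and $H_2 = \{(x,x+x)\}$: the stabilizer is $\{(0,0)\}$, its cosets have rank $0$, and $R$ is certainly not a union of finitely many of them up to rank $<1$. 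The proposition does not require peeling off all of the top-rank part of $R$ at once. The paper's proof instead fixes a single group $A = H_{i_0}$ of maximal rank among those occurring in the decomposition and produces a finite-index supergroup $G \supseteq A$ together with one coset $X$ of $G$ covering only the $A$-cosets; the top-rank cosets of the other groups remain in $R \smallsetminus X$ and are handled by later induction steps, the degree having still dropped.

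Separately, you defer precisely the content that makes the proposition nontrivial. That $G$ is a boolean combination of finitely many translates of $R$ is not something $1$-basedness hands you for free; it is the entire technical body of the paper's proof, which you flag as ``the main obstacle'' but do not carry out. The paper does it in two stages: first, intersecting $R$ with translates $\bar{a}_\alpha + R$ for $\bar{a}_\alpha \in A$ chosen with $\bar{a}_\alpha - \bar{a}_{\alpha'} \notin H_i$ for the other groups $H_i$ kills everything outside the union $B$ of $A$-cosets, and finitely many further translates of that intersection recover $B$ exactly (a genericity argument in the sense of stable group theory); second, starting from the translate $B - \bar{b}_1 \supseteq A$, one repeatedly intersects it with its own translates by the $A$-coset representatives it contains until the result is closed under addition, yielding a group $G$ with $A \leq G \subseteq B - \bar{b}_1$ and $X := G + \bar{b}_1$. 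Without some such construction, your claim that $G$ is quantifier-free definable from $+$, $R$, and prime constants --- hence $\Delta_1$ --- is unsupported. Your device of naming a representative from the prime model via the $\emptyset$-definable set of top-rank points is fine in spirit and matches the paper's remark that all parameters may be chosen from the prime model, but it rests on the flawed identification of $G$.
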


The rest of this section constitutes the proof of this proposition. Since $T$ is modular, $R$ defines a finite boolean combination of cosets of $\acl(\emptyset)$-definable groups (Corollary. 4.8, \cite{BigPillay}). In the next lemma, we say the index of$\bar{b}_{ij}+H_{ij}$ in $\bar{b}_i+H_i$ for the index $[H_i\,:\,H_{ij}]$ noting that if $H_{ij}\leq H_i$ then any coset of $H_{ij}$ is completely contained in any coset of $H_i$ which it intersects.

\begin{lem}
There are $\acl(\emptyset)$-definable connected groups $H_i$ and $H_{ij}$ such that
 $$R = \bigcup_{i\leq m}\left((\bar{b}_i+H_i)\smallsetminus \bigcup_{j\leq k_i}(\bar{b}_{ij}+H_{ij})\right)$$
and $(\bar{b}_{ij}+H_{ij}) \leq (\bar{b}_i+H_i)$ have infinite index for all $i$, $j$.
\end{lem}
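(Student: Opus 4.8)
The plan is to take the given representation of $R$ as a boolean combination of cosets of $\acl(\emptyset)$-definable groups (Corollary 4.8 of \cite{BigPillay}) and massage it, through a finite sequence of purely group-theoretic manipulations, into the claimed normal form. Throughout I use two standing facts about cosets in a strongly minimal modular group: the connected component $K^0$ of an $\acl(\emptyset)$-definable group $K$ is again $\acl(\emptyset)$-definable (being characteristic in $K$, it has only finitely many conjugates over $\emptyset$) and has finite index in $K$; and a nonempty intersection of two cosets is a coset of the intersection of the two groups.

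First I would reduce to connected groups. Since each $K^0$ has finite index in $K$, every coset of $K$ is a finite disjoint union of cosets of the connected group $K^0$. Applying this to each coset appearing in the boolean combination, I may assume $R$ is a boolean combination of cosets of connected $\acl(\emptyset)$-definable groups $K_1, \dots, K_N$. Let $\mathcal{H}$ be the finite family of connected $\acl(\emptyset)$-definable groups obtained by closing $\{K_1, \dots, K_N\}$ under the operation $(H, H') \mapsto (H \cap H')^0$; every group produced below will lie in $\mathcal{H}$.

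Next I would put $R$ into disjunctive normal form over the $C_t := \bar{a}_t + K_t$, writing $R = \bigcup_i D_i$ with each $D_i = \bigcap_{t \in P_i} C_t \smallsetminus \bigcup_{t \in Q_i} C_t$ a boolean cell. Using the intersection fact, the positive part $\bigcap_{t \in P_i} C_t$ is either empty (discard the cell) or a coset of $\bigcap_{t \in P_i} K_t$; passing to the connected component and splitting into finitely many cosets, I may take the positive part to be a single coset $\bar{b}_i + H_i$ with $H_i \in \mathcal{H}$. Intersecting each negative coset with $\bar{b}_i + H_i$ and again passing to connected components, each surviving negative coset becomes a subcoset $\bar{b}_{ij} + H_{ij} \subseteq \bar{b}_i + H_i$ with $H_{ij} \in \mathcal{H}$ and $H_{ij} \leq H_i$. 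At this point $R$ has the shape claimed, except that some subcosets may have \emph{finite} index in their ambient coset.

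The last and most delicate step is to eliminate the finite-index subtractions, and this is where I expect the real work to be. If $\bar{b}_{ij} + H_{ij}$ has finite index in $\bar{b}_i + H_i$, I absorb it into the positive part: $\bar{b}_i + H_i$ is a finite disjoint union of cosets of $H_{ij}$, removing $\bar{b}_{ij} + H_{ij}$ simply deletes one of them (if the index is $1$ the whole cell is empty and is discarded), and I redistribute the remaining subtractions over the surviving $H_{ij}$-cosets by intersecting and again taking connected components. This replaces the cell by finitely many cells whose positive group $H_{ij}$ is \emph{strictly} smaller than $H_i$ but still lies in $\mathcal{H}$. Since $\mathcal{H}$ is finite and each absorption strictly shrinks the positive group, the process is well-founded and terminates; when it halts, every remaining subcoset has infinite index in its ambient coset, i.e.\ strictly smaller Morley rank, which is exactly the assertion of the lemma. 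The main thing to verify carefully is that this bookkeeping keeps all groups connected, $\acl(\emptyset)$-definable, and inside the fixed finite family $\mathcal{H}$, so that the descent argument genuinely applies.
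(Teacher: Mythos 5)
Your proof is correct and follows essentially the same route as the paper: invoke Corollary 4.8 of Pillay to write $R$ as a boolean combination of cosets of $\acl(\emptyset)$-definable groups, pass to disjunctive normal form using the fact that an intersection of cosets is a coset, intersect each negative coset into its positive cell, and split along connected components. The one place where you diverge is the step you flag as ``the most delicate'': eliminating finite-index subtractions by an absorption-and-descent argument over the finite family $\mathcal{H}$. That machinery is sound but vacuous, and the paper dispenses with it in one line. Once $H_i$ is connected and $H_{ij} \leq H_i$ is a definable subgroup of finite index, connectedness forces $H_{ij} = H_i$ (a connected group has no proper definable subgroup of finite index), so after you have intersected the negative coset into $\bar{b}_i + H_i$ the only possible finite index is $1$, in which case the cell is empty and is discarded. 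In other words, the reduction to connected ambient groups already guarantees that every surviving proper subcoset has infinite index; no well-founded descent on $\mathcal{H}$ is needed.
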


\begin{proof}
 Since the intersection of two cosets of groups is itself a coset of a group, this is just the disjunctive normal form. For the same reason, we may assume that each of the $\bar{b}_{ij} + H_{ij}$ are properly contained inside $\bar{b}_i+H_i$. If $H_i$ is not connected, we replace $(\bar{b}_i+H_i)\smallsetminus \bigcup_{j\leq k_i}(\bar{b}_{ij}+H_{ij})$ by the corresponding finite union of cosets; this ensures that $(\bar{b}_{ij}+H_{ij}) \leq (\bar{b}_i+H_i)$ cannot have finite index.
\end{proof}

Let $A$ be an $H_i$ of maximal Morley rank from the last lemma. This $A$ will be a finite index subgroup of the group $G$ in the proposition. In fact, $X$, the coset of $G$, will be the union of some of the cosets $\bar{b}_i + H_i$ where $H_i=A$. It is clear that the Morley (rank, degree) of $R\smallsetminus X$ and $X\smallsetminus R$ are each smaller than the Morley (rank, degree) of $R$.

It remains to produce a $\Delta_1$-definition of $G$. First we will take an intersection of translates of $R$ along $A$ to  remove the $H_{i'}$ where $H_{i'}\neq A$. Then we will take a union of translates of that intersection to fill in the missing pieces from the $H_{ij}$. Once done, we will have defined a union of cosets of $A$. This does not quite suffice, since we need a $\Delta_1$-definable group. After this, we will use the group-laws to strip away some of the remaining $A$-cosets until we have a single coset of a group.

To lighten notation, we reorder the expression in the last lemma to put all instances of $A$ at the beginning: for some $n \geq 1$, we have $H_i = A$ if and only if $i \leq n$.

\begin{notate}
For the remainder of the section, we will refer to the following:
 $$R = \bigcup_{i\leq m}\left(\bar{b}_i+H_i)\smallsetminus \bigcup_{j\leq k_i}(\bar{b}_{ij}+H_{ij})\right)$$
 $RM(H_i) \leq RM(A)$ for all $i$ \hspace{1cm} and \hspace{1cm} $H_i = A$ if and only if $i \leq n$
 $$B := \bigcup_{i\leq n}(\bar{b}_i+H_i) = \bigcup_{i\leq n}(\bar{b}_i+A)$$
\end{notate}

We first show that $B$ is quantifier-free definable from $R$ and a few new constants by showing that $B$ is a finite union of translates of a finite intersection of translates of $R$.

\begin{lem}
 (a) There are finitely many $\bar{a}_\alpha\in A$ such that
 $S := \bigcap_{\alpha}(\bar{a}_\alpha+R) \subseteq B$.

 (b) For any such $\bar{a}_\alpha\in A$, there are finitely many $\bar{e}_\beta\in A$ such that
 $B = \bigcup_{\beta} (\bar{e}_\beta+S)$

\end{lem}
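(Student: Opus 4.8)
The plan is to prove both (a) and (b) by \emph{well-founded descent on Morley rank}, exploiting that $A$ is a connected $\acl(\emptyset)$-definable group of rank $\RM(A)$ and that each proper constraint we impose strictly lowers the rank of the piece we are trying to eliminate. Two base observations drive everything. First, $R \setminus B$ has Morley rank strictly below $\RM(A)$: since $R \subseteq B \cup \bigcup_{i>n}(\bar{b}_i + H_i)$ and each $\bar{b}_i + H_i$ with $i>n$ has rank $\RM(H_i) < \RM(A)$, we get $R \setminus B \subseteq \bigcup_{i>n}(\bar{b}_i+H_i)$. Second, for any point $x \notin B$ the set $\{\bar{a} \in A : x - \bar{a} \in R\}$ has rank $< \RM(A)$: the map $\bar{a} \mapsto x - \bar{a}$ carries $A$ bijectively onto the coset $x+A$, which is \emph{not} one of the top cosets $\bar{b}_i + A$ (else $x \in B$), so it identifies this set with $(x+A)\cap R$, and $x+A$ is disjoint from every $\bar{b}_i + A$ with $i \le n$ and hence meets only the rank-$<\RM(A)$ part of $R$.

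For (a), set $\bar{a}_0 := 0$ and add the remaining $\bar{a}_\alpha$ one at a time. Write $W := \big(\bigcap_{\alpha \text{ so far}}(\bar{a}_\alpha + R)\big) \setminus B$ for the running ``bad set''; when $W \neq \emptyset$ I claim a generic $\bar{a} \in A$ over the finitely many parameters in play satisfies $\RM\big(W \cap (\bar{a}+R)\big) < \RM(W)$. Consider $Z := \{(\bar{a},x) : \bar{a} \in A,\ x \in W,\ x - \bar{a} \in R\}$. Projecting $Z$ to $W$, the second base observation bounds every fiber by some $s < \RM(A)$, so $\RM(Z) \le \RM(W) + s < \RM(W) + \RM(A)$. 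Projecting instead to $A$: were the generic fiber $W \cap (\bar{a}+R)$ to have rank $\ge \RM(W)$, the stratum of such $\bar{a}$ would be generic in $A$ and force $\RM(Z) \ge \RM(W) + \RM(A)$, a contradiction. Hence a generic $\bar{a} \in A$ strictly lowers $\RM(W)$. Since $\RM(W) \le \RM(R \setminus B) < \RM(A)$ is a natural number, finitely many steps drive $W$ empty, giving $S := \bigcap_\alpha(\bar{a}_\alpha + R) \subseteq B$; the chosen $\bar{a}_\alpha$ lie in $A$ and may be named by new constants.

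For (b), both inclusions localize to the top cosets. As each $\bar{a}_\alpha \in A$ fixes every $\bar{b}_i + A$ setwise, so does the passage $S \mapsto \bar{e}_\beta + S$ for $\bar{e}_\beta \in A$, whence $\bar{e}_\beta + S \subseteq \bar{e}_\beta + B = B$ and $\bigcup_\beta(\bar{e}_\beta+S) \subseteq B$ for free. For the reverse inclusion put $U_i := (\bar{b}_i + A) \setminus S$; a direct computation shows $S$ meets $\bar{b}_i + A$ in that coset minus a set of rank $< \RM(A)$, so $\RM(U_i) < \RM(A)$, and $(\bar{e}_\beta + S) \cap (\bar{b}_i + A) = (\bar{b}_i + A) \setminus (\bar{e}_\beta + U_i)$. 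Thus the union covers $\bar{b}_i + A$ precisely when $\bigcap_\beta(\bar{e}_\beta + U_i) = \emptyset$. This is again the descent of the previous paragraph: with $V$ the running intersection of the $\bar{e}_\beta + U_i$, the fibers of $\{(\bar{e},x) : \bar{e} \in A,\ x \in V,\ x - \bar{e} \in U_i\} \to V$ are full translates of $U_i$, so the identical rank arithmetic shows a generic $\bar{e} \in A$ strictly lowers $\RM(V)$ until the intersection is empty. Carrying this out for each $i \le n$ and unioning the finitely many resulting $\bar{e}_\beta$ yields one finite family with $B = \bigcup_\beta(\bar{e}_\beta + S)$.

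The main obstacle is the rank arithmetic: one must check that the fibers in each fibration have rank \emph{uniformly} below $\RM(A)$, so that a single generic translate — rather than infinitely many — produces a strict drop, and that the Lascar inequalities are used in the correct direction (an upper bound on $\RM(Z)$ via the ``bad'' base, played against the lower bound forced by a hypothetically generic top stratum in $A$). Everything else is bookkeeping; in particular, the same finite family $\{\bar{e}_\beta\}$ works simultaneously for all $\bar{b}_i + A$ with $i \le n$ because enlarging the family only shrinks each intersection $\bigcap_\beta(\bar{e}_\beta + U_i)$.
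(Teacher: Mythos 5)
Your descent machinery (generic translates played against the two Lascar rank inequalities) is sound in outline, but your ``first base observation'' is false as stated, and it is the only justification you give for the ``second base observation'' on which both descents rest. You assert that each $\bar{b}_i+H_i$ with $i>n$ has rank $\RM(H_i)<\RM(A)$, hence that $R\smallsetminus B$ has rank $<\RM(A)$. The setup only guarantees $\RM(H_i)\le\RM(A)$ and $H_i\neq A$ for $i>n$: nothing prevents the decomposition of $R$ from involving several \emph{distinct} connected groups of the same maximal rank (think of two different rank-one subgroups of $M^2$), in which case $R\smallsetminus B$ has full rank $\RM(A)$ and ``the rank-$<\RM(A)$ part of $R$'' does not contain it. This is precisely the case the paper's own proof is engineered to handle: its translates are chosen with $\bar{a}_\alpha-\bar{a}_{\alpha'}\notin H_i$ for $i>n$ exactly because a pure rank count does not kill those cosets. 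The claim you actually use does survive, but for a different reason: for $x\notin B$ the set $(x+A)\cap R$ is contained in $\bigcup_{i>n}\left((x+A)\cap(\bar{b}_i+H_i)\right)$, and each nonempty term is a coset of $A\cap H_i$, which has rank $<\RM(A)$ because $A$ and $H_i$ are distinct connected groups with $\RM(H_i)\le\RM(A)$ (a full-rank definable subgroup of the connected group $A$ has finite index, hence equals $A$, forcing $A\le H_i$ and then $A=H_i$). With that repair, and noting that termination of your descent needs only that $\RM(W)$ is a natural number rather than that it is $<\RM(A)$, both parts go through.

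Two further points. Your route is genuinely different from the paper's: for (a) the paper takes $m+1$ explicit translates pairwise incongruent modulo each $H_i$ ($i>n$) and empties the unwanted intersections by pigeonhole, and for (b) it shows $S$ contains a set generic in $B$ and invokes the stable-group fact that finitely many translates of a generic set cover the group; your uniform rank-descent replaces both, at the cost of needing genuinely generic translates. That cost matters here, because the enclosing proposition is stated as an expansion by \emph{prime} constants, so ``may be named by new constants'' is not enough: generic elements need not lie in the prime model. Since $\bigcap_\alpha(\bar{a}_\alpha+R)\subseteq B$ and the covering condition in (b) are first-order conditions on the translating tuples over parameters from the prime model, a witnessing tuple exists in the prime model once one exists at all; you should say so explicitly.
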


\begin{proof}
(a)
 We intersect $R$ with its translates, trying to trying to remove portions of $R$ which are not in $B$. Rather than working with $R$, we work with $R' := \bigcup_{i\leq m}(\bar{b}_i+H_i) = B \cup (\bigcup_{i > n}(\bar{b}_i+H_i))$; since $R \subseteq R'$, it clearly suffices to show that $\bigcap_{\alpha}(\bar{a}_\alpha+R') \subseteq B$. We let $\bar{a}_0 := 0$.
 Since $A$ is connected, $H_i\cap A$ has infinite index in $A$ for $i>n$. Thus, we can find $\bar{a}_\alpha \in A$ for $1 \leq \alpha \leq m+1$ (actually, $(m-n+1)$ suffices) such that $\bar{a}_\alpha - \bar{a}_{\alpha'} \notin H_i$ for all $i > n$ and all $\alpha \neq \alpha'$. The purpose of taking $\bar{a}_\alpha \in A$ is to ensure $\bar{a}_\alpha+B=B$. The purpose of taking
 $\bar{a}_\alpha - \bar{a}_{\alpha'} \notin H_i$ is to ensure that $(\bar{a}_\alpha + H_i) \cap (\bar{a}_{\alpha'} + H_i) = \emptyset$. We will show that $\bigcap_{\alpha}(\bar{a}_\alpha+R') \subseteq B$. The idea is that by translating generically enough along $A$, each of the intersections of the $H_i$ for $i>n$ will be empty. We claim that now $\bigcap_{\alpha}(\bar{a}_\alpha+R') \subseteq B$:

$$ \bigcap_{\alpha}(\bar{a}_\alpha+R')  \setminus B =
 \bigcap_{\alpha} \left( \left((\bar{a}_\alpha+B) \cup \bigcup_{i>n} (\bar{a}_\alpha + \bar{b}_i + H_i)\right)   \setminus B \right)=$$
$$= \bigcap_{\alpha} \left( \left( \bigcup_{i>n} (\bar{a}_\alpha + \bar{b}_i + H_i)\right) \setminus B \right) \subseteq
 \bigcap_{\alpha} \left( \bigcup_{i>n} (\bar{a}_\alpha + \bar{b}_i + H_i)\right)$$

Rewriting the last line as a union of intersections of $(m+1)$ terms of the form $(\bar{a}_\alpha + \bar{b}_i + H_i)$, we see that in each intersection, some $\bar{b}_i+ H_i$ occurs more than once, with different $\bar{a}_\alpha$'s, rendering the intersection empty. The first claim of the lemma is now proved.

(b)
For the second claim, let
 $$B_0 : = \bigcup_{i\leq n}\left((\bar{b}_i+H_i)\smallsetminus \bigcup_{j\leq k_i}(\bar{b}_{ij}+H_{ij})\right)$$
As $B_0\subseteq R$, we have that $S_0 :=\bigcap_{\alpha}(\bar{a}_\alpha+B_0) \subseteq  \bigcap_{\alpha}(\bar{a}_\alpha+R)=S\subseteq B$. For each $\alpha$, $B \setminus (\bar{a}_\alpha+B_0)=\bigcup_{j\leq k_i}(\bar{a}_\alpha+\bar{b}_{ij}+H_{ij})$ is a finite union of cosets of proper subgroups of $A$. So $B \setminus S_0$ is contained in $\bigcup_{j\leq k_i,\alpha}(\bar{a}_\alpha+\bar{b}_{ij}+H_{ij})$. In particular, $S$ is generic in $B$ in the sense of stable group theory (\cite{BigPillay}, Lemma 6.12), so finitely many translates of it cover $B$. 
\end{proof}

To complete the proof of the proposition, we need to obtain the desired finite-index supergroup $G$ of $A$ which is $\Delta_1$-definable in terms of $B$, a finite union of cosets of $A$.

 We take a translate $B - \bar{b}_1$ of $B$ that contains $A$ and successively remove cosets of $A$ until we get a group. In the end, we get a group $G$ such that $A \leq G \subseteq B - \bar{b}_1$, and its translate $X := G+\bar{b}_1$ satisfies the proposition.

 Here is how we begin the induction: Let $G_0 := B - \bar{b}_1$ be a translate of $B$ that contains $A$; let $\bar{c}_i : = \bar{b}_i - \bar{b}_1$ and let $I_0 := \{ 1, \ldots, n \}$ so that $G_0 = \bigcup_{i \in I_0} (A + \bar{c}_i)$.

 Here is the inductive assumption: $B - \bar{b}_1 = G_0 \supseteq G_1 \supseteq G_2 \ldots \supseteq A$ are finite unions of cosets of $A$, and $G_{r+1} \subsetneq G_r$ unless $G_r$ is a group.

 And here is the induction step: let $I_{r} \subset \{ 1, \ldots, n \}$ be such that $G_{r} = \bigcup_{i \in I_{r}} (A + \bar{c}_i)$, and let $G_{r+1} := \bigcap_{i \in I_r} G_r - c_i$. 

 It is clear that $G_{r+1}$ is still a union of cosets of $A$. If $G_{r+1}=G_r$, then $G_{r}$ is a finite union of cosets of $A$ which is closed under addition, thus a group.

All of the parameters used here may be chosen from a prime model, so by an expansion by prime constants, we have that $G$ is $\Delta_1$-definable. We are now done proving the proposition.

\subsection{Corollaries of the reduction}\label{cories}

Here we collect some recursive model theoretic corollaries of Theorem \ref{thm-red-to-abstructs}. Along with the fact that in any theory of abelian structures every formula is equivalent to a boolean combination of pp-formulae, an explicit axiomatization of the theory of abelian structures is given in Blossier and Bouscaren (\cite{BnB}, Cor. 2.4). We obtain the following three corollaries which hold \textbf{even for theories with infinite signatures}. These can be seen as analogs of similar results about disintegrated theories in \cite{GHLLM}. Recall that almost model complete theories are those where every formula is equivalent to a boolean combination of existential formulae.

\begin{cory}
Every modular strongly minimal group is almost model complete after naming constants for a model.
\end{cory}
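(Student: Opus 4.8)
The plan is to transport the pp-elimination available in abelian structures back to the modular group along the quantifier-free translations supplied by Theorem \ref{thm-red-to-abstructs}. Recall that a \emph{pp}-formula is built from atomic formulae by conjunction and existential quantification, so it is in particular existential, and recall the fact quoted above that in any theory of abelian structures every formula is equivalent to a boolean combination of pp-formulae. Hence it suffices to push an arbitrary $L$-formula into the interdefinable abelian structure $T'$, eliminate it down to a boolean combination of pp-formulae there, and pull the result back to $L$, verifying that the return trip produces only existential $L$-formulae.

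First I would name constants for a model $M$; this names the finitely many parameters occurring in the cosets and translates used by the two translations — all of which are drawn from the prime model in the proof of Theorem \ref{thm-red-to-abstructs} — and, crucially, names their inverses as well, so that each coset $\bar b + G_j$ and each translate $\bar b + R$ is given by a \emph{quantifier-free} formula (in $L'$, respectively $L$). Given an $L$-formula $\phi$, the first clause of Theorem \ref{thm-red-to-abstructs} lets me replace each $L$-relation in $\phi$ by a boolean combination of cosets of $L'$-groups, yielding an equivalent $L'$-formula $\phi'$; pp-elimination in $T'$ then rewrites $\phi'$ as a boolean combination of pp-formulae $\theta_s$.

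Now I would pull back. Each $\theta_s$ has the form $\exists \bar y\, \bigwedge_u \alpha_u(\bar x, \bar y)$ with the $\alpha_u$ atomic in $L'$; by the second clause of Theorem \ref{thm-red-to-abstructs} each $\alpha_u$ is equivalent to a boolean combination of (quantifier-free) translates of $L$-relations, so substituting turns $\theta_s$ into an existential $L$-formula, the negated occurrences simply becoming negations of existential formulae, which a boolean combination is allowed to contain. Thus $\phi$ is equivalent, over the named constants, to a boolean combination of existential $L$-formulae, which is precisely almost model completeness. The only delicate point, and the sole place the argument could fail, is this return trip: it is essential that the $L$-translation of the $L'$-symbols be positive and quantifier-free (or at worst positive existential), so that substituting it beneath the single existential block of a pp-formula cannot manufacture universal quantifiers or quantifiers trapped inside a negation; this is exactly what the translates guarantee, and it is why naming a model (hence the inverses of the parameters) is needed. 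Finally, for the asserted extension to infinite signatures one observes that the argument is purely formula-by-formula and uses only two signature-insensitive facts — that in a modular group every formula is a boolean combination of cosets of pp-definable subgroups, and that pp-elimination holds in abelian structures — together with the elementary remark that a coset of a pp-definable subgroup is existentially definable.
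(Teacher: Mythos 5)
Your argument is correct and follows essentially the same route as the paper: restrict attention to the (finite) subsignature of the symbols in $\phi$, translate quantifier-freely into the interdefinable abelian structure via Theorem \ref{thm-red-to-abstructs}, apply pp-elimination there, and pull back, noting that substituting quantifier-free $L$-definitions of the $L'$-atoms under the existential block of a pp-formula keeps the result existential. Your extra care about naming inverses of the parameters and about negated pp-formulae is sound but is exactly what the paper's shorter proof implicitly relies on.
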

\begin{proof}
Given a formula $\phi$, we can restrict to the subsignature of $L$ consisting of only the non-logical symbols occurring in the formula $\phi$. Our analysis gives a quantifier-free translation of $T\vert_L$ to an abelian structure using the constants from a model. In this abelian structure, every formula is equivalent to a boolean combination of existential formulae, and every atomic formula in the abelian structure is quantifier-free definable in $T$. Thus $\phi$ is equivalent to a boolean combination of existential formulae in $T$.
\end{proof}

\begin{cory}\label{Cory-Two-Jumps}
Let $T$ be the theory of a strongly minimal modular group with a recursive model. Then $T\leq_T 0''$, that is $T$ is Turing reducible to the set of true $\exists\forall$-sentences in $(\mathbb{N},+,\cdot)$.
\end{cory}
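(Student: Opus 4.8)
The plan is to reduce the decision problem for $T$ to evaluating, in a fixed recursive model, a uniformly computable finite boolean combination of \emph{invariant sentences} of the form ``$[\phi(M):\psi(M)] \geq k$'' for positive primitive $\phi,\psi$, each of which I will argue is a $\Sigma_2^0$ condition and hence decidable from $0''$. The guiding observation is that although a crude complexity count of the Baur--Monk invariants (deciding whether an index is \emph{finite}) would only give a $0'''$ bound, a single sentence reduces to a boolean combination with \emph{fixed} thresholds $k$, so the problematic unbounded search is never invoked.

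First I would pass from $T$ to the $\Delta_1$-interdefinable theory $T'$ of abelian structures supplied by Theorem \ref{thm-red-to-abstructs}. Let $M$ be the given recursive model of $T$; expanding $M$ by the finitely many $\Delta_1$-definable group relations of $T'$ and by the finitely many prime constants used in the reduction yields a recursive model $M'$ of $T'$. Since each relation symbol of $T$ is, over $T'$, a boolean combination of cosets of groups of $T'$, and conversely each relation of $T'$ is a boolean combination of translates of relations of $T$, the translation of $L$-sentences into $L'$-sentences (and back) is given by a recursive function, so $\Th(M) \equiv_T \Th(M')$ and it suffices to show $\Th(M') \leq_T 0''$.

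Next I would invoke the Baur--Monk-style elimination for abelian structures underlying the explicit axiomatization of \cite[Cor.\ 2.4]{BnB}, together with the fact that in an abelian structure every formula is equivalent to a boolean combination of pp-formulae. The content I need is \emph{uniform}, i.e.\ independent of the particular model: there is a recursive function $\sigma \mapsto B(\sigma)$ sending each $L'$-sentence to a finite boolean combination $B(\sigma)$ of invariant sentences ``$[\phi:\psi] \geq k$'' (with pp-formulae $\psi \leq \phi$ defining subgroups and $k \in \omega$) such that the base theory of abelian structures in this signature proves $\sigma \leftrightarrow B(\sigma)$; in particular $M' \models \sigma$ iff $M' \models B(\sigma)$. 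I would then bound the complexity of the invariant sentences in the recursive model $M'$: every pp-formula defines a set that is recursively enumerable in the (recursive) atomic diagram, so ``$a \in \phi(M')$'' is $\Sigma_1^0$ and ``$a - b \notin \psi(M')$'' is $\Pi_1^0$. Hence ``$[\phi(M'):\psi(M')] \geq k$,'' asserting the existence of $k$ elements of $\phi(M')$ pairwise incongruent modulo $\psi(M')$, is $\Sigma_2^0$, and so is decidable from $0''$. Evaluating the finitely many invariant sentences of $B(\sigma)$ with a $0''$-oracle and combining them via the boolean structure of $B(\sigma)$ decides $M' \models \sigma$ uniformly in $\sigma$, giving $\Th(M') \leq_T 0''$ and hence $T \leq_T 0''$, where $0'' \equiv_T$ the set of true $\exists\forall$-sentences of $(\mathbb{N},+,\cdot)$ as in the statement.

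The main obstacle is the uniform elimination of the third paragraph: one must verify that the abelian-structure analogue of Baur--Monk produces, recursively in $\sigma$ and independently of the model, a finite boolean combination of \emph{bounded} invariant sentences ``$\geq k$,'' so that the decision procedure never has to distinguish a finite index from an infinite one (a genuinely $\Pi_3^0$ question that would only yield $0'''$). It is exactly this boundedness, rather than the Morley-rank geometry of the strongly minimal group, that is responsible for the sharp $0''$ bound; managing the finitely many parameters and prime constants introduced by the reduction of Theorem \ref{thm-red-to-abstructs}, and checking that they do not affect the Turing degree of the theory, is a routine secondary point.
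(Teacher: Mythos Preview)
Your approach is correct in outline and is genuinely different from the paper's. The paper argues that $0''$ can \emph{enumerate} the explicit Blossier--Bouscaren axiomatization of the translated abelian structure (equivalence sentences between pp-formulae are $\forall\exists$ in the recursive model, dimension sentences $[H':H]=k$ and $[H':H]\geq k$ are decided by searching for witnesses, and congruences among constants are existential), and then appeals to completeness of $T$ to turn an enumerable axiom set into a decision procedure. You instead invoke the uniform Baur--Monk elimination to reduce each sentence directly to a boolean combination of bounded invariant conditions $[\phi:\psi]\geq k$, each $\Sigma_2^0$ in the recursive model. Your route is more self-contained and makes the $0''$ bound transparent; the paper's route stays closer to the quoted axiom list and dovetails with the sharper $0'$ bound proved next.

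Two points deserve attention. First, the corollary is explicitly meant to hold for \emph{infinite} signatures (the paper flags this), but Theorem~\ref{thm-red-to-abstructs} is a finite-signature result, so your global passage to $T'$ is not available as stated. The fix, which the paper carries out, is to restrict for each input sentence $\phi$ to the finite subsignature of symbols occurring in $\phi$ and apply the reduction there; your Baur--Monk argument then goes through unchanged on that reduct. Second, your claim that $\Th(M)\equiv_T\Th(M')$ via a ``recursive'' translation is too quick: Theorem~\ref{thm-red-to-abstructs} asserts existence of the interdefinable abelian structure, not that the defining formulae and prime constants can be found computably from the atomic diagram. The paper notes that $0'$ suffices to \emph{identify} the $\Delta_1$-formulae defining the groups; since you have $0''$ at hand this is harmless, but it should be said explicitly rather than folded into a recursive translation.
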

\begin{proof}
Given a sentence $\phi$, we need to show that $0''$ can determine whether $\phi\in T$. We first restrict to the reduct of $T$ to the finite signature comprised of symbols occurring in $\phi$. Using $0''$, we can identify some sequence of $\Delta_1$-formulae $\psi_i$ which define groups such that the original relations are boolean combinations of cosets of the $\psi_i$. In fact $0'$ suffices for this step, as we will use in Corollary \ref{Cory-One-Jump}. Thus, $0''$ need only be able to enumerate the axiom list provided by Blossier and Bouscaren (\cite{BnB}, Cor. 2.4) in this new language. We include that list here:
\begin{itemize}
\item (Abelian Groups) The axioms of commutative groups
\item (Abelian Structure) For each $H$ in the signature, the axiom that states that $H$ is a subgroup of $M^{\text{arity}(H)}$.
\item (Equivalence Sentences) Sentences of the form $\forall \bar{x}\left(\phi(\bar{x})\leftrightarrow \psi(\bar{x})\right)$ where $\phi$ and $\psi$ are pp-forumulae which define the same group.
\item (Dimension Sentences) For each pair $H\subseteq H'$ of pp-definable subgroups of $M$, such that the index of $H$ in $H'$ is equal to $n$, the sentence ``$[H':H]=n$''. For each pair $H\subseteq H'$ of pp-definable subgroups of $M$ such that the index of $H$ in $H'$ is infinite, the infinite scheme of sentences ``$[H':H]\geq k$'', for every $k\geq 1$. 
\item (Constants) For each pp-definable $H$, $H$-congruences and non-$H$-congruences between tuples of constants.
\end{itemize} 

Given two pp-formulae, it is $\forall\exists$ using the recursive model $M$ to declare that they are equivalent, thus $0''$ can compute which equivalence sentences are true. Similarly, upon finding $k$ tuples  $x_1,\ldots, x_k$ from $M$ such that $\bigcup_{i\leq k} (x_i + H)=H'$, $0''$ can enumerate the axiom $[H':H]=k$. Similarly, upon finding $k$ tuples $x_1,\ldots, x_k$ from $M$ such that $\bigcup_{i\leq k} (x_i + H) \subseteq H'$, $0''$ can enumerate the axiom $[H':H]\geq k$. This takes care of the dimension sentences, and congruences between constants are existentially defined, thus $0''$ can determine these as well.
\end{proof}

In fact, a more careful analysis yields a stronger result in the case that the recursive model is of positive dimension. This is surprising, since this result does not hold for disintegrated theories. In fact, there exists a disintegrated strongly minimal theory $T$ all of whose models admit recursive presentations, yet $T\equiv_T 0''$.

\begin{cory}\label{Cory-One-Jump}
Let $T$ be a strongly minimal modular group with a recursive model of positive dimension. Then $T\leq_T 0'$. 
\end{cory}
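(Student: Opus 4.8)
The plan is to exploit that $T$ is complete: if I can produce an axiomatization of $T$ that is recursively enumerable in $0'$, then $T$ itself is r.e.\ in $0'$ (enumerate consequences of the axioms), and by completeness its complement is too, so $T\leq_T 0'$. As in the proof of Corollary \ref{Cory-Two-Jumps}, I would handle a given sentence by restricting to the finite subsignature of the symbols occurring in it, pass via Theorem \ref{thm-red-to-abstructs} to an interdefinable abelian structure, and use the Blossier--Bouscaren axiomatization (\cite{BnB}, Cor. 2.4). The step of identifying $\Delta_1$ group-definitions $\psi_i$ needs only $0'$, as already noted there, and the abelian-group, abelian-structure, and constant-congruence axioms are enumerable with $0'$. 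The only axioms that forced the second jump in Corollary \ref{Cory-Two-Jumps} are the \emph{equivalence sentences} (two pp-formulae define the same subgroup) and the \emph{dimension sentences} (the value of an index $[H':H]$), both of which are $\forall\exists$ as stated.

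The key observation is that, for pp-definable subgroups $H\subseteq H'$, these two questions reduce to Morley-rank and finite-index computations rather than to quantification over the whole model. Since a connected group has no proper definable subgroup of finite index, for connected $H$ one has $H\subseteq H'$ if and only if $\RM(H\cap H')=\RM(H)$, while $[H':H]$ is finite if and only if $\RM(H)=\RM(H')$ and infinite exactly when $\RM(H)<\RM(H')$; a general $H$ is a finite union of cosets of its connected component, which is itself a finite-index pp-subgroup, so everything reduces to the connected case. Thus it suffices to compute, with $0'$, the Morley ranks and finite indices of pp-definable subgroups, which eliminates the troublesome universal quantifier over the elements of $M$.

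Positive dimension enters exactly here. A single generic element $e$ exists precisely because $\dim M\geq 1$, and it lets me compute the quasiendomorphism ring $R$ using $0'$: each quasiendomorphism acts on $e$ with value in $\acl(e)\subseteq M$, and equality, addition, and composition in $R$ become $\Sigma_1$ algebraic-dependence conditions on these values, hence are decidable from $0'$. Because the abelian structure is coordinatized by $R$ acting on the vector space $V:=M/G_0$, every pp-definable subgroup is, modulo the finite part $G_0$, an $R$-subspace cut out by $R$-linear equations with quasiendomorphism coefficients; containment, Morley rank, and finite index then reduce to linear algebra over $R$ together with finite bookkeeping about $G_0$. This linear algebra is decidable from the $0'$-computable ring data and never inspects all of $M$. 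This is also why the hypothesis is needed and why the analogue fails both for a dimension-$0$ (prime) model and for disintegrated theories: with no generic element there is no $e$ from which to read off $R$, and one is thrown back on the $\forall\exists$ tests that cost $0''$.

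The hard part will be making this coordinatization genuinely effective and uniform: I must show that the full pp-subgroup lattice and all of its indices are $0'$-computably recoverable from the action of quasiendomorphisms on the single element $e$, \emph{even when} $\dim M=1$, so that $M$ contains no generic point of any pp-subgroup of Morley rank exceeding $1$ and the naive ``find a generic witness in $M$'' strategy is unavailable. This forces me to separate the finite ($G_0$) and vector-space parts carefully, to treat an arbitrary (possibly infinite) division ring $R$ and the attendant torsion and characteristic issues when reading off the exact value of a finite index, and to enumerate the relevant quasiendomorphisms and their relations within $0'$ by using finite generation of $R$ rather than a direct $\Pi_2$ surjectivity test.
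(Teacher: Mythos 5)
Your skeleton matches the paper's: restrict to the finite subsignature of the given sentence, translate to an abelian structure with $0'$ (the paper notes the groups are boolean combinations of translates of quantifier-free definable sets, so $0'$ suffices for this step), and then enumerate the Blossier--Bouscaren axioms, with the whole point being that the generic element supplied by positive dimension lets you drop the troublesome $\forall\exists$ tests for the equivalence and dimension sentences down a jump. That diagnosis is exactly right, and so is your explanation of why dimension $0$ blocks the argument.

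Where you diverge, there is a genuine gap. Your mechanism for exploiting the generic element is to compute the quasiendomorphism ring $R$ from its action on $e$ and then recover the whole pp-subgroup lattice, with ranks and indices, by linear algebra over $R$ modulo $G_0$. You correctly flag that this coordinatization is the hard part --- and it is essentially the content of the paper's entire Section 4 (the reduction to binary strongly minimal relations, the word calculus for quasiendomorphisms, and the row-reduction showing every pp-definable correspondence is a finite-index subgroup of one defined by a word). None of that is available at this point in the paper, and you do not supply it; in particular, ``containment, Morley rank, and finite index reduce to linear algebra over $R$'' is asserted, not proved, and the exact value of a finite index lives in the finite part you defer to ``bookkeeping.'' The paper's actual proof avoids all of this with a direct test-point argument: with $a\in M\smallsetminus\acl(\emptyset)$ fixed, $\RM(H)\geq k$ iff some $k$-coordinate projection of $H$ contains each of $(a,0,\ldots,0)$ through $(0,\ldots,0,a)$, an existential condition decidable from $0'$; and two pp-definable groups of the same rank are equal iff the (finite) fibers over $\bar 0$ and over the permutations of $(a,0,\ldots,0)$ under a common surjective projection coincide, because every element is a sum of two generics and each group is the sum of these fibers. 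That replaces the universal quantifier over $M$ by finitely many $\Sigma_1$ membership queries at points built from $a$, which is all that is needed to enumerate the axiom list. So your write-up identifies the right idea but routes it through unestablished machinery where a short elementary argument suffices.
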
 
\begin{proof}
We fix a recursive model $M\models T$ of positive dimension and an element $a\in M\smallsetminus \acl(\emptyset)$. Given a sentence $\phi$, we again must determine whether $\phi \in T$. We again restrict to the subsignature generated by the non-logical symbols occurring in $\phi$. Since the groups in the interdefinable abelian structure are in fact boolean combinations of translates of quantifier-free definable sets in $T$, $0'$ can identify these, thus performing the translation to an abelian structure. Again, $0'$ needs to be able to enumerate the axioms from (\cite{BnB}, Cor. 2.4). This is done by performing an analysis of a given pp-definable group as follows.

\begin{claim}[1]
For any pp-definable group $H$, $0'$ can (uniformly) determine the rank of $H$. 
\end{claim} 
\begin{proof}
It suffices to show that $0'$ can determine whether or not the rank of $H$ is $\geq k$. The rank of $H$ is $\geq k$ if and only if the projection of $H$ onto some $k$ coordinates is $M^k$. This is equivalent to the projection of $H$ onto some $k$ coordinates containing each of the $k$ elements $(a,0,\ldots, 0)$ through $(0,\ldots, 0, a)$. The projection is an existential formula, so $0'$ can determine whether this is true. 
\end{proof}
\begin{claim}[2]\label{equalgroups}
For any two pp-definable groups $H$ and $H'$, $0'$ can determine if $H=H'$.
\end{claim}
\begin{proof}
Firstly, $0'$ verifies that the rank of $H$ equals the rank of $H'$ and that there are $k$ coordinates such that both $H$ and $H'$ project onto $M^k$ via those coordinates. This is done as above. If this is so, we let $\pi$ be the projection map onto these fixed $k$ coordinates. $0'$ then determines the size of $\pi^{-1}(0,0,\ldots, 0)\cap H$ and $\pi^{-1}(0,0,\ldots, 0) \cap H'$. If these are not equal, then certainly $H\neq H'$. If they are equal, then $0'$ determines whether they are equal as sets. Finally, $0'$ determines whether $\pi^{-1}(a,0,\ldots, 0)\cap H = \pi^{-1}(a,0,\ldots,0)\cap H'$, and similarly for each permutation of $(a,0,\ldots, 0)$. Knowing the size of these sets, $0'$ then verifies whether they are equal. If these are equal, then we show $H=H'$. Since any element can be written as the sum of two generics, we have that $\pi^{-1}(x,0,\ldots,0)\cap H = \pi^{-1}(x,0,\ldots, 0)\cap H'$ for any $x$. Similarly for any permutation of $(x,0,\ldots, 0)$. Finally, as we show below, $H$ and $H'$ are the sum of all these pre-images, so $H=H'$.
\end{proof}

Using this claim, $0'$ can now enumerate the axiom list exactly as in Corollary \ref{Cory-Two-Jumps}.
\end{proof}

\section{Strongly minimal abelian structures}

 In this section, we continue defining new signatures and new theories, taking care not to change the spectrum. The first two reductions, to a language where all relation symbols define strongly minimal subgroups of $M^n$, are $\Delta_1$-interdefinable reductions. We then, as in the case of disintegrated theories, use the trace of our signature on a binary language to determine algebraicity. First, by analyzing algebraic formulae, we will verify that algebraic closure is the same in this new language. Then we will use the binary language to get a natural presentation of the quasiendomorphism ring of the structure. We will do this by first showing that we can recursively present the ring of quasiendomorphisms generated by those explicitly in the language, and then showing that every quasiendomorphism is represented by one of these.

  Now that we are inside an abelian structure, we have two nice quantifier elimination results.

\begin{fact}
 \begin{enumerate}
\item Every formula is equivalent to a Boolean combination of pp-formulae. (\cite[Fact 2.3]{BnB})
\item Every definable (with parameters) connected group is pp-definable over $\emptyset$. (\cite[Proposition 2.6]{BnB})
\end{enumerate}\end{fact}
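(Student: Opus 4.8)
The plan is to prove the two clauses in sequence, obtaining (2) as a consequence of (1). Clause (1) is the Baur--Monk pp-elimination theorem specialized to abelian structures, and I would establish it by eliminating a single existential quantifier from a Boolean combination of pp-formulae. Since pp-formulae are closed under conjunction and existentials distribute over disjunctions, after putting a formula in disjunctive normal form it suffices to analyze
$$\exists y\,\Big(\phi(\bar x,y)\wedge\bigwedge_{j\le m}\neg\psi_j(\bar x,y)\Big),$$
where $\phi$ and the $\psi_j$ are pp; absorbing $\phi$ into each $\psi_j$ I may further assume every $\psi_j$ implies $\phi$. For a fixed value of $\bar x$ the sets defined in the variable $y$ by $\phi$ and by $\psi_j$ are cosets of the $\emptyset$-definable subgroups $\phi^\ast:=\phi(\bar 0,y)$ and $\psi_j^\ast:=\psi_j(\bar 0,y)$, with $\psi_j^\ast\le\phi^\ast$. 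Thus the existential sentence asserts exactly that a coset of $\phi^\ast$ is \emph{not} covered by the finitely many cosets of the subgroups $\psi_j^\ast$.

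The combinatorial heart is then B.~H.~Neumann's covering lemma: a coset of $\phi^\ast$ is covered by finitely many cosets of subgroups $\psi_j^\ast\le\phi^\ast$ if and only if it is already covered by those $\psi_j^\ast$ of finite index in $\phi^\ast$, and whether such a finite-index covering occurs is governed only by the indices $[\phi^\ast:\psi_j^\ast]$ together with the congruence pattern of the relevant cosets. The congruence conditions are themselves expressible by pp-formulae (and their negations) in $\bar x$, while the index conditions are \emph{invariant sentences} not depending on $\bar x$. Translating ``not covered'' through this dichotomy exhibits the existential formula as a Boolean combination of pp-formulae in $\bar x$ and invariant sentences; since we work in a complete theory each invariant sentence is decided, hence equivalent to $\top$ or $\bot$, and only a Boolean combination of pp-formulae survives. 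Iterating the quantifier elimination proves (1).

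For clause (2), let $H\le G^n$ be a definable connected (that is, $\dM(H)=1$) subgroup, defined with parameters. By (1), $H$ is a Boolean combination of cosets of finitely many $\emptyset$-pp-definable subgroups $P_1,\dots,P_r$; setting $P:=\bigcap_s P_s$, one checks that the truth value of the defining formula is constant on each coset of $P$ (because $P\le P_s$ forces each coset of $P$ to lie entirely inside or entirely outside each coset of $P_s$), so $H$ is a union of cosets of the $\emptyset$-pp-definable subgroup $P$, and since $0\in H$ we get $P\le H$. The goal of the remaining argument is to produce an $\emptyset$-pp-definable subgroup $P'\le H$ of \emph{finite} index: granting this, connectedness of $H$ forbids any proper definable finite-index subgroup, forcing $H=P'$, which is pp over $\emptyset$ as required.

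The main obstacle is exactly this finite-index extraction, since a priori $H$ may be a union of infinitely many $P$-cosets. Here I would invoke the modularity (one-basedness) of abelian structures, for which every definable subgroup is commensurable with an $\emptyset$-definable subgroup (the Hrushovski--Pillay analysis of one-based groups; see \cite{hrupilomo}). Choosing an $\emptyset$-definable $K$ commensurable with $H$, its connected component $K^0$ is again $\emptyset$-definable and connected, and since $[K:K^0]$ and $[H:H\cap K]$ are finite while $K^0$ is connected, $K^0$ lands inside $H$ with finite index. Applying (1) together with the covering lemma to the $\emptyset$-definable connected group $K^0$ shows $K^0$ is pp over $\emptyset$, and connectedness of $H$ then yields $H=K^0$. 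The delicate points are establishing the commensurability from modularity and squeezing a genuine pp-definition out of an $\emptyset$-definable connected group through the Neumann covering combinatorics; everything else is bookkeeping with cosets and indices.
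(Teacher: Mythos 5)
The paper does not prove this Fact; it imports both clauses from Blossier--Bouscaren, so any comparison is really an assessment of your argument on its own terms. Your clause (1) is the standard Baur--Monk pp-elimination (reduce to $\exists y(\phi\wedge\bigwedge\neg\psi_j)$, note that the fibers are cosets of $\phi(\bar 0,y)$ and $\psi_j(\bar 0,y)$, apply B.~H.~Neumann's covering lemma, and absorb the resulting invariant index sentences into the complete theory); that sketch is correct and is exactly how the cited Fact 2.3 is obtained.

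Clause (2) has a genuine gap at the step you yourself flag as delicate. Your reduction correctly isolates the real problem: producing an $\emptyset$-pp-definable subgroup of \emph{finite index} in $H$ (after which connectedness finishes). But the proposed solution does not supply it. First, Hrushovski--Pillay one-basedness yields commensurability with an $\acl(\emptyset)$-definable subgroup, not an $\emptyset$-definable one, and one-basedness of abelian structures is itself normally derived from clause (1), so the detour adds nothing you don't already have from the pp-elimination (whose subgroup parts $\theta(\bar x,\bar 0)$ are $\emptyset$-pp regardless of the parameters of $H$). Second, and more seriously, your closing sentence --- ``applying (1) together with the covering lemma to the $\emptyset$-definable connected group $K^0$ shows $K^0$ is pp over $\emptyset$'' --- is circular: for a parameter-free connected group the same obstruction recurs, namely that the intersection $P$ of the pp-subgroups appearing in its Boolean combination may have infinite index in it, and Neumann's lemma alone only gets you a pp-definable group $Q$ with $H\le Q$ and $[Q:H]<\infty$, i.e.\ $H=Q^0$, not $H$ pp-definable. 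The missing ingredient is the descending chain condition on $\emptyset$-pp-definable subgroups coming from total transcendence ($\omega$-stability) of the strongly minimal abelian structure: it guarantees that the intersection of all finite-index $\emptyset$-pp-definable subgroups of $Q$ is a \emph{finite} intersection, hence $\emptyset$-pp-definable and of finite index, and (using the Boolean-combination form of an arbitrary definable finite-index subgroup plus Neumann again) equal to $Q^0$. Without the DCC, or some substitute for it, the finite-index extraction never terminates and the proof of clause (2) does not close.
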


\subsection{First reductions}

\begin{lem} \label{wlogcon}
 For every strongly minimal theory $T$ of abelian structures in a finite signature $L$, there is a strongly minimal theory $\tilde{T}$ of abelian structures in a finite signature $\tilde{L}$ such that \begin{itemize}
 \item $T$ and $\tilde{T}$ have the same recursive spectrum.
 \item $\tilde{T}$ proves that each relation symbol in $\tilde{L}$ is a \emph{connected} group.
 \item $L$ and $\tilde{L}$ have the same number of relation symbols of each Morley rank.
\end{itemize}
\end{lem}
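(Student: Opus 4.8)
The plan is to replace each relation symbol $H$ of $L$ (which $T$ proves to be a subgroup of some $G^{n}$) by a symbol for its connected component $H^{0}$, while recording finitely many coset representatives as new constants. Since $[H:H^{0}]=\dM(H)$ is finite, $H^{0}$ and $H$ have the same arity and the same Morley rank, so the assignment $H\mapsto H^{0}$ is a bijection on relation symbols preserving the number of each Morley rank, and each $H^{0}$ is by construction connected. Thus all three bullets are immediate once we know the change of signature does not affect the recursive spectrum. The one danger is that recovering $H$ from $H^{0}$ forces us to name coset representatives, and naming arbitrary (generic) elements would change the spectrum; so the heart of the argument is to produce coset representatives lying in the prime model.

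To locate such representatives I would use the decomposition $G=G_{0}\oplus G/G_{0}$ recalled above, under which every $\emptyset$-definable subgroup splits as $H=H(G_{0})\oplus H(V)$, where $V:=G/G_{0}$, $H(G_{0})=H\cap G_{0}^{n}$, and $H(V)$ is the image of $H$ in $V^{n}$. The key point is that $H(V)$ is already connected: in $V^{n}$ the $\emptyset$-definable subgroups are $R$-rational subspaces, each definably isomorphic to a power $V^{k}$ of the strongly minimal (hence connected) set $V$, and a proper inclusion of two such subspaces has quotient isomorphic to a nonzero power of $V$, which is infinite; hence none of them has a proper finite-index definable subgroup. Consequently the connected component of $H$ is $H^{0}=H(G_{0})^{0}\oplus H(V)$, and the finite quotient is $H/H^{0}\cong H(G_{0})/H(G_{0})^{0}$. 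In particular a full set of coset representatives of $H^{0}$ in $H$ can be chosen inside $H(G_{0})\subseteq G_{0}^{n}=\acl(\emptyset)^{n}$. These representatives are algebraic over $\emptyset$, so they lie in the prime model; naming them is an expansion by prime constants, which does not change the spectrum by Proposition~\ref{constantscanthurt}.

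With the constants $\bar c^{(i)}_{0}=0,\dots,\bar c^{(i)}_{d_i-1}$ in hand, I would let $\tilde L$ replace each $H_{i}$ by a symbol for $H_{i}^{0}$ and let $\tilde T$ be the corresponding theory. Each $H_{i}^{0}$ is a definable connected group, hence pp-definable over $\emptyset$ by the second quantifier-elimination fact above, so it is defined from $L$ by an existential ($\Sigma_{1}$) formula; conversely $H_{i}=\bigcup_{k}(\bar c^{(i)}_{k}+H_{i}^{0})$, so $H_{i}$ is defined from $\tilde L$, and Lemma~\ref{newsiglem} gives interdefinability. To upgrade to $\Delta_{1}$-interdefinability, note that in any recursive model of the constant-expanded theory the atomic set $H_{i}$ is recursive, while its finitely many cosets $\bar c^{(i)}_{k}+H_{i}^{0}$ are recursively enumerable (translates by named constants of the $\Sigma_{1}$ set $H_{i}^{0}$) and partition $H_{i}$; a partition of a recursive set into finitely many r.e.\ sets consists of recursive sets, so $H_{i}^{0}$ is recursive. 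Hence $H_{i}^{0}$ is $\Delta_{1}$, and $H_{i}$, a finite union of translates of the recursive set $H_{i}^{0}$, is $\Delta_{1}$ over $\tilde L$. Thus $\tilde T$ and the constant expansion of $T$ are $\Delta_{1}$-interdefinable with equal spectra by Proposition~\ref{doneisamespec}, and combined with Proposition~\ref{constantscanthurt} this yields $\spec(\tilde T)=\spec(T)$.

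The main obstacle is the second paragraph: everything hinges on choosing the coset representatives from the prime model, and this is exactly where modularity enters, through the splitting $G=G_{0}\oplus V$ and the connectedness of the $V$-part $H(V)$. If the disconnectedness of $H$ could come from the $V$-direction, the representatives would be generic and naming them would be illegitimate. Once the representatives are algebraic the remaining recursion theory is soft: the only nontrivial observation is that a finite partition of a recursive set into r.e.\ pieces is automatically a partition into recursive pieces, which is what makes the connected components $\Delta_{1}$ even though they are a priori only $\Sigma_{1}$.
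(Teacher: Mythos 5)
Your proposal is correct and follows essentially the same route as the paper: name a full set of coset representatives of each connected component as prime constants, observe that each $G_0$ is pp-definable over $\emptyset$ hence $\Sigma_1$, and then use the fact that the finitely many cosets form an r.e.\ partition of the recursive set $G$ to conclude each is $\Delta_1$, finishing with Lemma~\ref{newsiglem} and Propositions~\ref{doneisamespec} and~\ref{constantscanthurt}. Your detour through the splitting $G=G_0\oplus G/G_0$ to place the representatives in $\acl(\emptyset)$ is harmless but unnecessary: since $G_0$ is $\emptyset$-definable of fixed finite index, the existence of a full set of representatives is a first-order sentence of $T$, hence witnessed in the prime model, which is all that an expansion by prime constants requires.
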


\begin{proof}
 For each relation symbol $G$ in $L$, let $G_0$ be the connected component of $G$. Remember that $G_0$ is a finite-index subgroup of $G$. We first take an expansion $T_2$, in signature $L_2$, of $T$ by finitely many constants $\{ c_{G,i} \sthat G \in L, i \leq [G : G_0] \}$ from the prime model, to represent each coset of $G_0$ in $G$; then Proposition \ref{constantscanthurt} guarantees that $\spec(T_2) = \spec(T)$. Then we note that by the second quantifier elimination result above, each $G_0$ is defined by an existential formula. In $T_2$, each coset of $G_0$ in $G$ contains an interpretation of a constant symbol, so each of these cosets is also defined by an existential formula. Now the quantifier-free definable (recursive in the atomic diagram of a model) $G$ is a finite union of existentially definable (recursively enumerable in the atomic diagram of a model) cosets of $G_0$, forcing each coset to be definable by both a universal and an existential $L_2$-formula. Let $\phi_G$ be an $L_2$ formula defining $G_0$. As $T_2$ proves that $\phi_G$ is equivalent to both an existential and a universal formula, $\phi_G$ is $\Delta_1$. In $\tilde{L}$, we shall have names for all these constants and for all the connected $G_0$ but not for any $G$. Formally, we apply Definition \ref{newsigdef} to the set of $L_2$-formulae $S := \{+\}\cup \{ \phi_G \sthat G \in L \} \cup \{ c_{G,i} \sthat G \in L, i \leq [G : G_0] \}$ and note that the hypotheses of Lemma \ref{newsiglem} are satisfied, so $\tilde{T} := T_2(S)$ is \donei with $T_2$, and thus has the same spectrum. The second and third conclusions follow because ``connected'' and ``Morley rank'' did not change from $T$ to $\tilde{T}$.
\end{proof}

\begin{lem} \label{wlogrankone}
 Suppose that $T$ is a strongly minimal theory of abelian structures in a finite signature $L$, and all relation symbols in $L$ are connected. Then there is a strongly minimal theory $\tilde{T}$ of abelian structures in a finite signature $\tilde{L}$ such that \begin{itemize}
 \item $T$ and $\tilde{T}$ have the same recursive spectrum.
 \item all relation symbols in $\tilde{L}$ have Morley rank $1$.
\end{itemize}
\end{lem}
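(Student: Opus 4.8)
The plan is to reduce each relation symbol of Morley rank $k\ge 2$ directly to a finite family of rank-$1$ subgroups, rather than inducting down one rank at a time. Fix a relation symbol $G$ of $L$ with $\RM(G)=k\ge 2$; by hypothesis $G$ names a connected subgroup of $M^n$. (A connected symbol of rank $0$ is trivial, defining $\{\bar 0\}$, and may be discarded since it is quantifier-free definable as $\bigwedge_i x_i=0$; so I assume every symbol has rank $\ge 1$.) Since $\RM(G)=k$, a generic $\bar a\in G$ has $\dim(\bar a)=k$, so by exchange there is a set $C\subseteq\{1,\dots,n\}$ of $k$ coordinates with $\bar a_C$ generic in $M^C$. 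Because $G$ is connected, its projection $\pi_C(G)$ is a connected subgroup of $M^C$ of full rank $k$, hence all of $M^C$; and because $\RM(G)=k=\RM(M^C)$, the kernel $F:=G\cap\{x_i=0:i\in C\}$ of $\pi_C|_G$ is finite. For each $i\in C$ I set
$$ G^{(i)} := G \cap \bigcap_{i'\in C\setminus\{i\}}\{x_{i'}=0\}. $$
Each $G^{(i)}$ is a subgroup, is quantifier-free definable from $G$, projects onto its $i$-th coordinate (since $\pi_C(G)=M^C$), and has finite kernel there, so $\RM(G^{(i)})=1$.

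The structural claim is that $G=\sum_{i\in C}G^{(i)}+F$: given $\bar x\in G$, choose $\bar y_i\in G^{(i)}$ with $i$-th coordinate $x_i$, so $\bar w:=\sum_{i\in C}\bar y_i\in G$ agrees with $\bar x$ on $C$, whence $\bar x-\bar w\in F$; the reverse inclusion is clear. The elements of $F$ and the finite kernels $K_i$ of $G^{(i)}\to M$ all lie in $\acl(\emptyset)$, so after an expansion by prime constants (harmless for the spectrum by Proposition \ref{constantscanthurt}) I may name the elements of the finite $\emptyset$-definable set $\hat F:=F+\sum_{i\in C}K_i$. I then build $\tilde L$ by replacing each rank-$\ge 2$ symbol $G$ by its symbols $G^{(i)}$ (retaining $+$, the rank-$1$ symbols, and the constants), and take $\tilde T:=T(S)$ via Definition \ref{newsigdef}, where $S$ collects all these $G^{(i)}$.

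The crux is to verify both halves of the hypothesis of Lemma \ref{newsiglem} with $\Delta_1$ formulae. One direction is immediate, since each $G^{(i)}$ is quantifier-free, hence $\Delta_1$, over $L$. The difficulty, and the main obstacle, is the reverse: recovering $G$ from the $G^{(i)}$ by a formula that is $\Delta_1$ and not merely $\Sigma_1$. The identity $G=\sum_{i\in C}G^{(i)}+F$ only exhibits $G$ as a pp (existential) formula over the $G^{(i)}$, which guarantees recursive enumerability but not recursiveness. To upgrade this, I will give an explicit decision procedure: to test $\bar z\in G$ in a model where the $G^{(i)}$ are recursive, search for each $i\in C$ a witness $\bar y_i\in G^{(i)}$ whose $i$-th coordinate is $z_i$ — this search halts because surjectivity of $G^{(i)}$ onto its $i$-th coordinate is a first-order consequence of the (complete) theory $\tilde T$ — set $\bar w=\sum_{i\in C}\bar y_i$, and accept iff $\bar z-\bar w\in\hat F$, a finite test against the named constants. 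This correctly decides membership for \emph{any} choice of witnesses, since two choices of the $\bar y_i$ differ by an element of $\sum_{i\in C}K_i$, which is already absorbed into $\hat F$. Hence $G$ is $\Delta_1$ over $\tilde L$.

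Applying this to every symbol of rank $\ge 2$ simultaneously yields a finite relational signature $\tilde L$ all of whose symbols name rank-$1$ subgroups, and Lemma \ref{newsiglem} shows $\tilde T$ is $\Delta_1$-interdefinable with the prime-constant expansion of $T$. Proposition \ref{doneisamespec} then gives $\spec(\tilde T)=\spec(T)$, and $\tilde T$ is again a strongly minimal theory of abelian structures: its relation symbols are subgroups, and both strong minimality and the abelian-structure axioms transfer across interdefinability. I expect essentially all of the genuine work to be in the $\Delta_1$ (rather than merely pp/$\Sigma_1$) recovery of $G$ from its rank-$1$ pieces; the remainder is bookkeeping with ranks, connectedness, finite kernels, and prime constants.
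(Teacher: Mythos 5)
Your proposal is correct and follows essentially the same route as the paper: you decompose each connected symbol $G$ as $G=\sum_{i}G^{(i)}+F$ where the $G^{(i)}$ are the rank-$1$ kernels of the projections omitting one basis coordinate and $F$ is the finite kernel of the projection onto a basis, name the finite kernel by prime constants, and recover $G$ by the same witness-search-plus-finite-test decision procedure the paper uses to verify the hypotheses of Lemma \ref{newsiglem}. (Your $\hat F$ is in fact just $F$, since each $K_i$ equals $F$, but this is harmless.)
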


\begin{proof}
 As in the last lemma, the proof involves an expansion by finitely many constants from the prime model, and a change of signature as in Definition \ref{newsigdef}.
 Remove all relations symbols in $L$ that have Morley rank 0. By connectedness, these can only define the element $0\in M^n$ for some $n$.

 For each $n_G$-ary relation symbol $G$ in $L$, let $r_G := RM(G) \geq 1$. In what follows, we make reference to a generic element of $G$, which is where we make use of the connectedness of $G$.
 To lighten notation, assume that the first $r_G$ coordinates of a generic element of $G$ are its basis, that is to say that the projection onto the first $r_G$ coordinates is a surjective group homomorphism $\pi_G : G \rightarrow M^{r_G}$ with finite kernel $K_G$. We first expand  to a new signature $L_2$ by a set $C$ of constants naming (all coordinates of) all elements of all the finite kernels $K_G$.

 For each $G$ and for each $i \leq r_G$, let $G_i$ be the kernel of the projection of $G$ to the first $r_G$ coordinates excluding $i$, defined by
  $$ \phi_{G,i} := (x_1, \ldots, x_{r_G}, y_{r_G+1}, \ldots, y_{n_G}) \in G \wedge (\bigwedge_{j \leq r_G, j\neq i} x_j=0)$$
Note that $RM(G_i) = 1$, as the projection to the $i$th coordinate is surjective, with finite kernel $K_G$.
We shall apply Definition \ref{newsigdef} to $L_2$ with $S := \{ \phi_{G,i} \sthat G \in L, i \leq r_G \}\cup \{\text{constant symbols in }L_2\}$.
Each formula in $S$ is quantifier-free definable in $L_2$.
To finish proving the Lemma, it suffices to show that each symbol in $L_2$ is $\Delta_1$-definable in $L_2(S)$. That is, we need to produce a $\Delta_1$-definition of $G$ from the constants naming elements in $K_G$ and from the $G_i$.

 \begin{claim}
  The group $G$ can be recovered from the $G_i$ and $K_G$ as follows: $G = G_1 + G_2 + \ldots + G_{r_G} + K_G$.
 \end{claim}
 \begin{proof}The inclusion  $G \supseteq G_1 + G_2 + \ldots + G_{r_G} + K_G$ is obvious, since all $G_i$ and $K_G$ are subgroups of $G$. On the other hand, since $\pi_G$ is surjective, for each $(a_1,\ldots, a_{r_G}, \bar{b}) \in G$, there are elements $(0, \ldots, a_i, \ldots, 0; \bar{c_i}) \in G_i$ for each $i$. For any choice of these, their sum $(\bar{a}, \sum_i \bar{c_i})$ is in $G$ and differs from $(\bar{a}, \bar{b})$ by something in the kernel of $\pi_G$, which is $K_G$. \end{proof}

 Thus, given an atomic diagram of a model of $T_2(S)$, to check whether $(\bar{a}, \bar{b})$ is in $G$ we need only find some tuple $(0, \ldots, a_i, \ldots, 0; \bar{c_i})$ for each $i\leq r_G$, and check whether $(\bar{0}, \bar{b}-\sum_i \bar{c_i})$ is in $K_G$, which is a recursive procedure. We have now verified all the hypotheses of Lemma \ref{newsiglem}, and its conclusion yields that $\Spec(\tilde{T})=\Spec(T)$.
\end{proof}

 After applying Lemma \ref{wlogcon} one more time, we may assume that all the group symbols in our language are connected and rank one, i.e. strongly minimal. By replacing groups of the form $H\times \{0\}$ by $H$, we may also assume that each of these strongly minimal subgroups of $M^n$ projects surjectively onto all coordinates.

 Finally, we make one last easy reduction before commencing our analysis of algebraicity. The theorem which we are aiming to prove states that given any recursive positive dimensional model of $T$, we can recursively present every countable model of $T$. We can now pass to the theory $T'$ we get by removing all constant symbols from the signature (even if these constants no longer are represented in the prime model). This is because given any recursive positive dimensional model $M$ of $T$, removing the constants yields a recursive positive dimensional model of $T'$, from which the theorem gives a recursive presentation of every countable model of $T'$. Since every model of $T$ is a model of $T'$ with finitely many constants named, each countable model of $T$ also has a recursive presentation.

\subsection{First step towards a binary language.}\label{firstbinary}
We have now reduced to the case of strongly minimal abelian structures with no constants where all relations have Morley rank and degree 1. To prove Conjecture \ref{conjtoprove}, we fix a recursive positive dimensional model $M$ of such a theory $T$ and endeavor to show that all models of $T$ are recursively presentable. 

We now introduce a finite collection of subgroups of $M^2$ which we use to analyze the structure $M$. It is easy enough to describe the new signature $\tilde{L}$, and easy enough to see that all symbols in $\tilde{L}$ are $\Delta_1$-definable in $L$.

 For each $n_G$-ary relation symbol $G \in L$ and each $i < n_G$, let $G_i \leq M \times M$ be the projection of $G$ to the $i$th and $(i+1)$st coordinates. Clearly, these are all strongly minimal. Since there is a fixed $n\in \omega$ such that for each $x\in M$ there are exactly $n$ elements $y$ in $M$ such that $(x,y)\in G_i$, the $G_i$ are $\Delta_1$-definable in $L$. Thus, taking $S := L \cup \{ G_i \sthat G \in L, i < n_G \}$, it is clear that $T^+ := T(S)$ (in the sense of Definition \ref{newsigdef} and Lemma \ref{newsiglem}) is \donei with $T$. Passing from this $T^+$ to the reduct $\tilde{T}$ without the original high-arity symbols for the groups $G$ is not so easy. For one thing, $T^+$ need not even be a definitional extension of $\tilde{T}$. It may be the case that only a finite-index supergroup of $G$ is definable in $\tilde{T}$. The next Lemma shows that a finite index supergroup of $G$ is definable in $\tilde{T}$, and the following example shows that $G$ may not be definable in $\tilde{T}$.

\begin{lem} \label{finindsuperlemma}
 For every $G \in L$, some finite-index supergroup of $G$ is quantifier-free definable in $\tilde{L}$ from the last paragraph.
\end{lem}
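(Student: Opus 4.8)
The plan is to define the obvious candidate supergroup and then verify it has all the required properties. Let $\hat{G} \leq M^{n_G}$ be the set of tuples $(x_1, \ldots, x_{n_G})$ satisfying $G_i(x_i, x_{i+1})$ for every $i < n_G$. This is the intersection of the preimages of the subgroups $G_i$ under the adjacent-coordinate projection homomorphisms, hence itself a subgroup of $M^{n_G}$, and it is quantifier-free defined in $\tilde{L}$ by the conjunction $\bigwedge_{i < n_G} G_i(x_i, x_{i+1})$. Since each tuple of $G$ projects into $G_i$ by the very definition of $G_i$ as a projection of $G$, we have $G \subseteq \hat{G}$. It remains only to bound the index $[\hat{G} : G]$.

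The crux is the rank computation $\RM(\hat{G}) = 1$. For the lower bound, $\hat{G} \supseteq G$ projects surjectively onto the first coordinate (as $G$ does onto every coordinate after our reductions), so $\RM(\hat{G}) \geq 1$. For the upper bound I would chain the finite fibers of the binary projections. Each $G_i$ is the image of the rank-$1$ group $G$ under a coordinate projection, so $\RM(G_i) \leq 1$; and since $G_i$ projects surjectively onto its first coordinate $M$, we get $\RM(G_i) = 1$ and its kernel $\{(0,y) \in G_i\}$ has rank $0$, hence is finite. Therefore every fiber of $G_i$ over a first-coordinate value is a coset of this finite kernel, so is finite. Consequently, once $x_1$ is fixed, $G_1$ confines $x_2$ to finitely many values, $G_2$ then confines $x_3$ to finitely many values, and so on along the chain $(1,2), (2,3), \ldots, (n_G-1, n_G)$; thus the fiber of $\hat{G}$ over a generic $x_1$ is finite, giving $\RM(\hat{G}) \leq 1$.

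Finally, $G$ and $\hat{G}$ are definable groups of the same Morley rank with $G \leq \hat{G}$, so the interpretable quotient $\hat{G}/G$ has Morley rank $\RM(\hat{G}) - \RM(G) = 0$ and is therefore finite. That is, $G$ has finite index in $\hat{G}$, which is the desired quantifier-free $\tilde{L}$-definable finite-index supergroup.

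I expect the only genuine content to be the identity $\RM(\hat{G}) = 1$, and within that, the observation that the overlapping pairs $(i, i+1)$ form a connected chain linking all $n_G$ coordinates, so that finiteness of each individual binary fiber propagates to finiteness of the full fiber of $\hat{G}$ over $x_1$. The containment $G \subseteq \hat{G}$, the group structure, and the quantifier-free definability are immediate, and the passage from equal Morley rank to finite index is the standard strongly minimal fact that a definable subgroup of the same rank is of finite index.
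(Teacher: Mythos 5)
Your proof is correct and follows essentially the same route as the paper: define $\hat{G}$ by the conjunction $\bigwedge_{i<n_G} G_i(x_i,x_{i+1})$, observe $G\le\hat G$, and deduce finite index from $\RM(\hat G)=1$, which the paper also gets by noting that the kernel of the projection of $\hat G$ to the first coordinate is a fiber product of finite groups (your chained-finite-fibers argument is the same computation spelled out). No substantive difference.
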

\begin{proof}
 For an $n$-ary relation symbol $G$ in $L$, let $\psi_G(x_1, \ldots, x_n)$ be the $\tilde{L}$-formula $ \bigwedge_{i < n} G_i(x_i, x_{i+1})$. The group $\hat{G}$ defined by $\psi_G$ is the fiber product of the $G_i$. Clearly, $G \leq \hat{G}$ and, since $\hat{G}$ is still Morley rank $1$ (because, for example, the kernel of the projection of $\hat{G}$ to the first coordinate is a fiber product of finite groups, so itself finite), $G$ must have finite index in $\hat{G}$.
\end{proof}

 \begin{ex} We build a structure with universe $3^\mathbb{Z}$ for the signature $\{ +, t, e\}$. We interpret $+$ to be the usual group operation on the direct product of countably many copies of $\mathbb{Z}/3 \mathbb{Z}$. We interpret the binary $t$ as the quasiendomorphism where $t(\vec{a},\vec{b})$ if and only if there is some $c \in \mathbb{Z}/3 \mathbb{Z}$ such that  $b_{i+1} := a_i +c$ for each $i$. Note that this $t$ is a three-to-three function (correspondence, if you prefer). To verify that every rational function in the field $\mathbb{Z}/3 \mathbb{Z}(t)$ is a quasiendomorphism, note that the kernel of any polynomial in $\mathbb{Z}/3 \mathbb{Z}[t]$ is finite (for example, the kernel of $t^3-1$ is the 3-periodic elements of $3^\mathbb{Z}$), consisting of (not necessarily all) sufficiently periodic elements of $3^\mathbb{Z}$. On the other hand, all the countably many periodic elements are in the algebraic closure of the empty set. We define the ternary $e$ to be a finite-index subgroup of $t(x,y) \wedge t(y,z)$, without defining any new quasiendomorphisms: $e(\vec{a}, \vec{b}, \vec{d})$ holds if and only if $b_{i+1} = a_i+c$ and $d_{i+1} = b_i +c$, for the \emph{same} $c$. Note that projecting $e$ onto any two coordinates just gives back $t$ or $t^2$, while taking fibers is useless for creating new quasi-endomorphisms as they are all finite.
 \end{ex}

 We will, in fact, utterly give up on recursively reconstituting a model of $T(S)$ out of a given recursive model of $\tilde{T}$. Here is what we do instead (labeled by the subsection in which the step appears):
 \begin{description}
 \item[\ref{firstbinary}] Given one recursive positive-dimensional model $M$ of a strongly minimal theory $T$ of abelian structures in a finite signature $L$ with all relation symbols in $L$ strongly minimal, expand it to a recursive positive-dimensional model $M^+$ of $T^+$ above, and take the reduct to a recursive positive-dimensional model $\tilde{M}$ of $\tilde{T}$ above. Note all three structures have the same universe $U$.
 \item[\ref{algebraicformulae}] Prove that the closure operators on subsets of $U$ given by algebraic closure in $M^+$ and in $\tilde{M}$ are the same. Conclude that they have the same quasiendomorphism ring, and that the algebraic closure of the empty set in $\tilde{M}$ and the algebraic closure of the empty set in $M^+$ are the same subset $U_0$ of $U$.
 \item[\ref{kashrut}] Extract from $\tilde{M}$ (a positive-dimensional recursive model of a strongly minimal theory $\tilde{T}$ of abelian structures where all relation symbols are binary and strongly minimal) a recursive presentation of its quasiendomorphism ring $R$, and a recursive enumeration of its algebraic closure of the empty set, $U_0$. Conclude that the prime model of $T^+$ is now recursive, since $U_0$ is recursively enumerable.
 \item[\ref{triumph}] Cite proposition 2.13 from \cite{BnB} which says that any model of $T^+$ is a direct sum of its prime model with a vector space over its quasiendomorphism ring $R$, and use this to give a recursive presentation of any countable model of $T^+$.
 \end{description}

The first item was achieved using Definition \ref{newsigdef} and Lemma \ref{newsiglem} in the beginning of this subsection.

\subsection{Algebraic formulae via matrices}\label{algebraicformulae}

\textbf{Setup recap:} $M$ is a positive-dimensional recursive model of a strongly minimal theory $T$ of abelian structures in a finite signature $L$ with all relation symbols in $L$ strongly minimal. $M^+$ and its theory $T^+$ are the $\Delta_1$-definitional expansions of $M$ and $T$ to a new language $L^+$ which has relation symbols $R_{G_i}$ for each $n_G$-ary relation symbol $G \in L$ and each $i < n_G$, with $R_{G_i}$ interpreted in ${M^+}$ to be the (strongly minimal) projection $G_i$ of $G$ to the $i$th and $(i+1)$st coordinates. $\tilde{M}$ and its theory $\tilde{T}$ are the reducts of $M^+$ and $T^+$ to a signature $\tilde{L}$ which has all the new relation symbols $R_{G_i}$, but none of the original high-arity $G$ from $L$. All three structures $M$, $M^+$, and $\tilde{M}$ share the same universe $U$. Of the three algebraic closure operators, $\acl^M$ and $\acl^{M^+}$ are obviously identical (definitional expansion) and will be denoted by $\acl$; and the third $\acl^{\tilde{M}}$ will be denoted $\tilde{\acl}$.

The quest of this subsection is to show that $\tilde{\acl} = \acl$. More precisely,
\begin{prop}
 For any $\bar{a}, b \in U$, if $b \in \acl(\bar{a})$, then there exists an $\tilde{L}$-formula $\theta(\bar{x},y)$ such that $\{ b' \sthat \tilde{M} \models \theta(\bar{a},b') \}$ is finite and contains $b$.
\end{prop}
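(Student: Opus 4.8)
The plan is to prove the nontrivial inclusion $\acl(\bar{a}) \subseteq \tilde{\acl}(\bar{a})$; the reverse inclusion is automatic since $\tilde{M}$ is a reduct of $M^+$, so fewer algebraic formulae are available and $\tilde{\acl}(\bar{a}) \subseteq \acl(\bar{a})$. The engine of the argument is Lemma \ref{finindsuperlemma}: every high-arity group $G \in L$ sits as a finite-index subgroup of the fiber product $\hat{G}$, which is quantifier-free definable in $\tilde{L}$. The example preceding that lemma warns that $G$ itself need not be recoverable in $\tilde{L}$, so the point to establish is that finite index is all the slack that algebraicity can tolerate.

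First I would reduce $b \in \acl(\bar{a})$ to a statement about a single connected subgroup. Using the pp-elimination facts above (every formula is a boolean combination of pp-formulae, and every connected definable group is pp-definable over $\emptyset$), $b \in \acl(\bar{a})$ is witnessed by a connected pp-over-$\emptyset$ subgroup $H \leq M^{k+1}$, where $\bar{a} = (a_1, \ldots, a_k)$, together with a translate placing $(\bar{a}, b)$ in a coset of $H$, and whose \emph{vertical fiber} $K := \{ d \sthat (\bar{0}, d) \in H \}$ is finite; finiteness of $K$ is exactly the assertion that the last coordinate adds no dimension over the first $k$. Equivalently, after passing to the vector space $V := G/G_0$ over the quasiendomorphism ring $R$, the condition becomes $\bar{b} \in \operatorname{span}_R(\bar{a}_1, \ldots, \bar{a}_k)$, i.e.\ $\bar{b} = \sum_i r_i \bar{a}_i$ for some $r_i \in R$.

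Next comes the matrix step. Since $H$ is pp-over-$\emptyset$ in $L^+$, it has the form $\exists \bar{z}\, \bigwedge_\ell G^{(\ell)}(\bar{w}_\ell)$ with each conjunct atomic. I would replace each atomic $G^{(\ell)}$ by the $\tilde{L}$-definable fiber product $\hat{G}^{(\ell)} \supseteq G^{(\ell)}$ of Lemma \ref{finindsuperlemma}, obtaining a $\tilde{L}$-pp-definable supergroup $\hat{H} \supseteq H$. As finite index is preserved under intersection and under coordinate projection, $[\hat{H} : H] < \infty$, whence the enlarged fiber $\hat{K} := \{ d \sthat (\bar{0}, d) \in \hat{H}\}$ satisfies $[\hat{K} : K] \leq [\hat{H} : H] < \infty$ and is therefore finite. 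Organizing the binary relations $G_i$ relating the coordinates as a matrix of quasiendomorphisms makes this transparent: finiteness of the vertical fiber is a rank condition over the division ring $R$, and replacing each $G$ by $\hat{G}$ alters each matrix entry only by a finite-kernel (hence \emph{constant}) quasiendomorphism, i.e.\ not at all in $R$, so the rank condition and thus finiteness of the fiber survive. The $\tilde{L}$-formula $\theta(\bar{x}, y)$ cutting out the appropriate coset of $\hat{H}$ then has $\{ b' \sthat \tilde{M} \models \theta(\bar{a}, b')\}$ finite and containing $b$.

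The main obstacle is the bookkeeping around cosets and base parameters rather than the index computation. The witnessing coset is a translate $\bar{c} + H$, and I must express it in $\tilde{L}$ without naming $b$; I expect to absorb $\bar{c}$ into an existential quantifier ranging over a finite $\tilde{L}$-definable set, which forces the simultaneous treatment of the base case $\bar{a} = \emptyset$, namely that $\acl(\emptyset)$ and $\tilde{\acl}(\emptyset)$ determine the same set $U_0$. Here the same finite-index trick applies, since a finite pp-over-$\emptyset$ subgroup of $M$ enlarges to a still-finite $\tilde{L}$-pp subgroup, but one must verify there is no circularity between pinning down $U_0$ and cutting out the cosets. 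Once $\tilde{\acl} = \acl$ is in hand, the advertised consequences, equality of the quasiendomorphism rings and of $U_0$, follow immediately, as both are read off from the common closure operator.
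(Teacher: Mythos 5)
Your proposal is correct and follows essentially the same route as the paper: both take the pp witness of algebraicity over $\emptyset$ (Corollary 2.5 of \cite{BnB}), replace each $L$-group $H_k$ occurring in it by the finite-index $\tilde{L}$-definable supergroup $\hat{H_k}$ of Lemma \ref{finindsuperlemma}, and observe that finite index propagates through sums, images, and preimages of linear maps, so the fiber over $\bar{a}$ remains finite. The one ``obstacle'' you flag --- expressing the witnessing coset in $\tilde{L}$ without naming $b$ or extra constants --- is not actually present: the witness is a parameter-free pp formula $\alpha(\bar{x},y)$ with $\alpha(\bar{a},y)$ finite and containing $b$, so the relevant coset is just the fiber over $\bar{a}$ of an $\emptyset$-pp-definable subgroup of $M^{k+1}$, and no separate treatment of $\acl(\emptyset)$ or circularity arises.
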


\begin{proof}
We begin with the $L$-formula witnessing algebraicity and convert it into an $\tilde{L}$-formula.
By corollary 2.5 in \cite{BnB}, we know that for $\bar{a}, b \in U$ with $b \in \acl(\bar{a})$, there is a pp formula $\alpha(\bar{x},y) := \exists \bar{z}\, \bigwedge_{k\leq N} \alpha_k(\bar{x},y,\bar{z})$ such that \begin{itemize}
 \item Each $\alpha_k$ is of the form
 $$ \alpha_k(\bar{x},y,\bar{z}) := (\bar{d}_{k1}  \cdot \bar{x} + e_{k1}  \cdot y + \bar{p}_{k1}  \cdot
 \bar{z}\,,\, \ldots \, ,\,\bar{d}_{kl}  \cdot \bar{x} + e_{kl}  \cdot y + \bar{p}_{kl}  \cdot
 \bar{z}) \in H_k$$
where $\bar{d}_{ki}$, $e_{ki}$, $\bar{p}_{ki}$ are tuples of integers, and $H_k$ is a strongly minimal group projecting surjectively onto each coordinate, one of finitely many relation symbols in $L$; for actual equations, we allow $H_k = \{ 0 \}$.
 \item The set defined by $\alpha(\bar{a},y)$ in $M$ is finite.
\end{itemize}

We now collect $\bigwedge_k \alpha_k(\bar{x},y,\bar{z})$ into a matrix equation with a collection of rows for each $k$. Let $A_k$ be the matrix with $i$th row $(\bar{d}_{ki}, e_{ki})$; let $P_k$ be the matrix with $i$th row $\bar{p}_{ki}$. From these, form $A$ to be the matrix with the $A_k$ vertically stacked; and form $P$ to be the matrix with the $P_k$ vertically stacked. Let $K: = \Pi_k H_k$. Then
  $$\alpha(\bar{x},y) := (\exists \bar{z}\, A \cdot (\bar{x}, y) + P \cdot \bar{z} \in K ) \leftrightarrow$$
replace $z$ by $-z$
 $$ \leftrightarrow (\exists \bar{z}\, A \cdot (\bar{x}, y)  \in K + P\cdot \bar{z} ) \leftrightarrow$$
$$ \leftrightarrow ( A \cdot (\bar{x}, y)  \in K + I_P )$$
where $I_P$ is the range of the $\zz$-linear function defined by the matrix $P$, i.e. $w \in I_P$ if and only if $\exists \bar{z}\, P \cdot \bar{z} = w$. So now
\begin{equation}\label{reducedformalg} \alpha(\bar{x},y) \leftrightarrow ( (\bar{x}, y)  \in A^{-1} (K + I_P) )\end{equation}
where $A^{-1}$ is a harmless preimage under a linear function, with no assumptions on invertibility of $A$.

Remember, our quest is to replace the high-arity relation symbols $H_k$ from $L$ by something definable from the new signature $\tilde{L}$, such as $\hat{H_k}$ from Lemma \ref{finindsuperlemma}. Let $\hat{\alpha}$ be the $\tilde{L}$-formula obtained from $\alpha$ by replacing each instance of $H_k$ by $\psi_{H_k}$ from Lemma \ref{finindsuperlemma}.
Note that in $ \alpha(\bar{x},y) \leftrightarrow ( (\bar{x}, y)  \in A^{-1} (K + I_P) )$ only $K$ has anything to do with the $H_k$. Replacing them by the finite-index supergroups $\hat{H_k}$, we get a finite-index supergroup $\hat{K}$ of $K$, and then a finite-index supergroup $\hat{K} + I_P$ of $K +I_P$, and then a finite-index supergroup $A^{-1} (\hat{K} + I_P)$ of $A^{-1} (K + I_P)$. Thus, since there are only finitely many solutions of $\alpha(\bar{a},y)$, there are only finitely many solutions of $\hat{\alpha} (\bar{a}, y)$, yielding the desired algebraic $\tilde{L}$-formula.\end{proof}

\subsection{Kashrut and quasiendomorphisms as words}\label{kashrut}

\textbf{Setup recap.} We have a recursive strongly minimal modular group $\tilde{M}$ in a finite relational signature $\tilde{L}$ which consists of the ternary relation $+$ along with a set $S$ of binary relation symbols interpreted as quasiendomorphisms. From $\tilde{M}$, we wish to extract a recursive presentation of its quasiendomorphism ring $R$, and a recursive enumeration of its algebraic closure of the empty set, $U_0$.

Each $s \in S$ is interpreted as a rank $1$ subgroup of $\tilde{M} \times \tilde{M}$ such that the projection to the first coordinate is surjective, so $s$ is either a finite-to-finite group correspondence on $\tilde{M}$, or it is $\tilde{M} \times F$ for some finite subgroup $F \leq \tilde{M}$. These last will be called \emph{constant quasiendomorphisms}. If $s\in S$ is of the form $\tilde{M} \times F$, then (from connectedness by Lemma \ref{wlogcon}) $s$ is in fact $\tilde{M}\times \{0\}$.  To streamline notation, we assume that \textbf{one of the $s \in S$ is interpreted as the diagonal, corresponding to the identity quasiendomorphism, and another is interpreted as $\tilde{M} \times \{0\}$, corresponding to the single-valued, constant quasiendomorphism}.

We first analyze the ring of quasiendomorphisms generated (as a division ring) by $S$, and later verify that these are all quasiendomorphisms. We examine the ring generated by $S$ carefully, keeping track of which words represent constant quasiendomorphisms, so that we don't try to invert the constant ones, and so that the set $Z$ of words representing constant quasiendomorphisms turns out to be a recursive subset in the end.

\begin{deff}
 Let $W$ be the term algebra for the signature $\{ +, -, \cdot, ^{-1} \}$ on the generators $S$.

 For $w \in W$, we define $\tilde{L}$-formulae $\phi_w(x,y)$ inductively: \begin{itemize}
  \item if $w = s \in S$, let $\phi_w(x,y) := s(x,y)$
  \item if $w = v^{-1}$ for some $v \in W$, let $\phi_w(x,y) := \phi_v(y,x)$
  \item if $w = -v$ for some $v \in W$, let $\phi_w(x, y) := \phi_v(x, -y)$
  \item if $w = v + u$ for some $u,v \in W$, let $\phi_w(x,y) := \exists z_v\, \exists z_u\, \left(\phi_v(x,z_v) \wedge \phi_u(x, z_u) \wedge y=z_v+z_u\right)$
  \item if $w = v \cdot u$ for some $u,v \in W$, let $\phi_w(x,y) :=  \exists z_u\, \left(\phi_u(x, z_u) \wedge \phi_v(z_u, y)\right)$ \end{itemize}
 \end{deff}

 We will use $w$ to refer to both the word as well as the group defined by $\phi_w$. Note that each $\phi_w$ is positive primitive. We now inductively define two subsets $Z \subset K$ of $W$. Words in $K$ are called \emph{kosher} and words in $Z$ are called \emph{zero words}. Really, $K$ is just the collection of words all of whose subwords define quasiendomorphisms, and $Z$ is the set of the words in $K$ which define constant quasiendomorphisms. The purpose of giving the inductive definition is to show that both are recursive subsets of $W$.

We induct on the length of the word $w \in W$. When $w$ has length 1, $w=s$ for some $s\in S$ and $w$ is in $K$. We carry the following inductive hypotheses: \begin{enumerate}
  \item $K$ is closed under subwords, i.e.
   if $v \notin K$, then $-v$, $v^{-1}$, $v+u$, $u+v$, $v \cdot u$, and $u \cdot v$ are all not in $K$, for any $u \in W$.
  \item $K$ is closed under $+$, $-$, and $\cdot$, i.e.
   if $u, v \in K$, then $-v$, $v+u$, $u+v$, $v \cdot u$, and $u\cdot v$ are all in $K$.
  \item $\phi_w$ defines a quasiendomorphism for any $w \in K$.
  \item If $w \in K$, then either $\phi_w$ is constant and $w \in Z$ and $w^{-1} \not\in K$, or $\phi_w$ is not constant and $w \not\in Z$ and $w^{-1} \in K$.
  \end{enumerate}
Hypotheses 1,2 and 4 completely determine $K$ and $Z$, and 3 follows from the construction.

To make this recursive, we need a recursive procedure that determines whether a kosher word is a zero word.
Recall that we have $\tilde{M}$, a recursive positive-dimensional model of $\tilde{T}$, and we have $t \in \tilde{M} \smallsetminus \acl(\emptyset)$. A kosher word $w \in W$ is a zero word if and only if $(t,0) \in w$, which is $\Sigma_1$ since $\phi_w$ is defined by an existential $\tilde{L}$-formula in the recursive model $\tilde{M}$. On the other hand, a kosher word is non-zero if and only if $t$ belongs to its image, which is also defined by an existential formula in the same recursive model. Therefore, the collection of kosher words of length $n$ which are zero words is uniformly recursive, giving us a way to determine which words of length $n+1$ are kosher, demonstrating that both $K$ and $Z$ are recursive subsets of $W$. Thus, since $K$ and $Z$ are both recursive, this yields a recursive presentation $K/Z$ of the ring of quasiendomorphisms generated by $S$. We now turn our attention to verifying that all quasiendomorphisms arise in this way.


\subsubsection{Row-reduction}

To be able to recursively characterize the quasiendomorphism ring, and to recursively enumerate $\acl(\emptyset)$, we need to show that the entire quasiendomorphism ring is generated by $S$ as a division ring. In particular, if an $\tilde{L}$-formula $\theta(x,y)$ defines a quasiendomorphism $f:\tilde{M} \rightarrow \tilde{M}$, we need to find a word $w \in K$ such that $f-w$ is a constant quasiendomorphism (i.e., $f=w$ in the quasiendomorphism ring), and $w$ is a finite index supergroup of $f$. To do so, we write the quasiendomorphism $f$ as a matrix equation. Via a process of row reduction, we determine the correct word $w$. We may assume that $\theta$ is a pp formula of the form
$$\theta(x,y) = \exists z_1\, \exists z_2\, \ldots \exists z_n \bigwedge_{i \leq m}\left( (b_{i1} x + c_{i1} y + \sum_{j \leq n} (a_{ij1}z_j),b_{i2} x + c_{i2} y + \sum_{j \leq n} (a_{ij2}z_j))\in s_i \right)$$,
where $s_i\in S$ and all $a,b,c$ are in $\mathbb{Z}$. We wish to re-write this formula as a matrix equation. By a formula of the form $\sum_{i\leq n} (w_i x_i) \in F$, we mean that there are some $x_i'$ so that $(x_i,x_i')\in w_i$ and $\sum_{i\leq n} x_i' \in F$. We re-write $\theta$ as the following:

$$ \theta(x,y) = \exists z_1\, \exists z_2\, \ldots \exists z_n \bigwedge_{i \leq m}\left( (b_{i1}s_i-b_{i2}) x + (c_{i1}s_i-c_{i2}) y + \sum_{j \leq n} (a_{ij}s_i-a_{ij2})z_j) \in \im(0)\right),$$ where $0$ is the constant $0$ quasiendomorphism.
 Note that all coefficients of any $x,y,$ or $z_j$ are words in $K$. Also, $\im(0)$ is a finite definable group.

We work with augmented matrices of the form $N=(\vec{b},\vec{c},\vec{a_1},\ldots, \vec{a_n} \vert \vec{w})$, where $b_i, c_i, a_{ij} \in K$ and $w_i$ are elements of $Z$. Associated to this matrix is the group $$G_N:= \{(x,y,\bar{z})\vert (\vec{b},\vec{c},\vec{a_1},\ldots, \vec{a_n}) \cdot (x,y,\bar{z})\in \im(\vec{w})\}=$$ $$=\{(x,y,\bar{z})\vert \bigwedge_{i\leq m}( b_i x + c_i y + \sum_{j \leq n} (a_{ij}z_j) \in \im(w_i))\}$$ The group we are really interested in is $H_N$, the projection of $G_N$ upon the first 2 coordinates. Throughout the row-reduction, we maintain the \textbf{inductive hypothesis: $H_N$ is a finite index supergroup of the quasiendomorphism $f$ with which we start.}

Here are the operations we perform on these augmented matrices while maintaining the inductive hypothesis.

 If some coefficient, say $a_{i1}$, is in $Z$, its image is a finite group and its output does not depend on its input, so the equation $b_i x + c_i y + \sum_{j \leq n} (a_{ij}z_j) \in \im(w_i)$ is exactly equivalent to $b_i x + c_i y + \sum_{2 \leq j \leq n} (a_{ij}z_j) \in \im(w_i+a_{i1})$. This works equally well for any $b_i$, $c_i$, or $a_{ij}$. Doing this as much as necessary in-between all other operations, we may assume that each $a_{ij} \in Z$ if and only if $a_{ij} = 0$, and similarly for $b_i$ and $c_i$.

 For any $w \in K$, we can add a $w$-multiple of one row to another row. For example, the formula
$$b_1 x + c_1 y + \sum_{j \leq n} (a_{1j}z_j) \in \im(w_1)\,  \wedge \, b_2 x + c_2 y + \sum_{j \leq n} (a_{2j}z_j) \in \im(w_2)$$
 implies
 $$ b_1 x + c_1 y + \sum_{j \leq n} (a_{1j} z_j) \in \im(w_1)\, \wedge \,
(b_2 +w b_1) x + (c_2 + w c_1) y + \sum_{j \leq n} ( (a_{2j} + w a_{1j})z_j ) \in \im(w_2 + w w_1)$$
which in turn implies
$$ b_1 x + c_1 y + \sum_{j \leq n} (a_{1j} z_j) \in \im(w_1) \, \wedge $$ $$\wedge \,
(b_2 +w b_1 -w b_1) x + (c_2 + w c_1 -w c_1) y + \sum_{j \leq n} ( (a_{2j} + w a_{1j}- w a_{1j})z_j ) \in \im(w_2 + ww_1-ww_1)$$
which is equivalent to
$$b_1 x + c_1 y + \sum_{j \leq n} (a_{1j}z_j) \in \im(w_1)\, \wedge $$ $$ \wedge \, b_2 x + c_2 y + \sum_{j \leq n} (a_{2j}z_j) \in \im(w_2+ww_1-ww_1+wb_1-wb_1+wc_1-wc_1+ \sum_j w a_{1j} -w a_{ij})
$$

The word $w_2+ww_1-ww_1+wb_1-wb_1+wc_1-wc_1+ \sum_j w a_{1j} -w a_{ij}$ is clearly also in $Z$ and its image contains $\im(w_1)$. So, if we add a multiple of one row in our augmented matrix $N$ to another row to form $N'$, the new subgroup $H_{N'}$ will be a finite-index extension of $H_N$, which maintains our inductive hypothesis. Note that a similar analysis yields that we can multiply a row by a word $v\in K\smallsetminus Z$ without violating our inductive hypothesis. This uses the fact that $v^{-1}vw-w$ is in $Z$ for any $w\in K$ and $v\in K\smallsetminus Z$.

This allows us to row-reduce as in linear algebra. We can eliminate non-zero but constant coefficients by moving them into the image, and we can eliminate unwanted non-constant coefficients via adding a multiple of one row to another. We treat the first two columns separately. Since $\theta(x,y)$ defines a quasiendomorphism, at least one of the coefficients of $y$ must be nonconstant; we may assume that this is $c_1$. We can multiply the first row by $c_1^{-1}$, so that $c_1=1$ and then subtract a $c_i$-multiple of the first row from the $i$th row for each $i \geq 2$. If the coefficient of $x$ in some equation other than the first is now non-constant, we assume this is $b_2$ and similarly use the second row to obtain a new augmented matrix where $b_2=c_1=1$, $b_1 =c_2 =0$ and $b_i = c_i =0$ for $i \geq 3$; this is the general case. Otherwise, we obtain an augmented matrix where $b_i = c_i =0$ for $i \geq 2$ and $c_1=1$; this is the degenerate case.

Now we leave the top alone (two rows in the general case, one in the degenerate case), and row-reduce $\{ a_{ij} \}_{2\text{ or 3} \leq i \leq m, j \leq n}$ in the usual way, to obtain, possibly after reordering $\bar{z}$ a matrix $A = (I \hat{A})$ where $I$ is an identity matrix. (As usual, we drop any rows where all coefficients are zero.) Now we use these to get rid of the coefficients above the $I$ part in the top (one or two) rows, after which all but those (one or two) rows becomes irrelevant.

So we have row-reduced $\theta$ to
$$\exists \bar{z} \,\left( b x + y + \sum_{j \leq n} (a_{j}z_j) \in \im(w)\right)$$
 in the degenerate case, or to
$$\exists \bar{z} \left( \, y + \sum_{j \leq n} (a_{1j}z_j) \in \im(w_1) \, \wedge\, x + \sum_{j \leq n} (a_{2j}z_j) \in \im(w_2) \right)$$
in the general case.

In the degenerate case, this only defines a quasiendomorphism if $a_{j} =0$ for all $j$, in which case, $H_N= w-b $. In the general case, this only defines a quasiendomorphism if there is some $v\in K$ so that $a_{1j} = v a_{2j}$, up to zero words.
Finally, adding $-v$ times the second row to the first, we get the equation $y-vx\in \im (w_1-vw_2 + w_3)$, where $w_3=\sum_{j\leq n} (a_{1j} - v a_{2j})$. Then the word we need is $v+w_1-vw_2+w_3$.

 Thus we have shown that all quasiendomorphisms are finite index subgroups of quasiendomorphisms defined by words.

\begin{cory}
\begin{itemize}
\item The quasiendomorphism ring has a recursive presentation.
\item $\acl(\emptyset)$ is a $\Sigma_1$ subset of $U$.
\item The prime model of $T$ has a recursive presentation.
\end{itemize}
\end{cory}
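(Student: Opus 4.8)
The three items are packaged consequences of the constructions of \S\ref{kashrut}, so my plan is to read each one off from the facts already established: that $K$ and $Z$ are recursive subsets of $W$; that $K/Z$ recursively presents the subring of the quasiendomorphism ring generated by $S$; that the row-reduction realizes every quasiendomorphism as a finite-index subgroup of one defined by a word $w\in K$; and that $\tilde{\acl}=\acl$ by the proposition of \S\ref{algebraicformulae}. Throughout I fix the recursive positive-dimensional $\tilde{M}$ and the generic $t\in\tilde{M}\smallsetminus\acl(\emptyset)$.

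For the first item I would upgrade ``the ring generated by $S$'' to all of $R$. Every element of $R$ is represented by a connected quasiendomorphism, which by the second quantifier-elimination fact is pp-definable over $\emptyset$; the row-reduction then produces a word $w\in K$ of which this quasiendomorphism is a finite-index subgroup, and two quasiendomorphisms of finite index in one another induce the same map on $G/G_0$, hence the same element of $R$. So $w$ represents our element, the surjection $K\to R$ factors through the recursive quotient $K/Z$, the ring operations are word concatenations (recursive), and $w_1\equiv w_2$ in $R$ if and only if the kosher word $w_1-w_2$ lies in the recursive set $Z$. Thus $K/Z$ is a recursive presentation of $R$.

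For the second item, the plan is to prove $\acl(\emptyset)=\bigcup\{\,v(0)\mid v\in K\,\}$, where $v(0):=\{b\mid (0,b)\in\phi_v\}$, and that the union is $\Sigma_1$. It is r.e.\ because for each word $v$ one can list $v(t)=\{c\mid\tilde{M}\models\phi_v(t,c)\}$ existentially in the recursive model, and $v(0)=\{c-c'\mid c,c'\in v(t)\}$ since $v(t)$ is a coset of the finite group $v(0)$; equivalently $v(0)$ is the image of the zero word obtained by precomposing $v$ with the symbol for $\tilde{M}\times\{0\}$. The inclusion $\supseteq$ is immediate, as each $v(0)$ is a finite subgroup, hence contained in $\acl(\emptyset)=\tilde{\acl}(\emptyset)$. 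The reverse inclusion is the crux: given $b\in\acl(\emptyset)$ I must find a finite $\emptyset$-definable subgroup $F\ni b$, for then $\tilde{M}\times F$ is a (constant) quasiendomorphism containing $(0,b)$, and by row-reduction it is a finite-index subgroup of some $w\in K$, whence $b\in w(0)$. To produce $F$ I would take the finite orbit $O$ of $b$ under $\operatorname{Aut}(\tilde{M}/\emptyset)$, which is an $\emptyset$-definable finite set, and let $F:=\langle O\rangle$; the ascending union defining $F$ stabilizes once $F$ is finite, making $F$ an $\emptyset$-definable subgroup containing $b$. Finiteness of $F$ reduces to $\acl(\emptyset)$ being a torsion group, which follows from the dichotomy that a connected strongly minimal group is either divisible torsion-free (so $\acl(\emptyset)=0$ by scaling automorphisms) or elementary abelian of exponent $p$ (so every element is torsion).

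For the third item, once $U_0=\acl(\emptyset)$ is known to be r.e.\ I would present the prime model directly: its universe is the r.e.\ subgroup $U_0$ of the recursive structure $\tilde{M}$, so enumerating $U_0$ without repetition and pulling back the recursive relations of $\tilde{M}$ yields a recursive $\tilde{L}$-structure isomorphic to $\acl(\emptyset)$, with closure of $U_0$ under $+$ guaranteeing the pulled-back operation is total and recursive. Since $\acl(\emptyset)$ is the prime model whenever it is infinite, this settles the main case; in the remaining ($\aleph_0$-categorical) case one adjoins a recursive copy of the appropriate finite-dimensional vector space over the already-recursive ring $R$, using the decomposition of \S\ref{triumph}. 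I expect the genuine obstacle to be exactly the reverse inclusion of the second item, namely certifying that $\acl(\emptyset)$ is exhausted by the kernels $v(0)$ rather than merely contained in the full algebraic closure; this is the one step that is not formal and where the group-theoretic structure of connected strongly minimal groups must be invoked with care.
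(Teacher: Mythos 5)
Your first item and the infinite-$\acl(\emptyset)$ half of your third item track the paper's proof. The genuine gap is in item (2), at precisely the step you flag as the crux. To put a given $b\in\acl(\emptyset)$ inside a finite $\emptyset$-definable subgroup, you form the subgroup generated by the orbit of $b$ and justify its finiteness by the claim that a connected strongly minimal group is either divisible torsion-free or elementary abelian of exponent $p$. That dichotomy is false: a divisible strongly minimal group can have torsion --- for instance $\mathbb{Z}(p^\infty)\oplus\mathbb{Q}^{(\omega)}$ in the pure group language is strongly minimal, connected, divisible, and has $\acl(\emptyset)=\mathbb{Z}(p^\infty)$ --- and in the torsion-free case the appeal to ``scaling automorphisms'' does not survive the extra predicates of an abelian structure, which such maps need not preserve. (The fact you actually need, that $\acl(\emptyset)$ is torsion so that the finitely generated $\langle O\rangle$ is finite, happens to be true, but the clean way to see it is the very argument you are trying to route around.) The paper short-circuits all of this: specializing the matrix normal form (\ref{reducedformalg}) of Section \ref{algebraicformulae} to $\bar{x}=\emptyset$ shows that any $b\in\acl(\emptyset)$ already lies in a finite pp-$\emptyset$-definable group $F$; then $M\times F$ is a constant quasiendomorphism, hence contained in some $z\in Z$ by the word analysis of Section \ref{kashrut}, giving $\acl(\emptyset)=\bigcup_{z\in Z}\im(z)$ directly. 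You should cite (\ref{reducedformalg}) here rather than reprove it.

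A secondary problem is your handling of item (3) when $\acl(\emptyset)$ is finite. That case need not be $\aleph_0$-categorical: if the quasiendomorphism ring $R$ is infinite, then $\acl(a)$ contains the $R$-orbit of a generic $a$ and is infinite. Moreover, the direct-sum machinery of Section \ref{triumph} is seeded by an already-recursive prime model of $T$, so invoking it to build the prime model is circular, and a finite $\acl(\emptyset)$ is not itself a model of $T$ that Proposition 2.13 of \cite{BnB} could be applied to. The paper instead observes that in this case the prime model is $\acl(a)$ for a generic $a\in\tilde{M}$ and shows this set is $\Sigma_1$ in the recursive model via Fact 3.1 of \cite{BnB}: $b\in\acl(a)$ if and only if $(a,b+d)\in w$ for some $w\in K\smallsetminus Z$ and some $d\in\acl(\emptyset)$, after which pulling back the structure gives the recursive presentation.
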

\begin{proof}
We have shown that every quasiendomorphism is equivalent to a member of $K$. Thus the quasiendomorphism ring is equal to $K/Z$. Since each of $K$ and $Z$ are recursive, this gives a recursive presentation.

If $b\in \acl(\emptyset)$, then by the form of the algebraic formula from (\ref{reducedformalg}) on p. \pageref{reducedformalg} with $\bar{x}=\emptyset$, we see that $b$ is in a finite $\emptyset$-definable group $F$. Thus $M \times F$ is a quasiendomorphism. Thus there is a word $z\in Z$ such that $M \times F \leq z$. Thus we see that $\acl(\emptyset)=\bigcup_{z\in Z} \im(z)$, which is naturally a $\Sigma_1$ set.

If $\acl(\emptyset)$ is infinite, then the previous claim gives a recursive presentation of the prime model. Suppose that $\acl(\emptyset)=F$ is a finite subgroup of $M$. We show that $\acl(a)$ is $\Sigma_1$ for a generic $a$. By Fact 3.1 of \cite{BnB}, $b\in \acl(a)$ if and only if there is some $w\in K\smallsetminus Z$ and some $d\in \acl(\emptyset)$ so that $(a,b+d)\in w$. This is again naturally a $\Sigma_1$ set.
\end{proof}

\subsection{Triumphant}\label{triumph}

We have now analyzed the quasiendomorphism ring of $\tilde{T}$, which has the same algebraic closure relation as that of $T$. Following \cite{BnB}, we define $T_1:=\Th(M/\acl(\emptyset))$ and $\tilde{T}_1:=\Th(\tilde{M}/\acl(\emptyset))$. We first note that $T_1$ and $\tilde{T}_1$ are $\Delta_1$-interdefinable. This follows immediately from Lemma \ref{finindsuperlemma} and the axioms for $T_1$ and $\tilde{T}_1$ (found on page 31 of \cite{BnB}), which state that every pair of definable groups $G\leq H$ with finite index has index 1. Lemma \ref{finindsuperlemma} says that for every $G\in L$ there is a finite index supergroup $\hat{G}$ of $G$ which is $\Delta_1$-definable in $\tilde{T}$. From the axiomatization, we see that $T_1\models G=\hat{G}$.

From our recursive presentation $K/Z$ of the quasiendomorphism ring $R$, we recursively present every model of $\tilde{T}_1$. The axioms of $\tilde{T}_1$ say that a model is an $R$-vector space. An $R$-vector space of dimension $d\in \omega+1$ can be presented recursively as $(K/Z)^d$. This gives a recursive presentation of every model of $\tilde{T}_1$, and, by the previous paragraph, of every model of $T_1$.

By proposition 2.13 of \cite{BnB}, every direct sum of a model of $T$ and a model of $T_1$ is again a model of $T$. Therefore, letting $M_0$ be a recursive prime model of $T$, and letting $N$ be a recursive $d$-dimensional model of $T_1$, we obtain a recursive model $M_0\oplus N$ of $T$. By Fact 3.1 from \cite{BnB} characterizing algebraicity, the dimension over the prime model of this model is $d$. Thus for an arbitrary $d\in \omega+1$, we have given a recursive presentation of the model of $T$ of dimension $d$ over the prime model. Thus we have proved our main theorem:

\begin{thm}\label{modularspec}
If $T$ is a modular strongly minimal theory in a finite signature expanding a group, then $\Spec(T)= \emptyset, \omega+1,\text{ or }\{0\}$.
\end{thm}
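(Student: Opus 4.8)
The plan is to prove the stronger Conjecture \ref{conjtoprove} for $T$: a single recursive positive-dimensional model forces every countable model to be recursively presentable. The allowed list of spectra then follows formally, since $\emptyset$, $\{0\}$, and $\omega+1$ are exactly the subsets $S \subseteq \omega+1$ for which $k \in S$ with $k>0$ implies $S = \omega+1$. First I would strip the problem down to abelian structures: by Theorem \ref{thm-red-to-abstructs} together with Proposition \ref{doneisamespec}, $T$ has the same spectrum as a $\Delta_1$-interdefinable strongly minimal theory of abelian structures. Applying Lemma \ref{wlogcon}, then Lemma \ref{wlogrankone}, then Lemma \ref{wlogcon} once more, and discarding constants (which is harmless since removing prime constants only loses finitely many named elements), I reduce without changing the spectrum to the situation analyzed in Section 4: a constant-free strongly minimal abelian structure in which every relation symbol denotes a connected, Morley rank $1$ subgroup of some $M^n$ projecting surjectively onto every coordinate.

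Next, given a recursive positive-dimensional model $M$ of this reduced theory, I would pass to the binary language. Adjoining the binary projections $G_i$ produces a recursive $M^+ \models T^+$ that is $\Delta_1$-interdefinable with $M$, and forgetting the high-arity symbols yields the reduct $\tilde{M} \models \tilde{T}$ in which every relation is a binary strongly minimal group, i.e.\ a quasiendomorphism; all three structures share one universe $U$. The crucial geometric input is that the binary reduct retains the same closure, $\tilde{\acl} = \acl$. I would prove this by taking a pp-formula witnessing $b \in \acl(\bar{a})$, rewriting its conjunction of atomic constraints as a matrix membership $(\bar{x}, y) \in A^{-1}(K + I_P)$, and replacing each high-arity group $H_k$ by the $\Delta_1$-definable finite-index supergroup $\hat{H}_k$ of Lemma \ref{finindsuperlemma}. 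Since passing to finite-index supergroups preserves finiteness of the solution set, the resulting $\tilde{L}$-formula is still algebraic over $\bar{a}$ and contains $b$, so $\tilde{\acl} = \acl$ and the two structures share both their quasiendomorphism ring and their $\acl(\emptyset)$.

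The heart of the argument is then to realize the full quasiendomorphism ring $R$ inside the binary reduct. I would form the term algebra $W$ over $\{+,-,\cdot,{}^{-1}\}$ on the generators $S$, attach to each word $w$ the pp-formula $\phi_w$, and carve out the recursive subsets $Z \subset K \subseteq W$ of kosher and zero words. Recursiveness is extracted from the recursive model $\tilde{M}$ and a fixed nonalgebraic $t$: a kosher $w$ is a zero word iff $(t,0) \in w$ and is nonzero iff $t$ lies in its image, both existential and, by strong minimality, complementary conditions; this makes $Z$ recursive and $K/Z$ a recursive presentation of the ring generated by $S$. I then must show this is \emph{all} of $R$: given any $\tilde{L}$-formula $\theta(x,y)$ defining a quasiendomorphism, write it as an augmented matrix with entries in $K$ and right-hand sides in $Z$, and row-reduce, using that adding a $K$-multiple of one row to another and multiplying a row by an element of $K \smallsetminus Z$ only enlarge the associated group to a finite-index supergroup, thereby producing a word $w \in K$ equal to $f$ in $R$. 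It follows that $R = K/Z$ is recursively presented and that $\acl(\emptyset) = \bigcup_{z \in Z} \im(z)$ is $\Sigma_1$, which yields a recursive presentation of the prime model of $T$.

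Finally I would reconstitute the remaining models. By Proposition 2.13 of \cite{BnB}, every model of $T$ is a direct sum of its prime model with an $R$-vector space, whose dimension equals the dimension over the prime model. An $R$-vector space of dimension $d \in \omega+1$ is recursively presented as $(K/Z)^d$, and its direct sum with the recursive prime model $M_0$ gives a recursive model of $T$ in every dimension $d$, establishing Conjecture \ref{conjtoprove} and hence the theorem. I expect the main obstacle to be the row-reduction step showing that every quasiendomorphism is represented by a kosher word: one must carry the invariant that the associated group stays a finite-index supergroup of $f$ through each operation while controlling the zero-word bookkeeping (for instance verifying $v^{-1}vw - w \in Z$ for $v \in K \smallsetminus Z$) and separating the degenerate and general cases according to the behavior of the coefficient of $y$. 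The supporting conceptual key is the identity $\tilde{\acl} = \acl$, which is what legitimizes working entirely in the binary reduct even though $G$ itself may fail to be definable there.
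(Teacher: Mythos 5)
Your proposal is correct and follows essentially the same route as the paper: reduction to abelian structures, the two normalization lemmas, the passage to the binary reduct with $\tilde{\acl}=\acl$, the kosher/zero-word presentation of the quasiendomorphism ring with the row-reduction argument, and the final direct-sum reconstitution via Proposition 2.13 of \cite{BnB}. You have also correctly identified the two genuinely delicate points (the row-reduction invariant and the equality of closure operators), so there is nothing substantive to add.
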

\begin{proof}
We have shown that given a recursive positive-dimensional model of $T$, there is a recursive presentation of every countable model of $T$. These are the only spectra consistent with this.
\end{proof}

\bibliographystyle{amsplain}

\end{document}